\newtheorem{theorem}{Theorem}[section]
\newtheorem{lemma}[theorem]{Lemma}
\newtheorem{proposition}[theorem]{Proposition}
\theoremstyle{definition}
\theoremstyle{remark}
\newtheorem{remark}[theorem]{Remark}
\def\XXint#1#2#3{{
\setbox0=\hbox{$#1{#2#3}{\int}$}
\vcenter{\hbox{$#2#3$}}\kern-.5\wd0}}
\newcommand{\cc}[1]{C_{#1}}
\newcommand{\allrip}[1]{\mathcal{C}_{#1}}
\newcommand{\tallrip}[1]{\tilde{\mathcal{C}}_{#1}}
\newcommand{\mneta}{\mathcal{M}_{N,\eta}}
\newcommand{\eucball}{\mathscr{B}(y,\epsilon)}
\newcommand{\vne}{\mathcal{G}_{N,\epsilon}(y)}
\newcommand{\sve}[1]{S_{N-j}^\circ({#1})}
\newcommand{\bc}{\mathfrak{B}_{\mathcal{C}}}
\newcommand{\bstarc}{\mathfrak{B}^*_{\mathcal{C}}}
\newcommand{\tbc}{\tilde{\mathfrak{B}}_{\mathcal{C}}}
\newcommand{\indentitem}{\setlength\itemindent{12pt}}
\newcommand{\tds}{\tilde{D}_s}
\newcommand*\samethanks[1][\value{footnote}]{\footnotemark[#1]}
\begin{document}

\title{Entropic repulsion of Gaussian free field on high-dimensional Sierpinski carpet graphs}

\author{
Joe P. Chen\thanks{
Research partially supported by a visiting scholarship from the Hausdorff Research Institute for Mathematics.} \thanks{
Research partially supported by US NSF grant DMS-1156350.} \\ \small{University of Connecticut}
\and
Baris Evren Ugurcan\samethanks \\ \small{University of Western Ontario} 
}

\maketitle

\renewcommand{\theequation}{\thesection.\arabic{equation}}
\numberwithin{equation}{section}

\begin{abstract}
Consider the free field on a fractal graph based on a high-dimensional Sierpinski carpet
(\emph{e.g.} the Menger sponge), that is, a centered Gaussian field whose covariance is the Green's function for simple random walk on the graph.
Moreover assume that a ``hard wall" is imposed at height zero so that the field stays positive everywhere.
We prove the leading-order asymptotics for the local sample mean of the free field above the hard wall on any transient
Sierpinski carpet graph, thereby extending a result of Bolthausen, Deuschel, and Zeitouni for the free field on
$\mathbb{Z}^d$, $d \geq 3$, to the fractal setting. 

\noindent \emph{Keywords:} Gaussian free field, random surfaces, fractals, Dirichlet forms, Mosco convergence.

\noindent \emph{2010 MSC subject classification:} 31C15, 60G60, 82B41.
\end{abstract}

\maketitle

\section{Introduction} \label{sec:intro}

Let $\mathcal{G}=(V,E)$ be a connected graph of bounded degree, which contains a distinguished vertex $x_0 \in V$. Let $(Y_k)_{k \in \mathbb{N}_0}$ be a discrete-time symmetric random walk on $\mathcal{G}$, with transition probability
$$P(x,y) = \mathbb{P}^x[Y_{k+1}=y | Y_k=x]=\left\{\begin{array}{ll}1/\deg(x),& \text{if}~\langle xy\rangle \in E,\\ 0,& \text{if}~\langle xy \rangle \notin E.\end{array}\right.$$
As usual, $\mathbb{P}^x$ denotes the law of the random walk started from $x$, and $\mathbb{E}^x$ denotes the corresponding expectation. Then we define the Green's function for the random walk killed at $x_0$ by
\begin{equation}
G_{\mathcal{G}}(x,y) = \frac{1}{\deg(y)} \mathbb{E}^x \left[ \sum_{k=0}^{\tau_{x_0}-1} \mathbbm{1}_{\{Y_k=y\}}\right] \quad (x,y \in V),
\label{eq:Greenfunctiondef}
\end{equation}
where $\tau_{x_0} = \inf\{k \geq 0: Y_k=x_0\}$ is the first hitting time of  $x_0$. 

A \emph{Gaussian free field (GFF)} on $\mathcal{G}$ is a collection of Gaussian random variables $\varphi=\{\varphi_x\}_{x\in V}$ with mean zero and covariance given by the Green's function $G_{\mathcal{G}}(x,y)$ in (\ref{eq:Greenfunctiondef}). Formally, the law of $\varphi$ has density proportional to $\exp\left(-\frac{1}{2}E_{\mathcal{G}}(\cdot)\right)$ with respect to the Lebesgue measure on $\mathbb{R}^V$, where
$$E_{\mathcal{G}}(f) = \frac{1}{2}\sum_{\substack{x,y\in V\\ \langle x y \rangle \in E}}[f(x)-f(y)]^2\quad (f : V\to\mathbb{R}~\text{with}~f(x_0)=0)$$
is the Dirichlet energy on $\mathcal{G}$.

From the perspective of statistical mechanics, it is helpful to view the free field $\varphi$ as a \emph{random height function} on $V$, or a \emph{random surface} embedded in $V\times \mathbb{R}$. We would like to study the stochastic geometry of the free field on an infinite graph, under the condition that $\varphi$ is nonnegative everywhere: that is, nowhere can $\varphi$ penetrate downward through a ``hard wall'' imposed at zero height. The phenomenon of \emph{entropic repulsion} then refers to the event that the random field moves away from the hard wall in order to gain space for local fluctuations, and especially to accommodate the largest downward spikes.

Entropic repulsion has been demonstrated for a large class of Gaussian random fields on $\mathbb{Z}^d$, which we
now recap briefly. Let $\varphi$ be the centered Gaussian field on $\mathbb{Z}^d$ with covariance $(-\Delta)^{-p}$,
where $\Delta$ is the probabilistic Laplacian on $\ell^2(\mathbb{Z}^d)$ and $p\in \mathbb{N}$, and denote the law
of this random field by $\mathbb{P}$. The $p=1$ case corresponds to the free field, while the $p=2$ case is
called the Laplacian membrane model \cite{Sakagawa, Kurt}, as the random surface favors minimization of
curvature rather than gradient. Furthermore, for each $L\in 2\mathbb{N}$, let $\Lambda_L =
[-\frac{L}{2},\frac{L}{2}]^d \cap \mathbb{Z}^d$, and $\Omega^+_{\Lambda_L}$ be the event $\{\varphi_x \geq 0
~\text{for all}~ x \in \Lambda_L\}$. Then for any $d>2p$, it was proved in \cite{BDZ95,Sakagawa,Kurt} that
\begin{equation}
\lim_{L\to\infty} \frac{\log \mathbb{P}(\Omega^+_{\Lambda_L})}{L^{d-2p} \log L} = -2p \cdot G_{\mathbb{Z}^d}(0,0)
\cdot {\rm Cap}_p([-1,1]^d).
\label{eq:LDZd}
\end{equation}
Here $G_{\mathbb{Z}^d}(x,y) = \langle \mathbbm{1}_{\{x\}},
(-\Delta)^{-p}\mathbbm{1}_{\{y\}}\rangle_{\ell^2(\mathbb{Z}^d)}$ is the Green's function relative to
$(-\Delta)^p$, and ${\rm Cap}_p(K)$ is the capacity of the set $K \subset \mathbb{R}^d$ relative to the $p$-Laplacian energy
on $L^2(\mathbb{R}^d)$,
$$\mathcal{E}_p(f) = \frac{1}{(2d)^p} \int_{\mathbb{R}^d} |(\nabla^p f)(x)|^2\, dx,$$
that is: ${\rm Cap}_p(K) = \inf\left\{\mathcal{E}_p(f): f\in H^p(\mathbb{R}^d),~f\geq 1~\text{on}~ K
\right\}$, where $H^p(\mathbb{R}^d)$ is the Sobolev space
$$\left\{f \in L^2(\mathbb{R}^d): D^\alpha f \in L^2(\mathbb{R}^d)~\text{for each multi-index}~\alpha~\text{with}~|\alpha|\leq p\right\},$$
and $D^\alpha$ stands for the weak partial derivative $D_{x_1}^{\alpha_1} \cdots D_{x_d}^{\alpha_d}$ when $\alpha=(\alpha_1, \cdots, \alpha_d)$.

Mathematically, to impose the hard wall condition means to condition on the event $\Omega^+_{\Lambda_L}$. We shall be interested in the local
sample mean of the field under $\mathbb{P}(\cdot|\Omega^+_{\Lambda_L})$ as $L\to\infty$. For each $L \in
2\mathbb{N}$, $\epsilon>0$, and $x\in \Lambda_L$, let $\Lambda_{L,\epsilon}(x) = \{z \in \Lambda_L: \max_{1\leq i
\leq d} |z_i-x_i| \leq \epsilon L\}$ be the $\epsilon L$-cubic neighborhood of $x$, and $\bar\varphi_{L,\epsilon}(x)
= |\Lambda_{L,\epsilon}(x)|^{-1} \sum_{z\in \Lambda_{L,\epsilon}(x)}\varphi_z$ be the average of $\varphi$ over
$\Lambda_{L,\epsilon}(x)$. Then in the case $d>2p$, one has that for any $\epsilon>0$ and $\eta>0$,
\begin{equation}
\lim_{L\to\infty} \sup_{\substack{x \in \Lambda_L\\ \Lambda_{L,\epsilon}(x) \subset \Lambda_L}} \mathbb{P}\left(
\left|\frac{\bar\varphi_{L,\epsilon}(x)}{\sqrt{\log L}} - \sqrt{4p \cdot G_{\mathbb{Z}^d}(0,0)} \right| \geq \eta ~\bigg|~
\Omega^+_{\Lambda_L}\right)=0,
\label{eq:heightZd}
\end{equation}
In other words, the local sample mean of the free field is pushed to a height proportional to $\sqrt{\log L}$ above the
hard wall, and the rescaled height converges in probability to a constant function. Equation (\ref{eq:heightZd}) was proved by Sakagawa
\cite{Sakagawa} and Kurt \cite{Kurt} using (\ref{eq:LDZd}) and a series of coarse graining and conditioning
arguments.

For the free field on $\mathbb{Z}^d$, $d\geq 3$, Bolthausen, Deuschel and Zeitouni proved a 
pointwise result \cite{BDZ95}: for any $\epsilon>0$ and $\eta >0$, 
\begin{equation}
\lim_{L\to\infty} \sup_{x \in \Lambda_{L,\epsilon}} \mathbb{P}\left( \left|\frac{\varphi_x}{\sqrt{\log L}} - \sqrt{4 \cdot
G_{\mathbb{Z}^d}(0,0)} \right| \geq \eta ~\bigg|~ \Omega^+_{\Lambda_L}\right)=0,
\end{equation}
where $\Lambda_{L,\epsilon} := \{x \in \Lambda_L: {\rm dist}(x,(\Lambda_L)^c)\geq \epsilon L\}$.

The goal of this paper is to demonstrate entropic repulsion of the free field on a class of infinite fractal graphs, which do not enjoy the same translational invariance as $\mathbb{Z}^d$ does. For concreteness, we will work with graphs associated with high-dimensional \emph{generalized Sierpinski carpets} (\emph{e.g.} the Menger sponge, but \emph{not} the standard 2-dimensional Sierpinski carpet), and obtain the fractal counterparts to Equations (\ref{eq:LDZd}) and (\ref{eq:heightZd}). As a secondary goal, our work may help identify which aspects of the arguments used on $\mathbb{Z}^d$ remain robust in the non-$\mathbb{Z}^d$ setting. There are two main ingredients which enable this generalization:
\begin{itemize}
\item \emph{Spectral convergence} of discrete graphs to a limit object: In the $\mathbb{Z}^d$ setting, the notion already exists that a sequence of suitably scaled discrete Laplacians (on $\mathbb{Z}^d$) converges to the continuum Laplacian (on $\mathbb{R}^d$), \emph{viz.} Brownian motion on $\mathbb{R}^d$ as a scaling limit of simple random walks on $\mathbb{Z}^d$. This convergence is in the \emph{strong operator} topology and hence in the \emph{strong resolvent} topology as well, the latter of which is crucial for the study of the free field. In the case of Sierpinski carpets, we cannot verify strong operator convergence directly, so instead we will show the \emph{Mosco convergence} of the corresponding discrete Dirichlet forms, an equivalent condition to strong resolvent convergence.

\item \emph{Regularity} in the graph structure: As our graphs retain a regular block cell structure, most of the \emph{conditioning} and \emph{coarse graining} arguments employed on $\mathbb{Z}^d$ can be adapted to our setting after some modifications. Translational invariance is not a must.
\end{itemize}

Let us now describe the basics of the Sierpinski carpet, before presenting our main theorems.

\emph{Notation.} Thoughout the article, $c$, $C$ and $C'$ denote positive constants which may change from line to line. The variables upon which the constant depends will be indicated in parentheses (\emph{e.g.} $C(O_1, O_2)$). Specific constants will be indicated with a numeral subscript (\emph{e.g.} $\cc{1.1}$). Meanwhile, $C_c(X)$ denotes the space of continuous functions on $X$ with compact support.

\subsection{Generalized Sierpinski carpets and associated graphs}

\begin{table}
\centering
\renewcommand{\arraystretch}{1.1} 
\begin{tabular}{ccc}
$\mathbb{Z}^d$, $d\geq 3$ & Infinite graph & Transient GSC graph\\ \hline
$[-\frac{L}{2},\frac{L}{2}]^d\cap \mathbb{Z}^d$ & Approximating subgraph (``box'') & $\mathcal{G}_N=(V_N,\sim)$\\
$L$ & Side length of box & $\ell_F^N$ \\
$\asymp L^d$ & Volume of box & $\asymp m_F^N$ \\
$\asymp L^2$ & Expected escape time of random walk from box & $\asymp t_F^N$ \\
$\asymp L^{2-d} \in (0,1)$ & Resistance across opposite faces of box & $\asymp \rho_F^N = (t_F/m_F)^N \in (0,1)$
\end{tabular}
\caption{Comparison of relevant parameters on $\mathbb{Z}^d$ and on the Sierpinski carpet graph.}
\label{tab:params}
\end{table}

\subsubsection{Construction of the fractal} \label{sec:fractal}

Let $F_0:=[0,1]^d$ be the unit cube in $\mathbb{R}^d$, $d\geq 2$, and fix an $\ell_F \in \mathbb{N}$, $\ell_F
\geq 3$. For $N\in \mathbb{Z}$, let $\mathcal{Q}_N$ be the collection of closed cubes of side $\ell_F^{-N}$ with
vertices in $\ell_F^{-N} \mathbb{Z}^d$. For $A \subset \mathbb{R}^d$, let $\mathcal{Q}_N(A) = \{Q \in
\mathcal{Q}_N: {\rm int}(Q)\cap A \neq \emptyset\}$. Denote by $\Psi_Q$ the orientation-preserving affine map
which maps $F_0$ to $Q \in \mathcal{Q}_N$.

We now introduce a decreasing sequence $(F_N)_N$ of closed subsets of $F_0$ as follows. Fix $m_F \in \mathbb{N}$,
$1\leq m_F < \ell_F^d$, and let $F_1$ be the union of $m_F$ distinct elements of $\mathcal{Q}_1(F_0)$. Then by
induction we put
$$F_{N+1} = \bigcup_{Q\in \mathcal{Q}_N(F_N)} \Psi_Q(F_1) = \bigcup_{Q \in \mathcal{Q}_1(F_1)}\Psi_Q(F_N)~,~N\geq
1.$$
It is a standard argument to show that $F= \bigcap_{N=0}^\infty F_N$ is the unique fixed point of the iterated
function system of contractions $\{\Psi_Q\}_{Q\in \mathcal{Q}_1(F_1)}$. Moreover, $F$ has Hausdorff dimension
$d_h(F)=\log m_F/\log \ell_F$.

We say that $F$ is a \emph{generalized Sierpinski carpet (GSC)} if and only if $F_1$ satisfies the following four
conditions:
\begin{enumerate}[label={(GSC\arabic*)},nolistsep]
{\indentitem \item (Symmetry) $F_1$ is preserved under the isometries of the unit cube. \label{cond:GSC1}}
{\indentitem \item (Connectedness) $F_1$ is connected. \label{cond:GSC2}}
{\indentitem \item (Non-diagonality) Let $m\geq 1$ and $B\subset F_0$ be a cube of side length $2\ell_F^{-m}$,
which is the union of $2^d$ distinct elements of $\mathcal{Q}_m$. Then if ${\rm int}(F_1\cap B)$ is non-empty, it
is connected. \label{cond:GSC3} }
{\indentitem \item (Borders included) $F_1$ contains the segment $\{(x_1,0,\cdots,0) \in \mathbb{R}^d: x_1\in
[0,1]\}$. \label{cond:GSC4}}
\end{enumerate}
For equivalent ways of stating the non-diagonality condition \ref{cond:GSC3}, see \cite[\S 2]{KajinoND}. Throughout
the article, we shall refer to $\ell_F$ and $m_F$ as, respectively, the \emph{length scale factor} and the
\emph{mass scale factor} of the carpet $F$.

The stochastic analysis on the Sierpinski carpet is built upon the measure space $(F,\nu)$, where $\nu$ is the self-similar Borel probability measure which assigns mass $m_F^{-N}$ to each $\Psi_Q(F)$, $Q\in \mathcal{Q}_N(F_N)$. Note that $\nu$ is a constant multiple of the $d_h(F)$-dimensional Hausdorff measure on $F$. We will also consider the \emph{unbounded carpet} $F_\infty := \bigcup_{N=0}^\infty\ell_F^N F$, and let $\nu_\infty$ be the $\sigma$-finite self-similar Borel probability measure on $F_\infty$, assigning mass $m_F^N$ to $\ell_F^N F$.

We introduce two other important scale factors associated with Sierpinski carpets. Let $D_N$ be the network of diagonal crosswires obtained by connecting each vertex of a cube $Q\in \mathcal{Q}_N(F_N)$
to the vertex at the center of the cube via a wire of unit resistance. Denote by $\mathcal{R}_{D_N}$ the
resistance across two opposite faces of $D_N$. It was shown in \cite{BB90Resistance,McGillivray} that there
exist $\rho_F \in (0,\infty)$ and positive constants $C(d)$ and $C'(d)$
such that
$$ C \rho_F^N \leq \mathcal{R}_{D_N} \leq C'\rho_F^N.$$
The constant $\rho_F$ is henceforth referred to as the \emph{resistance scale factor} of the carpet $F$. As of
this writing, there is no known exact formula for $\rho_F$: the best estimate, obtained via a resistance shorting
and a cutting argument, is \cite[Proposition 5.1]{BB99}
$$\ell_F^2/m_F \leq  \rho_F \leq 2^{1-d} \ell_F.$$

Next, let $t_F = m_F \rho_F$, which stands for the \emph{time scale factor} of the carpet $F$. The significance of
$t_F$ is due to the fact that the expected
time for a $d$-dimensional Brownian motion to traverse from one face of $\ell_F^N F_N$ to the opposite face scales with $t_F^N$ \cite{BB89, BB90Resistance, BB92, BB99}.

It is often convenient to introduce, respectively, the \emph{Hausdorff}, \emph{walk}, and \emph{spectral} dimensions of $F$:
$$ d_h(F) = \frac{\log m_F}{\log \ell_F},\quad d_w(F) = \frac{\log t_F}{\log \ell_F},\quad d_s(F)= 2\frac{\log
m_F}{\log t_F}.$$
Under the strict inequality $m_F < \ell_F^d$, one has $1\leq d_s(F) < d_h(F) < d$ and $d_w(F)>2$. The latter
inequality implies that diffusion on $F$ (resp. $F_\infty$) is \emph{sub-Gaussian}, in contrast with Gaussian
diffusion which has walk dimension $2$.

\subsubsection{Sierpinski carpet graphs} \label{sec:SCGraph}

\begin{figure}
\centering
\subfigure[]{
\includegraphics{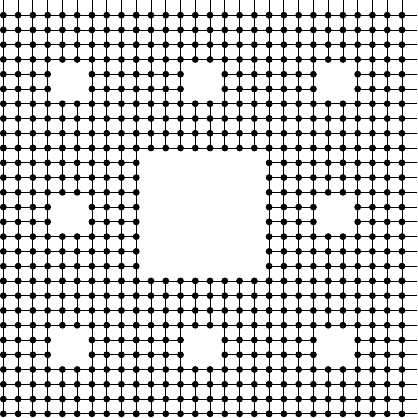}
}
\subfigure[]{
\includegraphics{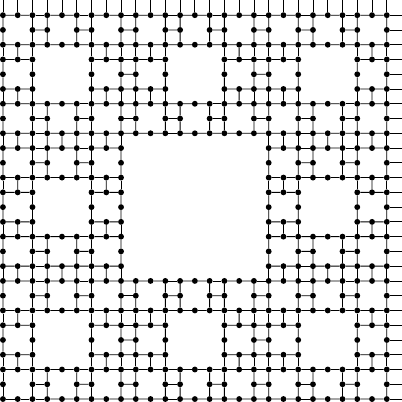}
}
\caption{The 3rd-level approximation of, respectively, the outer Sierpinski carpet graph $\mathcal{G}_\infty$ and the inner Sierpinski carpet graph $\mathcal{I}_\infty$, here shown for the standard 2-dimensional Sierpinski carpet. According to the conventions in the text, when embedded in $(\mathbb{R}_+)^d$, the least vertex of $\mathcal{G}_\infty$ is situated at the origin, while the least vertex of $\mathcal{I}_\infty$ is situated at $(\frac{1}{2},\cdots,\frac{1}{2})$. All edges have Euclidean distance $1$.}
\label{fig:SCGraph}
\end{figure}

For each generalized Sierpinski carpet $F$, we consider two associated graphs. See Figure \ref{fig:SCGraph}.

Let $V_N=\ell_F^N F_N \cap \mathbb{Z}^d$. We introduce the finite graph $\mathcal{G}_N =(V_N,\sim)$, where throughout the paper, the edge relation ``$\sim$'' means that two vertices
$x,x'$ are connected by an edge if and only if their Euclidean distance $\|x-x'\|=1$. Then we put $\mathcal{G}_\infty = (V_\infty,\sim)$ where $V_\infty=\bigcup_{N \in
\mathbb{N}} V_N$, and refer to this as the \emph{outer} Sierpinski carpet graph. Observe that
$\mathcal{G}_\infty$ is a subgraph of $(\mathbb{Z}_+)^d$. In this paper we will study the Gaussian
free field on $\mathcal{G}_\infty$.

Next, let $I_N = \ell_F^N F_N \cap \left(\mathbb{Z}^d +
\left(\frac{1}{2},\frac{1}{2},\cdots,\frac{1}{2}\right)\right)$. We introduce the finite graph $\mathcal{I}_N=(I_N,\sim)$. Then $\mathcal{I}_\infty= (I_\infty, \sim)$, where $I_\infty =
\bigcup_{N\in\mathbb{N}} I_N$, is what we call the \emph{inner} Sierpinski carpet graph. It is easy to see that $|I_N|=m_F^N$, and that there exist constants $\cc{1.1}$ and $\cc{1.2}$, independent of $N$, such that $\cc{1.1} m_F^N \leq |V_N| \leq \cc{1.2} m_F^N$.

For easy reference, we provide  in Table \ref{tab:params} a side-by-side comparison of the relevant parameters on $\mathbb{Z}^d$ and on the Sierpinski carpet graph ($\mathcal{G}_\infty$ or $\mathcal{I}_\infty$). It is known that simple random walk on the latter is transient if and only if $\rho_F<1$ \cite{BBSCGraph, McGillivray}. 

\subsubsection{Dirichlet forms}

Consider the quadratic form
\begin{equation}
\mathcal{E}_{\mathbb{R}^d}(f_1,f_2) = \int_{\mathbb{R}^d} (\nabla f_1)(x) \cdot (\nabla f_2)(x) dx,\quad \forall f_1, f_2 \in H^1(\mathbb{R}^d).
\label{eq:DFRd}
\end{equation}
In the language of potential theory \cite{FOT,ChenFukushima}, $(\mathcal{E}_{\mathbb{R}^d},H^1(\mathbb{R}^d))$ is a strongly local, regular, non-zero, conservative Dirichlet form on $L^2(\mathbb{R}^d)$, and hence can be associated to a Hunt process (diffusion) on $\mathbb{R}^d$, which is nothing but the canonical Brownian motion.

In the same spirit, we shall refer to a \emph{Brownian motion} on a Sierpinski carpet $F$ as a Hunt process associated with a local, regular, non-zero, conservative Dirichlet form on $L^2(F,\nu)$ which is invariant with respect to all the \emph{local symmetries} of $F$. (See \cite[Definition 2.15]{BBKT} for the precise definition of this last condition.) Denote by $\mathfrak{E}_c$ the totality of all such Dirichlet forms on $L^2(F,\nu)$. 

It is straightforward to use elements $(\mathcal{E}_c, \mathcal{F}_c)$ of $\mathfrak{E}_c$ to construct the corresponding Dirichlet forms (or Brownian motions) on the infinite carpet $F_\infty$  (\emph{cf.} \cite[\S 4]{KumagaiBesov}). Let $\tilde{F}_N=\ell_F^N F$, and define $\sigma_N : \{\text{functions on}~\tilde{F}_N\} \to \{\text{functions on}~F\}$ by $(\sigma_N f)(x) = f(\ell_F^N x)$. Put $\tilde{\mathcal{F}}_N = \sigma_{-N}\mathcal{F}_c$ and $\tilde{\mathcal{E}}_N(f,g) = \rho_F^{-N} \mathcal{E}_c(\sigma_N f, \sigma_N g)$ for all $f,g \in \tilde{\mathcal{F}}_N$. It is verified in \cite[(4.7)]{KumagaiBesov} that $\tilde{\mathcal{E}}_{N-1}(\left. f\right|_{\tilde{F}_{N-1}}, \left. f\right|_{\tilde{F}_{N-1}} ) \leq \tilde{\mathcal{E}}_N(f,f)$ for all $f \in \tilde{\mathcal{F}}_N$, which implies that the limit $\lim_{N\to\infty} \tilde{\mathcal{E}}_N\left(\left. f\right|_{\tilde{F}_N}, \left.f\right|_{\tilde{F}_N}\right)$ exists in $\mathbb{R} \cup \{+\infty\}$. Define
\begin{eqnarray*}
\mathcal{D}_\infty &=& \{f\in C_c(F_\infty) : \left.f\right|_{\tilde{F}_N} \in \tilde{\mathcal{F}}_N~\forall N\in \mathbb{N}, ~\lim_{N\to\infty}\tilde{\mathcal{E}}_N(\left.f\right|_{\tilde{F}_N}, \left.f\right|_{\tilde{F}_N})<\infty\}.\\
\mathcal{E}_\infty(f,g) &=& \lim_{N\to\infty} \tilde{\mathcal{E}}_N(\left.f\right|_{\tilde{F}_N}, \left.f\right|_{\tilde{F}_N}),\quad \forall f,g \in \mathcal{D}_\infty.
\end{eqnarray*}
Then $(\mathcal{E}_\infty, \mathcal{D}_\infty)$ is a closable Markovian form on $L^2(F_\infty,\nu_\infty)$, and has a smallest closed extension $(\mathcal{E}_\infty, \mathcal{F}_\infty)$, which is local, regular, non-zero, conservative, and invariant with respect to all the local symmetries. We denote the totality of all such Dirichlet forms on $L^2(F_\infty,\nu_\infty)$ by $\mathfrak{E}$.

We should mention that the construction of a Brownian motion on a Sierpinski carpet has a long and rich history. In \cite{BB89,BB92,BB99}, Barlow and Bass studied reflecting Brownian motions $W^N_t$ on the sequence of approximating carpets $F_N$, and showed that the laws of the sped-up Brownian motions $(X^N_t)_N = (W^N_{a_N t})_N$ are tight, where the numbers $a_N$ satisfy $c (t_F/\ell_F^2)^N \leq a_N \leq C (t_F/\ell_F^2)^N$. Thus $(X^N_t)_N$ admits subsequential limits, which Barlow and Bass referred to as ``Brownian motions'' on the Sierpinski carpet $F= \bigcap_N F_N$. (Brownian motions on the infinite carpet $F_\infty$ are obtained similarly.) In \cite{KusuokaZhou}, Kusuoka and Zhou provided a different construction based on Dirichlet forms on approximating graphs $\mathcal{I}_N$ (see \S \ref{sec:KZ} for a brief discussion). For nearly two decades it was unclear whether the two approaches gave rise to the same \emph{unique} Brownian motion. A recent seminal work \cite{BBKT} showed that up to constant multiples, $\mathfrak{E}_c$ contains only one element. In particular, the Dirichlet forms associated with the Barlow-Bass construction and the Kusuoka-Zhou construction both belong to $\mathfrak{E}_c$, and therefore the two diffusions are the same up to deterministic time change.

In light of this uniqueness result, all of our ensuing statements which contain the phrase ``there exists $\mathcal{E}\in \mathfrak{E}$'' or ``for some $\mathcal{E} \in \mathfrak{E}$'' may be replaced by ``for any $\mathcal{E} \in \mathfrak{E}$'' without trouble.

\subsection{Main results}

In what follows, $F$ is a generalized Sierpinski carpet which supports \emph{transient} Brownian motion, \emph{i.e.,} $\rho_F<1$, or equivalently, $d_s(F)>2$. This includes the standard $d$-dimensional carpet for $d\geq 3$ (where $F_1$ is obtained by removing a center cube of side $1/3$); the standard $3$-dimensional Menger sponge (where $F_1$ is obtained by removing a cube of side $1/3$ centered on each of the 6 faces of the unit cube, plus the cube of side $1/3$ situated at the body center); and many, \emph{but not all}, carpets in $\mathbb{R}^d$ with $d\geq 3$. Our analysis does \emph{not} apply to any generalized Sierpinski carpet in $\mathbb{R}^2$, whereby $\rho_F>1$ and is hence recurrent.

Let $G_{\mathcal{G}_\infty}: V_\infty \times V_\infty \to \mathbb{R}$ be the Green's function for simple random walk on the outer Sierpinski carpet graph $\mathcal{G}_\infty$ without killing. From basic Markov chain theory, a vertex $x$ is recurrent with respect to the random walk if and only if the expected number of returns to $x$ is infinite:
$$
\mathbb{E}^x\left[\sum_{k=1}^\infty \mathbbm{1}_{\{Y_k=x\}}\right]=\infty.
$$
By this result, if $x$ is transient, then the Green's function $G_{\mathcal{G}_\infty}(x,x)$ is finite:
$$
G_{\mathcal{G}_\infty}(x,x) = \frac{1}{\deg(x)}\mathbb{E}^x\left[\sum_{k=0}^\infty \mathbbm{1}_{\{Y_k=x\}}\right] = \frac{1}{\deg(x)}\left(1+\mathbb{E}^x\left[\sum_{k=1}^\infty  \mathbbm{1}_{\{Y_k=x\}}\right]\right) <\infty.
$$
Moreover, the random walk on $V_\infty$ is irreducible, so $G_{\mathcal{G}_\infty}(x,x) <\infty$ for all $x\in V_\infty$. Let us denote $\overline{G}:=\sup_{x\in V_\infty}G_{\mathcal{G}_\infty}(x,x)$ and $\underline{G} :=
\inf_{x\in V_\infty}G_{\mathcal{G}_\infty}(x,x)$, which are both positive and finite.

We also need the notion of the ($0$-order) capacity of the compact carpet $F$ with respect to a Dirichlet form
$(\mathcal{E},\mathcal{F})\in \mathfrak{E}$ on $L^2(F_\infty, \nu_\infty)$, given by
$$ {\rm Cap}_{\mathcal{E}}(F):= \inf\{\mathcal{E}(f,f): f\in \mathcal{F}\cap C_c(F_\infty), f\geq 1 \text{~a.e.
on}~ F\},$$
See Proposition \ref{prop:capacity} for a more general definition of the capacity, as well as some important
properties.

Let $\mathbb{P}$ be the law of the Gaussian free field on $\mathcal{G}_\infty$ with covariance
$G_{\mathcal{G}_\infty}$, and let $\Omega_{V_N}^+$ denote the entropic repulsion event $\{\varphi_x \geq 0 ~\text{for all}~
x \in V_N\}$. Our first main result identifies the rate of exponential decay for $\mathbb{P}(\Omega_{V_N}^+)$.

\begin{theorem}
There exists a point $x_0 \in V_\infty$ such that for any Dirichlet form $(\mathcal{E},\mathcal{F}) \in
\mathfrak{E}$, there are positive constants $\cc{1.3}$ and $\cc{1.4}$ such that
\begin{equation}
-\cc{1.3}\cdot\overline{G} \cdot{\rm Cap}_{\mathcal{E}}(F) \leq \varliminf_{N\to\infty}
\frac{\log\mathbb{P}(\Omega_{V_N}^+)}{\rho_F^{-N} \log(t_F^N)} \leq \varlimsup_{N\to\infty}
\frac{\log\mathbb{P}(\Omega_{V_N}^+)}{\rho_F^{-N} \log(t_F^N)} \leq -\cc{1.4} \cdot
G_{\mathcal{G}_\infty}(x_0,x_0) \cdot{\rm Cap}_{\mathcal{E}}(F).
\end{equation}
\label{thm:LD}
\end{theorem}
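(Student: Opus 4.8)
We prove the two inequalities by independent arguments; the fractal geometry enters only through the sub-Gaussian heat-kernel and resistance estimates for simple random walk on $\mathcal{G}_\infty$ and through the Mosco convergence of the rescaled discrete Dirichlet forms to a member of $\mathfrak{E}_c$, the latter allowing us to replace discrete capacities of cells by multiples of $\mathrm{Cap}_{\mathcal{E}}(F)$. For the \emph{lower bound} $\varliminf \ge -\cc{1.3}\,\overline{G}\,\mathrm{Cap}_{\mathcal{E}}(F)$ I would run a change-of-measure argument. Recall that the Cameron--Martin space of the free field on $\mathcal{G}_\infty$ is the extended Dirichlet space $\{f : E_{\mathcal{G}_\infty}(f) < \infty\}$ with inner product $E_{\mathcal{G}_\infty}(\cdot,\cdot)$, and that a deterministic shift $\psi$ in that space produces the law $\mathbb{Q}_\psi$ of $\varphi+\psi$ with $H(\mathbb{Q}_\psi\mid\mathbb{P}) = \tfrac12 E_{\mathcal{G}_\infty}(\psi)$. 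Take $\psi_N = a_N e_N$, where $e_N$ is the equilibrium potential of $V_N$ in $\mathcal{G}_\infty$ (so $e_N\equiv 1$ on $V_N$, $0\le e_N\le 1$, $E_{\mathcal{G}_\infty}(e_N) = \mathrm{Cap}_{\mathcal{G}_\infty}(V_N) := \inf\{E_{\mathcal{G}_\infty}(f) : f\ge 1 \text{ on } V_N\}$), and $a_N = \mathbb{E}[\max_{V_N}\varphi] + \sqrt{2\overline{G}\log 2}$. Since $\mathrm{Var}(\varphi_x)\le\overline{G}$ for every $x$, a standard Gaussian maximal inequality gives $\mathbb{E}[\max_{V_N}\varphi]\le\sqrt{2\overline{G}\log|V_N|}$, and the Borell--TIS inequality gives $\mathbb{Q}_{\psi_N}(\Omega_{V_N}^+) = \mathbb{P}(\min_{V_N}\varphi\ge -a_N) = \mathbb{P}(\max_{V_N}\varphi\le a_N)\ge\tfrac12$. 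The entropy inequality $\mathbb{P}(A)\ge\mathbb{Q}(A)\exp(-(H(\mathbb{Q}\mid\mathbb{P})+e^{-1})/\mathbb{Q}(A))$ then yields $-\log\mathbb{P}(\Omega_{V_N}^+)\le a_N^2\,\mathrm{Cap}_{\mathcal{G}_\infty}(V_N) + O(1)$. Using $|V_N|\asymp m_F^N$ (hence $\log|V_N| = (1+o(1))(\log m_F/\log t_F)\log t_F^N$) and the resistance scaling together with Mosco convergence (which give $\rho_F^N\,\mathrm{Cap}_{\mathcal{G}_\infty}(V_N)\le C\,\mathrm{Cap}_{\mathcal{E}}(F)$), one divides by $\rho_F^{-N}\log t_F^N$ to obtain the left-hand bound.

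For the \emph{upper bound} $\varlimsup \le -\cc{1.4}\,G_{\mathcal{G}_\infty}(x_0,x_0)\,\mathrm{Cap}_{\mathcal{E}}(F)$ I would use the identity $-\log\mathbb{P}(\Omega_{V_N}^+) = H(\mathbb{Q}_N\mid\mathbb{P})$ with $\mathbb{Q}_N := \mathbb{P}(\cdot\mid\Omega_{V_N}^+)$, together with the elementary fact that for centered Gaussian $\mathbb{P}$ and any $\mathbb{Q}\ll\mathbb{P}$ with barycenter $m$ one has $H(\mathbb{Q}\mid\mathbb{P})\ge\tfrac12 E_{\mathcal{G}_\infty}(m)$ (tilt $\mathbb{P}$ linearly toward $m$ in the Donsker--Varadhan variational formula). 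Writing $m_N := \mathbb{E}_{\mathbb{Q}_N}[\varphi]\ge 0$ on $V_N$, the problem reduces to showing that $m_N$ is \emph{macroscopically} large: there exist $c>0$ and, for each $N$, a subset $W_N\subseteq V_N$ with $\rho_F^N\,\mathrm{Cap}_{\mathcal{G}_\infty}(W_N)\ge c'\,\mathrm{Cap}_{\mathcal{E}}(F)$ (e.g.\ an $\epsilon$-interior portion of $V_N$ for fixed $\epsilon>0$, or a sub-cell) on which $m_N\ge c\sqrt{\log t_F^N}$; then $m_N/(c\sqrt{\log t_F^N})$ is admissible for $\mathrm{Cap}_{\mathcal{G}_\infty}(W_N)$, whence $E_{\mathcal{G}_\infty}(m_N)\ge c^2\log(t_F^N)\,\mathrm{Cap}_{\mathcal{G}_\infty}(W_N)\ge c''\,\rho_F^{-N}\log(t_F^N)\,\mathrm{Cap}_{\mathcal{E}}(F)$, which is the desired bound. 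The point $x_0$ and the value of $\cc{1.4}$ will be dictated by the on-diagonal Green's function of the cell around which positivity is enforced in the next step.

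The \emph{main obstacle} is establishing the macroscopic lower bound on $m_N$, which is the genuine entropic-repulsion input and is where the coarse-graining and conditioning scheme of Bolthausen--Deuschel--Zeitouni (and its refinements by Deuschel--Giacomin, Sakagawa, Kurt) must be transported to the carpet. I would tile $V_N$ by the $\asymp m_F^{N-k}$ level-$k$ cells for a well-chosen intermediate scale $k$ (with $1\ll k\ll N$), condition inside each cell $C$ on the field on $\mathcal{G}_\infty\setminus C$, use the Brascamp--Lieb inequality to keep the conditional variances $\le\overline{G}$ (so the internal fluctuations do not degenerate), and prove a one-cell dichotomy: conditionally on the exterior field, $\mathbb{P}(\varphi\ge 0\text{ on the core of }C\mid\text{exterior})$ is bounded away from $1$ unless the harmonic conditional mean on that core already exceeds a fixed multiple of $\sqrt{\log m_F^k}\asymp\sqrt{k}$, the threshold being the typical size of the minimum of the zero-exterior free field on $C$ (estimated by a second-moment argument from the sub-Gaussian heat-kernel bounds). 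Iterating across scales $k\to N$ and adding the energies of the successive shifts --- here the self-similar block structure of $F$ and the resistance scale factor $\rho_F$ perform the bookkeeping that translation invariance performs on $\mathbb{Z}^d$ --- pushes $m_N$ up to order $\sqrt{\log t_F^N}$ on a positive-capacity portion of $V_N$. Carrying this out without translation invariance, and with $\rho_F$ known only up to constants, is precisely why $\cc{1.3}$ and $\cc{1.4}$ (and $\overline{G}$ versus $G_{\mathcal{G}_\infty}(x_0,x_0)$) need not agree, so that on the carpet one obtains matching exponential orders rather than the sharp limit available on $\mathbb{Z}^d$.
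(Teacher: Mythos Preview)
Your lower bound is essentially the paper's argument: both shift the field by a deterministic function of height $\asymp\sqrt{N\log t_F}$, check that the shifted law makes $\Omega_{V_N}^+$ typical, and bound the relative entropy by a constant times $N\log t_F$ multiplied by the discrete capacity of $V_N$, which Lemma~\ref{lem:VNConv}(iii) converts into $\rho_F^{-N}\,\mathrm{Cap}_{\mathcal{E}}(F)$. The paper shifts by the \emph{constant} $\sqrt{\alpha N\log t_F}$ with $\alpha>2\overline{G}$ and uses a union bound where you use the equilibrium potential and Borell--TIS, but this is cosmetic.

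Your upper bound, however, takes a different route and has a real gap. The paper does \emph{not} pass through the barycenter $m_N=\mathbb{E}[\varphi\mid\Omega_{V_N}^+]$ or the inequality $H(\mathbb{Q}_N\mid\mathbb{P})\ge\tfrac12 E_{\mathcal{G}_\infty}(m_N)$. Instead it bounds $\mathbb{P}(\Omega_{V_N}^+)$ directly: condition on the field along a \emph{single} level-$k$ grid $\mathcal{D}_N$ (no iteration across scales), so that the values at the representative interior points $\mathcal{C}_N$ become conditionally independent Gaussians with means $\mu_x=\mathbb{E}[\varphi_x\mid\mathscr{F}_{\mathcal{D}_N}]$ and common variance $G_{\mathring{\mathcal{G}}_k}(x_0,x_0)$. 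Split on the event $\Gamma_{x_0}=\{$in some level-$(N-j)$ block, a $\delta$-fraction of the $\mu_x$ lie below $\sqrt{\alpha_{x_0,\kappa}N}\}$. On $\Gamma_{x_0}$, Lemma~\ref{lem:firsttermvanishes} gives a super-exponential bound, because many independent conditioned Gaussians each have a non-negligible chance to go negative. On $\Gamma_{x_0}^c$, the block averages of the $\mu_x$ are uniformly $\ge(1-\delta)\sqrt{\alpha_{x_0,\kappa}N}$, so a single Gaussian tail estimate for $\sum_\mathfrak{B} f_\mathfrak{B}\,|\mathfrak{B}_{\mathcal{C}}|^{-1}\sum_{x\in\mathfrak{B}_{\mathcal{C}}}\mu_x$ with arbitrary weights $f_\mathfrak{B}\ge0$ yields $\mathbb{P}(\Gamma_{x_0}^c)\le\exp\bigl(-cN(\sum f_\mathfrak{B})^2/\mathrm{Var}(\sum f_\mathfrak{B}\bar\varphi_\mathfrak{B})\bigr)$; optimising the $f_\mathfrak{B}$ and invoking the Green-form convergence of Lemma~\ref{lem:VNConv}(ii) produces $\mathrm{Cap}_{\mathcal{E}}(F)$ directly.

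The gap in your plan is the step ``$m_N\ge c\sqrt{\log t_F^N}$ pointwise on a set $W_N$ of capacity $\asymp\rho_F^{-N}$''. Your one-cell dichotomy controls the \emph{grid-conditional} means $\mu_x$, not the $\Omega_{V_N}^+$-conditional mean $m_N$, and transferring a lower bound from the former to a pointwise lower bound on the latter is not addressed. In fact it is stronger than what the paper ever proves: Theorem~\ref{thm:samplemeanheight} gives only the \emph{local-average} height, and its proof already consumes the lower bound of Theorem~\ref{thm:LD}, so building the upper bound of Theorem~\ref{thm:LD} on top of it would at best be circuitous. The ``iteration across scales'' you propose is unnecessary and seems to be an artefact of this detour; a single conditioning scale $k$ suffices once one argues as the paper does. (Minor point: the bound on the conditional variances is just domain monotonicity of the Green's function via the Markov property, not Brascamp--Lieb.)
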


The constants $\cc{1.3}$ and $\cc{1.4}$ are attributed to two sources: one coming from comparing the Dirichlet forms on $\mathcal{G}_\infty$ and on $\mathcal{I}_\infty$ (Lemma \ref{lem:Ecomp}), and
the other coming from comparing the (maximal or minimal) cluster point of the sequence of renormalized Dirichlet
forms on $\mathcal{I}_\infty$ with an element of $\mathfrak{E}$ (Theorem
\ref{thm:Greenform}). The constant $\cc{1.4}$ also depends on the constant $\cc{1.2}$ in the growth of the cardinality $|V_N| \leq \cc{1.2} m_F^N$. (For future reference on how the constants are related, $\cc{1.3} = \frac{1}{2} \cc{2.2}^{-1}\cc{2.8} d_s(F) $ and $\cc{1.4}=(\cc{1.2}^2 \cc{2.6})^{-1}$.) Due to the lack of precise control of the constants involved
in these comparisons, \emph{the authors deem it not possible to determine whether $\cc{1.3}$ equals $\cc{1.4}$ by the methods of this paper.}

Notwithstanding the small discrepancy between the lower and upper bounds in Theorem \ref{thm:LD}, we are still able
to give a precise description of entropic repulsion on $\mathcal{G}_\infty$. We shall prove that conditional on
$\Omega^+_{V_N}$, the local sample mean of the free field on $V_N$ is pushed to a height which is proportional to
$\sqrt{N}$, and as $N\to\infty$, the rescaled height converges in probability to a constant.

\begin{theorem}
For any $\epsilon>0$ and $\eta>0$,
\begin{equation}
\lim_{N\to\infty} \sup_{\substack{x\in V_N \\ V_{N,\epsilon}(x) \subset V_N}}
\mathbb{P}\left(\left|\frac{\bar\varphi_{N,\epsilon}(x)}{\sqrt{\log(t_F^N)}}-\sqrt{2\underline{G}}\right| \geq \eta~
\bigg| ~\Omega^+_{V_N}\right)=0,
\label{eq:samplemeanheight}
\end{equation}
where $\displaystyle \bar\varphi_{N,\epsilon}(x) := \frac{1}{|V_{N,\epsilon}(x)|}\sum_{z\in V_{N,\epsilon}(x)}
\varphi_z$ and $\displaystyle V_{N,\epsilon}(x) := \left\{z \in V_N: \max_{1\leq i\leq d}\left|z_i-x_i \right| \leq \epsilon\cdot \ell_F^N\right\}$.
\label{thm:samplemeanheight}
\end{theorem}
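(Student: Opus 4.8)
The plan is to follow the coarse-graining and conditioning scheme of Bolthausen--Deuschel--Zeitouni \cite{BDZ95} and Sakagawa \cite{Sakagawa}, adapting it to the block-cell structure of $\mathcal{G}_\infty$, with Theorem \ref{thm:LD} playing the role that \eqref{eq:LDZd} plays on $\mathbb{Z}^d$. The two bounds in \eqref{eq:samplemeanheight} are handled separately. For the \emph{upper bound} on the sample mean (\emph{i.e.}\ the field is not pushed much higher than $\sqrt{2\underline{G}\,\log(t_F^N)}$), I would argue by entropy: fix $a > \sqrt{2\underline{G}}$, and on the event that $\bar\varphi_{N,\epsilon}(x)$ exceeds $a\sqrt{\log(t_F^N)}$ on the neighborhood $V_{N,\epsilon}(x)$, shift the field down by $a\sqrt{\log(t_F^N)}$ on a slightly smaller sub-box. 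The Cameron--Martin/relative-entropy cost of this tilt is of order $a^2 \log(t_F^N)\cdot \rho_F^{-N}\,{\rm Cap}_{\mathcal{E}}(F)$ (up to the usual capacity-comparison constants), while the positivity constraint on the shifted field costs at most $\mathbb{P}(\Omega^+_{V_N})$, which by Theorem \ref{thm:LD} is $\exp(-c\,\rho_F^{-N}\log(t_F^N))$ with $c$ bounded above by a multiple of $\overline{G}\,{\rm Cap}_{\mathcal{E}}(F)$. Choosing $a$ large enough that the entropy gain beats the loss gives that this event is negligible under $\mathbb{P}(\cdot\mid \Omega^+_{V_N})$; a union bound over the $O(\epsilon^{-d_h})$-many translates $x$ is absorbed since the exponent grows like $\rho_F^{-N}\log(t_F^N)$.

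For the \emph{lower bound} (the field is indeed pushed \emph{up} to at least $\sqrt{2\underline{G}}\,\sqrt{\log(t_F^N)}$), the argument is the reverse conditioning estimate. One decomposes the field as $\varphi = \bar\varphi_{N,\epsilon}(x)\,h_N + (\varphi - \bar\varphi_{N,\epsilon}(x)\,h_N)$ where $h_N$ is (the discrete harmonic extension of) the indicator-type profile associated with the cell around $x$; conditioning on the value $s = \bar\varphi_{N,\epsilon}(x)$, the residual field is still Gaussian with a covariance comparable to $G_{\mathcal{G}_\infty}$, and the conditional probability of staying positive \emph{given} the level $s$ behaves, to leading exponential order, like $\exp\!\big(-(\text{something})\cdot(\sqrt{\log(t_F^N)}-s/\sqrt{2\underline{G}})_+^2\,\rho_F^{-N}\big)$ — the point being that if $s$ is too small, the downward Gaussian spikes of size $\asymp \sqrt{2\overline G\log(t_F^N)}$ (resp.\ $\sqrt{2\underline G\log(t_F^N)}$) force a large deviation cost that is strictly larger than the unconstrained cost $\log\mathbb{P}(\Omega^+_{V_N})$ from Theorem \ref{thm:LD}. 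Comparing the two exponential rates, using that $G_{\mathcal{G}_\infty}(x,x)\in[\underline{G},\overline{G}]$ and that the relevant single-site maximum of $N'$ i.i.d.-like Gaussians over a sub-box of $\asymp m_F^{N-j}$ sites is $\sqrt{2\,G_{\mathcal{G}_\infty}(x,x)\log(m_F^{N-j})}\asymp\sqrt{2\underline{G}\log(t_F^N)}$ (here $\log(m_F^{N})\asymp\log(t_F^N)$ since both are $\asymp N$), pins the height at $\sqrt{2\underline{G}\,\log(t_F^N)}$. The finite-range decomposition / Markov property of the free field on $\mathcal{G}_\infty$ across the block structure is what makes the conditioning local; this uses only the regular cell decomposition and not translation invariance.

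The two technical inputs I would isolate as lemmas are: (i) a \emph{FKG-type monotonicity and conditioning decomposition} for $\mathbb{P}(\cdot\mid\Omega^+_{V_N})$, letting one compare conditional means on sub-boxes and carry out the shift arguments cell by cell; and (ii) \emph{uniform Gaussian tail and Green's function estimates} on $\mathcal{G}_\infty$ — namely $\underline G\le G_{\mathcal{G}_\infty}(x,x)\le\overline G$ together with off-diagonal decay and the bound $\cc{1.1}m_F^N\le|V_N|\le\cc{1.2}m_F^N$ — which ensure that the maximum of the field over an $\epsilon$-cell is $(1+o(1))\sqrt{2\underline G\log(t_F^N)}$ and that the capacity-comparison constants entering Theorem \ref{thm:LD} do not destroy the strict inequality needed in the entropy trade-off.

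The main obstacle I anticipate is precisely the \emph{gap between $\cc{1.3}$ and $\cc{1.4}$ in Theorem \ref{thm:LD}}: on $\mathbb{Z}^d$ one has a \emph{sharp} large-deviation constant, and the coarse-graining argument for the height leans on matching the cost of the hard-wall event with the cost of the tilt exactly. Here one only has matching \emph{up to multiplicative constants}, so I would need the intermediate coarse-graining step to be run at a mesoscopic scale $j = j(N)$ with $1 \ll j \ll N$, where the relevant quantity is not the global hard-wall cost but the hard-wall cost on a single block $S_{N-j}(V_N)$ of side $\asymp \ell_F^{N-j}$ normalized appropriately; on that scale the sub-leading (constant-order) discrepancies become irrelevant because the height is determined by the \emph{logarithmic} factor $\log(t_F^N)\asymp N$, which the constant-factor ambiguity in Theorem \ref{thm:LD} does not touch, while the sharp constant $\sqrt{2\underline G}$ comes from the \emph{diagonal} Green's function $G_{\mathcal{G}_\infty}(x,x)$ via the classical $\sqrt{2\sigma^2\log n}$ asymptotics for the maximum of $n$ Gaussians — which \emph{is} known sharply. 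Making this separation rigorous — isolating which estimates need only be up-to-constants and which need to be sharp — is the crux of the adaptation.
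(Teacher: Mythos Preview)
Your proposal has a genuine gap in the \emph{upper bound} argument, and it is precisely the one you flag but do not resolve. A Cameron--Martin shift or a direct Gaussian tail on $\bar\varphi_{N,\epsilon}(x)$, followed by division by the lower bound of Theorem~\ref{thm:LD}, only yields
\[
\varlimsup_{N\to\infty}\frac{\log \mathbb{P}(\mathcal{M}_{N,\eta}\mid\Omega^+_{V_N})}{\rho_F^{-N}N\log t_F}\ \le\ -\,c\,a^2\ +\ \cc{1.3}\,\overline{G}\,\mathrm{Cap}_{\mathcal{E}}(F),
\]
with $c$ involving the Green-form constant $\cc{2.6}$ from Lemma~\ref{lem:VNConv}. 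This forces $a^2$ above a threshold proportional to $\overline{G}$ times capacity-comparison constants that you have no control over; it does \emph{not} give $a>\sqrt{2\underline{G}}$. Your ``mesoscopic $j(N)$'' fix does not help, because the obstruction is not a boundary effect but the multiplicative slack in the constants $\cc{1.3},\cc{1.4},\cc{2.6},\cc{2.7}$: any argument that compares a single Gaussian rate against the full hard-wall rate will inherit that slack.

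The paper's upper bound (\S\ref{subsec:heightub}) avoids this by never making that direct comparison. Instead it first uses the conditioning grid to show that on $\Omega^+_{V_N}$ (off a negligible set) \emph{every} block average $\bar\varphi_{\mathfrak{B}}$ already sits at height $\ge(1-\delta)\sqrt{2(\underline{G}-\kappa)N\log t_F}-O(1)$; this step uses only the sharp diagonal input $G_{\mathring{\mathfrak g}_x}(x,x)\uparrow G_{\mathcal G_\infty}(x,x)$ and is where $\underline{G}$ enters. Then the event $\mathcal{M}_{N,\eta}$ forces an \emph{additional} excess $\kappa'm_F^{\theta j}\sqrt{N}$ on $\lfloor m_F^{(1-\theta)j}\rfloor$ many blocks. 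The Gaussian estimate on the weighted sum produces a numerator with two pieces: one scaling like $\langle\mathbbm 1_F,\mu\rangle$ and one like the fixed $\kappa'$. The decisive trick is the \emph{rescaling imbalance}: replacing $\mathcal{E}$ by $\gamma\mathcal{E}$ scales both $\mathrm{Cap}_{\gamma\mathcal{E}}(F)$ and $\mathrm{Cap}_{\gamma\mathcal{E}}(\mathcal{K})$ linearly in $\gamma$, so in the final inequality (\ref{ineq:condprobheight}) the $\underline{G}$-terms and the $\overline{G}\,\cc{1.3}$-term rescale identically, while the $\kappa'$-term does not. Tuning $\gamma$ then drives the threshold on $\kappa'$ to $0$, which is exactly what makes \emph{any} $\eta>0$ work despite the non-sharp constants in Theorem~\ref{thm:LD}. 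Your proposal contains no mechanism of this kind, and without it the upper half of \eqref{eq:samplemeanheight} does not follow.

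For the lower bound your intuition is correct, but the implementation in the paper is not the harmonic-profile decomposition you sketch: it is an empirical-measure statement (Lemma~\ref{lem:nolowerheight}) proved by the same grid conditioning, comparing $\{x:\varphi_x\le\sqrt{\alpha N}\}$ with $\{x:\mu_x\le\sqrt{\alpha' N}\}$ and invoking Lemma~\ref{lem:firsttermvanishes2}. No FKG input is used.
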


We comment that in the case of
$\mathbb{Z}^d$, which has full translational invariance, one can replace $G_{\mathbb{Z}^d}(0,0)$ by
$G_{\mathbb{Z}^d}(x,x)$ for any $x\in \mathbb{Z}^d$. On the other hand, in the case of the Sierpinski carpet
graph, we have no explicit information about how the on-diagonal Green's function $G_{\mathcal{G}_\infty}(x,x)$
varies with $x \in V_\infty$. Nevertheless, our result says that $\sqrt{2\underline{G} \log t_F} \sqrt{N}$, where
$\underline{G} = \inf_{x\in V_\infty}G_{\mathcal{G}_\infty}(x,x)$, sets the leading-order asymptotic height for the
free field above the hard wall on $V_N$. A sketch of the arguments leading to this result will appear at the
beginning of \S\ref{subsec:heightub}.

An interesting open problem is to investigate the maxima and entropic repulsion of the free field on \emph{any}
graph at the critical spectral dimension $d_s=2$. Due to the logarithmic correlation of the random field, one
needs to carry out a multiscale analysis which differs significantly from the approach for higher dimensions. For
some recent progress in characterizing the fine properties of the free field on $\mathbb{Z}^2$, see, for example,
\cite{BDG01,Daviaud,BDZ11,BramsonZeitouni}. Unfortunately, we are not in a position to discuss the analogous
problem in the setting of deterministic self-similar generalized Sierpinski carpets, since there is no concrete
example of such a carpet with $d_s=2$. A perhaps more promising avenue is to analyze the free field on a random
homogeneous Sierpinski carpet, for which there exists a protocol to construct one with arbitrary spectral
dimension \cite{HKKZ}. Other possible candidates include the supercritical percolation cluster and the random conductance model on $\mathbb{Z}^2$, whereupon the random walk has been to proved to satisfy an
invariance principle \cite{DMFGW84, SidoraviciusSznitman, BergerBiskup, MathieuPiatnitski, Mathieu,
BarlowDeuschel, ABDH13}.

The rest of the article is organized as follows. In Section \ref{sec:DFGF} we recapitulate the construction of
local regular Dirichlet forms on Sierpinski carpets via graphical approximations (the Kusuoka-Zhou construction),
and prove the convergence of the discrete Green forms, both on $\mathcal{I}_\infty$ and on $\mathcal{G}_\infty$, to a continuum Green form on $F_\infty$. We then proceed to
prove Theorem \ref{thm:LD} in Section \ref{sec:LD}, and Theorem \ref{thm:samplemeanheight} in Section
\ref{sec:height}. Some background facts from the theory of Dirichlet forms and of Mosco convergence are collected in the Appendix.

\vskip 10pt

\noindent \textbf{Acknowledgements.} 
This work was initiated while the first named author was a visiting researcher under the Hausdorff Trimester Program
\emph{``Mathematical challenges of materials science and condensed matter physics,''} hosted by the Hausdorff Research
Institute for Mathematics in Bonn, Germany. He would like to thank Codina Cotar and Noemi Kurt for fruitful conversations
which inspired this work, and all the program participants for their camaraderie. During the ensuing preparation of the manuscript at Cornell, both authors have benefited from valuable discussions with
Laurent Saloff-Coste, Benjamin Steinhurst, and especially Robert Strichartz, to whom they are grateful for generous advising. Lastly, we thank the referee for providing excellent advice on improving an earlier version of this paper.

\section{Dirichlet forms and Green forms on Sierpinski carpets} \label{sec:DFGF}

The goal of this section is to establish several convergence results of resolvents (or ``Green forms'') on Sierpinski carpet graphs, which will enable us to prove Theorems \ref{thm:LD} and \ref{thm:samplemeanheight}. Here the route is convoluted, as we have to leverage several known but disparate results to establish something which is believed to be true, but not spelled out prominently in the literature. To give an example, even the resolvent convergence (\emph{i.e.} Mosco convergence) is not straightforward to prove on the Sierpinski carpet (graph), although we will present an argument in this section. The main take-away results are Theorem \ref{thm:Greenform} (convergence of the Green forms) and Lemma \ref{lem:VNConv} (consequences of Theorem \ref{thm:Greenform}, suitably specialized to the graph setting); only
Lemma \ref{lem:VNConv} will play a role in subsequent sections.

\emph{Notation.} If $(X,m)$ denotes a measure space, then $\langle
f,\mu\rangle_X$ stands for $\int_X fd\mu$, pairing a function $f$ on $X$ with a Borel measure $\mu$.

\subsection{Kusuoka-Zhou construction of Dirichlet forms} \label{sec:KZ}

Let $F$ be a generalized Sierpinski carpet, and $\mathcal{I}_\infty = (I_\infty,\sim)$ be the inner Sierpinski carpet graph introduced in \S \ref{sec:SCGraph}. For each $N\in\mathbb{N}$ and each $w\in I_\infty$, let $\Psi^{(N)}_w$ be the closed cube of side $\ell_F^{-N}$ centered at $\ell_F^{-N} w$. We define the \emph{mean-value operator} $\tilde{P}_N: L^1(F_\infty,\nu_\infty) \to C(I_\infty;\mathbb{R})$ by
$$ (\tilde{P}_Nf)(w) = \frac{1}{\nu_\infty\left(\Psi^{(N)}_w \cap F_\infty\right)} \int_{\Psi^{(N)}_w \cap F_\infty} f(y) \nu_\infty(dy),$$
Similarly, if $\mu_\infty$ is a Radon measure on $F_\infty$ such that $\mu_\infty \ll \nu_\infty$, then define $\tilde P_N \mu_\infty = \left(\tilde P_N\frac{d\mu_\infty}{d\nu_\infty}\right) \nu_N$, where $\nu_N = \frac{1}{m_F^N}
\mathbbm{1}_{I_\infty}$ is a self-similar measure on $I_\infty$.

As is customary, we define the discrete Dirichlet form on the graph $\mathcal{I}_\infty$ by
$$ E_{\mathcal{I}_\infty}(f_1,f_2) = \frac{1}{2} \sum_{\substack{w,w'\in I_\infty\\w\sim w'}}(f_1(w)-
f_1(w'))(f_2(w)-f_2(w'))$$
for all $f_1, f_2$ in the natural domain $\mathcal{D}(E_{\mathcal{I}_\infty}) = \{f \in \ell^2(I_\infty) :
E_{\mathcal{I}_\infty}(f,f)<\infty\}$. Furthermore, let $\mathcal{E}_N^\mathcal{I}= \rho_F^N E_{\mathcal{I}_\infty}$ be the \emph{renormalized} Dirichlet form, where $\rho_F \in (0,\infty)$ is the resistance scale factor identified in \S \ref{sec:fractal}. 

Let $\displaystyle \mathcal{F}_0 := \left\{f\in L^2(F_\infty,\nu_\infty): \sup_N \mathcal{E}_N^\mathcal{I}(\tilde P_N f, \tilde P_N f) <\infty\right\}$. The following convergence result for $(\mathcal{E}_N^\mathcal{I})_N$ is originally due to Kusuoka and Zhou \cite[Proposition 5.2 \& Theorem 5.4]{KusuokaZhou}, and later generalized in \cite[Lemma 4.1 \& Theorem 4.3]{HKKZ}.

\begin{proposition}
\begin{enumerate}[label={(\roman*)}]
\item There exists a constant $\cc{2.1}$ such that for all $N,M\geq 1$ and all $f\in \mathcal{F}_0$, 
$$\mathcal{E}_N^\mathcal{I}(\tilde P_N f, \tilde P_N f) \leq \cc{2.1} \mathcal{E}_{N+M}^\mathcal{I}(\tilde P_{N+M} f,\tilde P_{N+M} f).$$
\item There exists $(\mathcal{E},\mathcal{F}_0)\in \mathfrak{E}$ and positive constants $\cc{2.2}$ and $\cc{2.3}$ such that for all $f\in \mathcal{F}_0$,
\begin{equation}
\cc{2.2} \sup_N \mathcal{E}_N^\mathcal{I}(\tilde P_N f,\tilde P_N f) \leq \mathcal{E}(f,f) \leq \cc{2.3} \varliminf_{N\to\infty} \mathcal{E}_N^\mathcal{I}(\tilde P_N f,\tilde P_N f).
\label{eq:KZ}
\end{equation}
\end{enumerate}
\label{prop:KZ}
\end{proposition}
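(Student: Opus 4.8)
The plan is to establish (i) first --- a comparison of the renormalized energies $\mathcal{E}_N^\mathcal{I}(\tilde P_N f,\tilde P_N f)$ across scales that is uniform in \emph{both} $N$ and $M$ --- and then to read off (ii) by extracting a subsequential limit and trapping it between $\sup_N$ and $\varliminf_N$ of these energies. The one indispensable input is a \emph{scale-invariant resistance estimate} for the inner graphs: there is a constant, depending only on $F$, such that the electrical network obtained from $\mathcal{E}_{N+M}^\mathcal{I}$ by taking its trace (Schur complement) onto the coarse vertex set $I_N$ is comparable, as a quadratic form, to $\mathcal{E}_N^\mathcal{I}$; equivalently, subdividing a level-$N$ cell into its $m_F^M$ level-$(N+M)$ subcells changes its face-to-face effective resistance by at most a bounded factor. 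On $\mathbb{Z}^d$ this is immediate from exact self-similarity, but on a GSC it is essentially the content of the Barlow--Bass / McGillivray resistance bounds \cites{BB90Resistance,McGillivray,BB99} for the crosswire networks $D_N$, which by the symmetry \ref{cond:GSC1} and non-diagonality \ref{cond:GSC3} are comparable to the $\mathcal{I}_N$ uniformly in the scale. I would take this as a black box.

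For (i), I would fix $f\in\mathcal{F}_0$ and put $g=\tilde P_{N+M}f$. Since the average of $f$ over a level-$N$ cell equals the $\nu_\infty$-weighted (here, by self-similarity of $\nu_\infty$, the uniform) average of the level-$(N+M)$ cell averages inside it, one has $\tilde P_N f=\Pi_{N,M}g$, where $\Pi_{N,M}$ replaces a function by its level-$N$ cell averages; so it suffices to bound $\mathcal{E}_N^\mathcal{I}(\Pi_{N,M}g,\Pi_{N,M}g)$ by a uniform multiple of $\mathcal{E}_{N+M}^\mathcal{I}(g,g)$. Now $\mathcal{E}_N^\mathcal{I}(\Pi_{N,M}g,\Pi_{N,M}g)$ is, up to the renormalization $\rho_F^N$, a sum over pairs $Q\sim Q'$ of adjacent level-$N$ cells of the squared average-differences $(\bar{g}_Q-\bar{g}_{Q'})^2$. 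For each such pair, the connectedness \ref{cond:GSC2} and non-diagonality \ref{cond:GSC3} keep the part of $F_\infty$ inside the block $Q\cup Q'$ connected and of good shape, and the resistance estimate gives that the effective resistance between the halves $Q$ and $Q'$, in the fine network $\mathcal{E}_{N+M}^\mathcal{I}$ restricted to that block, is comparable to $\rho_F^{-N}$ \emph{uniformly in $M$}. A discrete Poincar\'e inequality with that resistance as its constant then bounds $\rho_F^{N}(\bar{g}_Q-\bar{g}_{Q'})^2$ by a fixed multiple of the portion of $\mathcal{E}_{N+M}^\mathcal{I}(g,g)$ carried by edges inside a bounded neighbourhood of $Q\cup Q'$; summing over the bounded-overlap collection of such blocks yields (i).

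For (ii), (i) gives $\mathcal{E}_N^\mathcal{I}(\tilde P_N f,\tilde P_N f)\le \cc{2.1}\,\mathcal{E}_{N+M}^\mathcal{I}(\tilde P_{N+M}f,\tilde P_{N+M}f)$, and letting $M\to\infty$ yields $\sup_N \mathcal{E}_N^\mathcal{I}(\tilde P_N f,\tilde P_N f)\le \cc{2.1}\,\varliminf_{N\to\infty}\mathcal{E}_N^\mathcal{I}(\tilde P_N f,\tilde P_N f)$, so on $\mathcal{F}_0$ the supremum and the lower limit of these energies are comparable. I would fix a countable subset $\mathcal D\subset\mathcal{F}_0$ dense for this norm, extract by a diagonal argument a subsequence $(N_k)$ along which $\lim_k \mathcal{E}_{N_k}^\mathcal{I}(\tilde P_{N_k}f,\tilde P_{N_k}g)$ exists for all $f,g\in\mathcal D$, define $\mathcal{E}$ as this limit and extend it to $\mathcal{F}_0$ by the comparison just obtained; then \eqref{eq:KZ} holds with appropriate $\cc{2.2},\cc{2.3}$. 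To place $(\mathcal{E},\mathcal{F}_0)$ in $\mathfrak{E}$: closedness follows since \eqref{eq:KZ} makes the $\mathcal{E}_1$-norm equivalent on $\mathcal{F}_0$ to $\big(\|f\|_{L^2(F_\infty,\nu_\infty)}^2+\sup_N\mathcal{E}_N^\mathcal{I}(\tilde P_N f,\tilde P_N f)\big)^{1/2}$, which is complete by $L^2$-lower semicontinuity of each summand; the Markov property passes to the limit because each $\tilde P_N$ is an averaging (hence Markovian) operator and $E_{\mathcal{I}_\infty}$ is a Dirichlet form; regularity comes from the resistance estimate, which makes the resistance metric of $\mathcal{E}$ comparable to a positive power of the Euclidean metric, so finite-energy functions are H\"older continuous, $\mathcal{F}_0\subset C(F_\infty)$, and the needed $C_c(F_\infty)$ cutoffs are built cell by cell; locality and strong locality hold because disjointly supported functions have vanishing mutual energy in $\mathcal{E}_N^\mathcal{I}(\tilde P_N\cdot,\tilde P_N\cdot)$ once $N$ is large and there is no killing; non-triviality is the resistance lower bound; conservativeness is $1\in\mathcal{F}_{0,\mathrm{loc}}$ with $\mathcal{E}(1,1)=0$; and invariance under all local symmetries of $F_\infty$ is inherited from each $\mathcal{I}_N$ via \ref{cond:GSC1} (replace the subsequential limit by its symmetrization if necessary). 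This is exactly \cite{KusuokaZhou}*{Prop.~5.2 \& Thm.~5.4}, refined in \cite{HKKZ}*{Lem.~4.1 \& Thm.~4.3}.

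The main obstacle is the scale-invariant resistance estimate underlying (i): the constant relating $\mathcal{E}_N^\mathcal{I}$ to the trace of $\mathcal{E}_{N+M}^\mathcal{I}$ must be independent of $M$, i.e.\ refining a cell arbitrarily often may change its face-to-face resistance only by a bounded factor. Unlike on $\mathbb{Z}^d$, this forces one to control how current redistributes through the fractal cross-section, and it is precisely here that non-diagonality \ref{cond:GSC3} is essential --- it excludes the corner-only connections that would let the resistance blow up under refinement. I would import this ``knight's move'' estimate from \cites{BB90Resistance,McGillivray,BB99} rather than reprove it; by contrast, the construction of the limit form and the verification of its regularity and locality are comparatively soft, which is why the proposition is quoted as a citation.
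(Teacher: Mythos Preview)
The paper does not prove this proposition at all: it is stated as a citation to Kusuoka--Zhou \cite{KusuokaZhou}*{Proposition 5.2 \& Theorem 5.4} and its generalization \cite{HKKZ}*{Lemma 4.1 \& Theorem 4.3}, with no argument given. You acknowledge this yourself at the end of your proposal, so there is nothing in the paper's own text to compare against.

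Your sketch is a reasonable outline of how the cited result is established, and the identification of the scale-invariant resistance estimate as the crux of part (i) is exactly right. One point worth flagging: in Kusuoka--Zhou's original construction, the limiting form $\mathcal{E}$ is obtained not by a diagonal subsequential limit of the quadratic forms $\mathcal{E}_N^\mathcal{I}(\tilde P_N\cdot,\tilde P_N\cdot)$ as you describe, but as a cluster point of the associated rescaled \emph{Markov processes} on $\mathcal{I}_N$ (the family they call $\mathcal{D}ch$; see the paper's first Remark after the proposition). This probabilistic route makes locality and regularity of the limit somewhat more transparent than the direct form-limit argument you give. Indeed, the paper's later Remark following Theorem \ref{thm:Moscolimit} explicitly cautions that for Mosco limits of Dirichlet forms on fractals one \emph{cannot} easily verify strong locality, because the energy measures are singular with respect to the self-similar measure; your one-line justification ``disjointly supported functions have vanishing mutual energy once $N$ is large'' is correct for a pointwise subsequential limit but would need more care in other topologies. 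That said, for a sketch of a result the paper takes as a black box, your proposal captures the essential mechanism.
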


\begin{remark}
In their original work \cite{KusuokaZhou}, Kusuoka and Zhou identified a family of Dirichlet forms, denoted $\mathcal{D}ch$, which are associated with cluster points of the sequence of suitably rescaled Markov processes on $\mathcal{I}_N$. Then they proved (\ref{eq:KZ}) for any $\mathcal{E}\in \mathcal{D}ch$, and showed that $(\mathcal{E}, \mathcal{F}_0)$ is a local regular Dirichlet form. Note that $\mathcal{D}ch \subset \mathfrak{E}$ by virtue of \cite[Theorem 3.2]{BBKT}.
\end{remark}

\begin{remark}
We emphasize that Proposition \ref{prop:KZ} does not imply that the limit points of $\left(\mathcal{E}_N^\mathcal{I}\right)_N$ (in either the pointwise, $\Gamma$-, or the Mosco topology) belong to $\mathfrak{E}$. 
Rather, each of them is \emph{comparable} to any element of $\mathfrak{E}$, 
in the sense that for any limit point $\bar{\mathcal{E}}$ and any $\mathcal{E} \in \mathfrak{E}$, 
there exist positive constants $c$ and $C$ such that $c\mathcal{E}(f,f)\leq\bar{\mathcal{E}}(f,f) \leq C\mathcal{E}(f,f)$ for all $f\in \mathcal{F}_0$. See Theorem \ref{thm:Moscolimit} below.
\end{remark}

\subsection{Convergence of discrete Green forms}

In this subsection we shall consider Dirichlet forms on a class of smooth measures (instead of functions), and derive a convergence result similar to Proposition \ref{prop:KZ}. From now on let $\mathcal{M}_+(F_\infty)$ be the family of all nonnegative finite Borel measures on $F_\infty$, and let
$$ 
\mathcal{M}_{0,{\rm ac}}^{(0)}(F_\infty) = \left\{ \mu \in \mathcal{M}_+(F_\infty): \mu \ll \nu,~\frac{d\mu}{d\nu} \in \mathcal{F}_0 \right\}.
$$
In what follows we need a fact from the theory of Dirichlet forms (see the Appendix for a brief summary, and more details in \cite{FOT, ChenFukushima}): a transient Dirichlet form $(\mathcal{E},\mathcal{F})$ on $L^2(X,m)$ has a corresponding $0$-order potential operator $U$ such that $\mathcal{E}(U\mu,h) = \langle h,\mu\rangle_X$ for every ``smooth'' measure $\mu$ on $X$ and every $h\in \mathcal{F}$.

Let $G_{\mathcal{I}_N}: I_N \times I_N \to \mathbb{R}$ be the
Green's function for simple random walk on $\mathcal{I}_\infty$ killed upon exiting $\mathcal{I}_N$. By the
reproducing property of Green's function,
$E_{\mathcal{I}_\infty}(G_{\mathcal{I}_N}(w,\cdot),h) = h(w)$ for all $h\in \mathcal{D}(E_{\mathcal{I}_\infty})$
with ${\rm supp}(h) \subset I_N$. Therefore, denoting by $U_N^\mathcal{I}$ the $0$-order potential operator associated
with $\mathcal{E}_N^\mathcal{I}$, we have
$$
\mathcal{E}_N^\mathcal{I}(U_N^\mathcal{I} \mu, h) = \langle h, \mu\rangle_{\mathcal{I}_N} = \frac{1}{m_F^N}
\sum_{w\in I_N}h(w) \frac{d\mu}{d\nu_N}(w)= \mathcal{E}_N^\mathcal{I}\left( \rho_F^{-N}\frac{1}{m_F^N}\sum_{w\in I_N}
G_{\mathcal{I}_N}(\cdot,w)\frac{d\mu}{d\nu_N}(w),h\right)
$$
for all $h\in\mathcal{D}(E_{\mathcal{I}_\infty})$ with ${\rm supp}(h) \subset I_N$, and all nonnegative measures $\mu$ with support in $I_N$. It follows that
\begin{eqnarray}
 \nonumber \mathcal{E}_N^\mathcal{I}(U_N^\mathcal{I} \mu, U_N^\mathcal{I} \mu) &=&
\mathcal{E}_N^\mathcal{I}\left(\rho_F^{-N}\frac{1}{m_F^N}\sum_{w\in I_N}
G_{\mathcal{I}_N}(\cdot,w)\frac{d\mu}{d\nu_N}(w),\rho_F^{-N}\frac{1}{m_F^N}\sum_{w'\in I_N}
G_{\mathcal{I}_N}(\cdot,w')\frac{d\mu}{d\nu_N}(w') \right)\\ 
&=& \rho_F^{-N} \frac{1}{m_F^{2N}}\sum_{w,w' \in I_N} G_{\mathcal{I}_N}(w,w')\frac{d\mu}{d\nu_N}(w)
\frac{d\mu}{d\nu_N}(w') \label{eq:Greenformdef}
\end{eqnarray}
for all such measures $\mu$. The expression in (\ref{eq:Greenformdef}) is what we shall call the
\textbf{Green form} corresponding to the Dirichlet form $\mathcal{E}_N^\mathcal{I}$. It has a kernel given by the
(renormalized) Green's function $\rho_F^{-N} G_{\mathcal{I}_N}$, whence the name.

Our first main result of this section describes the convergence of the discrete Green forms.

\begin{theorem}
There exist $(\mathcal{E},\mathcal{F}) \in \mathfrak{E}$ and constants $\cc{2.4}(\mathcal{E}), \cc{2.5}(\mathcal{E})$ such that
\begin{eqnarray}
\nonumber \cc{2.4} \mathcal{E}(U\mu, U\mu) &\leq& \varliminf_{N\to\infty}\mathcal{E}_N^\mathcal{I}\left(U_N^\mathcal{I}
\tilde P_N \mu, U_N^\mathcal{I} \tilde P_N \mu\right) \\ 
 &\leq&
\varlimsup_{N\to\infty}\mathcal{E}_N^\mathcal{I}\left(U_N^\mathcal{I} \tilde P_N \mu, U_N^\mathcal{I} \tilde P_N
\mu\right) ~ \leq ~ \cc{2.5} \mathcal{E}(U\mu, U\mu) \label{eq:Greenforms}
\end{eqnarray}
for all $\mu \in \mathcal{M}_{0,{\rm ac}}^{(0)}(F_\infty)$, where $U$ is the $0$-order potential operator associated with $\mathcal{E}$.
\label{thm:Greenform}
\end{theorem}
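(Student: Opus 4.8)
The plan is to recast each discrete Green form as a $0$-order resolvent pairing, to obtain the lower bound by a Dirichlet-principle argument confined to discretized test functions, and to obtain the upper bound by passing to Mosco-convergent subsequences of $(\mathcal{E}_N^\mathcal{I})_N$ and transferring the convergence to the $0$-order Green operators. Write $g=d\mu/d\nu\in\mathcal{F}_0\subset L^2(F_\infty,\nu_\infty)$, a function supported on the compact set $F$. The computation leading to (\ref{eq:Greenformdef}) shows that $v_N:=U_N^\mathcal{I}\tilde P_N\mu=G_0^N(\tilde P_N g)$, where $G_0^N$ is the $0$-order Green operator of $(\mathcal{E}_N^\mathcal{I},L^2(I_N,\nu_N))$, so that $E_N:=\mathcal{E}_N^\mathcal{I}(U_N^\mathcal{I}\tilde P_N\mu,U_N^\mathcal{I}\tilde P_N\mu)=\langle v_N,\tilde P_N g\rangle_{L^2(I_N,\nu_N)}$, and by the Dirichlet principle
\[
E_N=\sup\bigl\{\,2\langle v,\tilde P_N g\rangle_{L^2(I_N,\nu_N)}-\mathcal{E}_N^\mathcal{I}(v,v)\ :\ v\in\ell^2(I_N),\ \mathrm{supp}\,v\subset I_N\,\bigr\},
\]
while by regularity of $\mathcal{E}$, $\mathcal{E}(U\mu,U\mu)=\sup\{2\langle f,\mu\rangle_{F_\infty}-\mathcal{E}(f,f):f\in\mathcal{F}_0\cap C_c(F_\infty)\}$. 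Because $\mu$ is carried by the compact piece $F$, the transient Green's-function estimates for GSC graphs supply a uniform bound $\sup_N E_N<\infty$, which is what will let me argue by subsequences.

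\emph{Lower bound.} First I would restrict the supremum defining $E_N$ to test functions $v=\tilde P_N f$ with $f\in\mathcal{F}_0\cap C_c(F_\infty)$; for $N$ large these are supported in $I_N$. Since the mean-value operators $\tilde P_N$ converge to the identity, $\langle\tilde P_N f,\tilde P_N g\rangle_{L^2(I_N,\nu_N)}\to\langle f,g\rangle_{L^2(F_\infty,\nu_\infty)}=\langle f,\mu\rangle_{F_\infty}$, and Proposition \ref{prop:KZ}(ii) gives $\mathcal{E}_N^\mathcal{I}(\tilde P_N f,\tilde P_N f)\le\cc{2.2}^{-1}\mathcal{E}(f,f)$. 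Hence $\varliminf_N E_N\ge 2\langle f,\mu\rangle_{F_\infty}-\cc{2.2}^{-1}\mathcal{E}(f,f)$; optimizing over $f$ and using density of $\mathcal{F}_0\cap C_c(F_\infty)$ in $\mathcal{F}_0$ gives $\varliminf_N E_N\ge\cc{2.2}\,\mathcal{E}(U\mu,U\mu)$.

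\emph{Upper bound.} Proposition \ref{prop:KZ}(i)--(ii) shows that $(\mathcal{E}_N^\mathcal{I})_N$ is almost-monotone and uniformly comparable to the fixed closed form $\mathcal{E}$, so any subsequence admits a further sub-subsequence along which $\mathcal{E}_{N_k}^\mathcal{I}$ converges in the Mosco sense to some closed form $\bar{\mathcal{E}}$ with $c\,\mathcal{E}\le\bar{\mathcal{E}}\le C\,\mathcal{E}$ on $\mathcal{F}_0$, the constants $c,C>0$ being independent of the subsequence (this is the content of Theorem \ref{thm:Moscolimit}). Mosco convergence entails strong convergence of the $\alpha$-resolvents for $\alpha>0$; as $\bar{\mathcal{E}}$ is transient (it inherits transience from $\mathcal{E}$ via the comparability, using $\rho_F<1$), this upgrades to strong convergence $G_0^{N_k}\to\bar G_0$ of the $0$-order Green operators in the Kuwae--Shioya sense. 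Since $\tilde P_{N_k}g\to g$ strongly, $v_{N_k}=G_0^{N_k}(\tilde P_{N_k}g)\to\bar G_0 g=\bar U\mu$ strongly, whence $E_{N_k}\to\langle\bar U\mu,g\rangle_{L^2}=\bar{\mathcal{E}}(\bar U\mu,\bar U\mu)$. The comparison $c\,\mathcal{E}\le\bar{\mathcal{E}}\le C\,\mathcal{E}$ passes, under the Legendre transform (which reverses inequalities between quadratic forms), to $C^{-1}\mathcal{E}(U\mu,U\mu)\le\bar{\mathcal{E}}(\bar U\mu,\bar U\mu)\le c^{-1}\mathcal{E}(U\mu,U\mu)$. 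Thus every subsequence of $(E_N)$ has a sub-subsequence with limit in $[C^{-1},c^{-1}]\,\mathcal{E}(U\mu,U\mu)$, and since $(E_N)$ is bounded this forces $C^{-1}\mathcal{E}(U\mu,U\mu)\le\varliminf_N E_N\le\varlimsup_N E_N\le c^{-1}\mathcal{E}(U\mu,U\mu)$. One may therefore take $\cc{2.4}=\max\{\cc{2.2},C^{-1}\}$ and $\cc{2.5}=c^{-1}$.

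The main obstacle is the two-sided squeezing of the Mosco-limit points between fixed multiples of $\mathcal{E}$, i.e.\ Theorem \ref{thm:Moscolimit}: one must convert the one-sided, discretization-dependent estimates of Proposition \ref{prop:KZ} into a genuine comparison of closed forms. The crucial ingredient is the coarse-graining (resistance-scaling) estimate $\mathcal{E}_M^\mathcal{I}(\tilde\Pi_M v,\tilde\Pi_M v)\le C\,\mathcal{E}_N^\mathcal{I}(v,v)$, valid for $M\le N$ and \emph{arbitrary} $v$ on $I_N$ with a uniform constant, where $\tilde\Pi_M$ denotes the cell-averaging operator; this refines Proposition \ref{prop:KZ}(i) and is precisely where the hypotheses (GSC1)--(GSC4) and the definition of $\rho_F$ enter. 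Combined with weak precompactness of bounded-energy sequences and lower semicontinuity of the $\mathcal{E}_M^\mathcal{I}$, the same estimate also gives a Mosco-free derivation of the upper bound: a weak subsequential limit $v_\infty$ of $(v_N)$ then lies in $\mathcal{F}_0$ with $\mathcal{E}(v_\infty,v_\infty)\le C\varlimsup_N E_N$, while $\varlimsup_N E_N=\langle v_\infty,\mu\rangle_{F_\infty}=\mathcal{E}(U\mu,v_\infty)\le\mathcal{E}(U\mu,U\mu)^{1/2}\mathcal{E}(v_\infty,v_\infty)^{1/2}$, which closes the loop. A secondary technical point is the passage from $\alpha>0$ resolvents to $0$-order Green operators, for which it is essential that $\mu$ sit inside the compact set $F$ (together with the uniform bound $\sup_N E_N<\infty$); and since neither the comparison constants nor $\cc{2.2}$ are known explicitly, one cannot expect $\cc{2.4}=\cc{2.5}$, consistent with the inequality — rather than equality — asserted in the statement.
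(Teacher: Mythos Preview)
Your proposal is correct and, for the upper bound, essentially coincides with the paper's proof: both pass to Mosco-convergent subsequences (Theorem~\ref{thm:Moscolimit}), use strong resolvent convergence to identify the subsequential limit of $E_N$ as $\bar{\mathcal{E}}(\bar U\mu,\bar U\mu)$ (the paper isolates this step as Lemma~\ref{lem:resolventconv}), and then invert the form comparison $C\mathcal{E}\le\bar{\mathcal{E}}\le C'\mathcal{E}$ to a Green-form comparison. Where you invoke the Legendre transform, the paper does the same inversion by hand via Cauchy--Schwarz and the identity $\mathcal{E}(U\mu,h)=\int\widetilde h\,d\mu=\bar{\mathcal{E}}(\bar U\mu,h)$, obtaining $[\bar{\mathcal{E}}(\bar U\mu,\bar U\mu)]^2=[\mathcal{E}(U\mu,\bar U\mu)]^2\le\mathcal{E}(U\mu,U\mu)\,\mathcal{E}(\bar U\mu,\bar U\mu)\le C^{-1}\mathcal{E}(U\mu,U\mu)\,\bar{\mathcal{E}}(\bar U\mu,\bar U\mu)$.

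The one genuine difference is the lower bound. The paper extracts \emph{both} inequalities from the Mosco-subsequence argument, applying the two-sided comparison of Theorem~\ref{thm:Moscolimit}(iv) to the maximal and minimal Mosco cluster points. Your direct Dirichlet-principle lower bound --- restrict the variational supremum to $v=\tilde P_N f$ and use $\mathcal{E}_N^\mathcal{I}(\tilde P_N f,\tilde P_N f)\le\cc{2.2}^{-1}\mathcal{E}(f,f)$ from Proposition~\ref{prop:KZ}(ii) --- is a legitimate shortcut that avoids Mosco convergence entirely for that half, and even yields the explicit constant $\cc{2.4}=\cc{2.2}$. Your alternative ``Mosco-free'' upper bound via weak subsequential limits of $(v_N)$ and the coarse-graining estimate is also sound and closer in spirit to this direct approach, though the paper does not pursue it.
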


While it is reasonable to expect that Theorem \ref{thm:Greenform} follows from Proposition \ref{prop:KZ}, the connection is \emph{not} immediate. To fill in the necessary gap, we shall establish the (subsequential) \emph{Mosco convergence} of the discrete Dirichlet forms $(\mathcal{E}_N^\mathcal{I})_N$, which is an equivalent condition to (subsequential) \emph{strong resolvent convergence} in $L^2$ \cite{DalMaso,Mosco94}. This then implies (subsequential) \emph{strong resolvent convergence in the energy norm} (Lemma \ref{lem:resolventconv}). In addition, we will show that each Mosco limit point of $(\mathcal{E}_N^\mathcal{I})_N$ is comparable to some (and hence any) element of $\mathfrak{E}$, which finally leads to Theorem \ref{thm:Greenform}. 

Our analysis of the Mosco limits draws from similar analysis of the $\Gamma$-limits by Sturm \cite{SturmDiffusion} and by Kumagai and Sturm \cite{KumagaiSturm}, whose goal was to construct diffusions on an arbitrary metric measure space via $\Gamma$-limits. For the reader's convenience, we have collected some general notions about $\Gamma$-convergence and Mosco convergence in the Appendix.

\begin{theorem}
Let $\displaystyle \mathcal{E}^*(f,f) = \varlimsup_{N\to\infty} \mathcal{E}_N^\mathcal{I}(\tilde P_N f, \tilde P_N f)$ and $\mathcal{F}^\ast := \{f \in C_c(F_\infty): \mathcal{E}^\ast(f,f)<\infty \}$. 
\begin{enumerate}[label={(\roman*)}]
\item $\mathcal{F}^\ast$ is a Lipschitz space and is dense in $C_c(F_\infty)$.
\item There exists a subsequence $(r_N)_N \subset \mathbb{N}$ along which the Mosco limit $\displaystyle \mathcal{E}_M := M\text{-}\lim_{N\to\infty} \mathcal{E}_{r_N}^\mathcal{I}$ exists. Moreover, given any two Mosco limit points $\mathcal{E}_M$ and $\mathcal{E}_M'$, there exist constants $C$ and $C'$ such that $C\mathcal{E}_M(f,f) \leq \mathcal{E}_M'(f,f) \leq C'\mathcal{E}_M(f,f)$ for all $f\in \mathcal{F}^*$.
\end{enumerate}
For each Mosco limit point $\mathcal{E}_M$ of $(\mathcal{E}_N^\mathcal{I})_N$:
\begin{enumerate}[label={(\roman*)}]
\setcounter{enumi}{2}
\item $(\mathcal{E}_M, \mathcal{F}^\ast)$ is a closable symmetric Markovian form on $L^2(F_\infty, \nu_\infty)$, and can be extended to a regular Dirichlet form $(\bar{\mathcal{E}}, \bar{\mathcal{F}})$ on $L^2(F_\infty,\nu_\infty)$ with core $\mathcal{F}^\ast$.
\item There exist $\mathcal{E} \in \mathfrak{E}$ and constants $C$ and $C'$ such that $C \mathcal{E} \leq \bar{\mathcal{E}} \leq C'\mathcal{E}$.
\end{enumerate}
\label{thm:Moscolimit}
\end{theorem}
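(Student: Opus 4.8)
The plan is to derive all four assertions from Proposition~\ref{prop:KZ} together with the general theory of $\Gamma$- and Mosco convergence, following the blueprint laid down for $\Gamma$-limits by Sturm~\cite{SturmDiffusion} and Kumagai--Sturm~\cite{KumagaiSturm}. Throughout, write $a_N(f):=\mathcal{E}_N^\mathcal{I}(\tilde{P}_N f,\tilde{P}_N f)$, and regard $\mathcal{E}_N^\mathcal{I}$ as a closed, densely defined, symmetric, Markovian form on $L^2(F_\infty,\nu_\infty)$ via $f\mapsto a_N(f)$, with domain $\{f:\tilde{P}_N f\in\mathcal{D}(E_{\mathcal{I}_\infty})\}$; this is legitimate because $\tilde{P}_N\colon L^2(F_\infty,\nu_\infty)\to\ell^2(I_\infty,\nu_N)$ is a contraction admitting the isometric right inverse $\iota_N$ (extension by level-$N$ cellwise constants), so that $\tilde{P}_N\iota_N=\mathrm{id}$ and $\iota_N\tilde{P}_N$ is the conditional expectation onto the $\sigma$-algebra generated by the level-$N$ cells. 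The first observation is that Proposition~\ref{prop:KZ}(i) forces $a_{N_0}(f)\le\cc{2.1}\,\varliminf_{N\to\infty}a_N(f)$ for every $f$ and every $N_0$, whence $\mathcal{E}^*=\varlimsup_N a_N$, $\sup_N a_N$, and $\varliminf_N a_N$ are mutually comparable; consequently $\mathcal{F}^*=\mathcal{F}_0\cap C_c(F_\infty)$, and by Proposition~\ref{prop:KZ}(ii) the quadratic form $\mathcal{E}^*$ is comparable on $\mathcal{F}^*$ to the restriction of any element of $\mathfrak{E}$. Assertion (i) then follows as in \cite{KumagaiSturm}: density of $\mathcal{F}_0\cap C_c(F_\infty)$ in $C_c(F_\infty)$ is exactly the regularity of $(\mathcal{E},\mathcal{F}_0)\in\mathfrak{E}$, while the identification of $\mathcal{F}^*$ with a Lipschitz-type (Besov) function space is transported from a genuine Dirichlet form $\mathcal{E}\in\mathfrak{E}$ using the comparison together with the self-similar resistance estimate $R_{\mathrm{eff}}(x,y)\asymp\|x-y\|^{d_w(F)-d_h(F)}$ on $F_\infty$ \cite{BB99}.

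For the existence statement in (ii) I would invoke the sequential compactness of Mosco convergence for sequences of closed symmetric forms on a separable Hilbert space \cites{DalMaso,Mosco94}: along a suitable subsequence $(r_N)$, the forms $\mathcal{E}_{r_N}^\mathcal{I}$ Mosco-converge to a closed symmetric form $\mathcal{E}_M$ on $L^2(F_\infty,\nu_\infty)$. Granting the two-sided comparison proved below (which in particular gives $\mathcal{F}^*\subseteq\mathcal{D}(\mathcal{E}_M)$ and $\mathcal{E}_M\le\mathcal{E}^*$ on $\mathcal{F}^*$), assertion (iii) is then essentially automatic: the Markovian property is stable under $\Gamma$- and Mosco limits, so $\mathcal{E}_M$ is Markovian and $\mathcal{F}^*$ is truncation-closed; the restriction $(\mathcal{E}_M,\mathcal{F}^*)$, being a restriction of a closed form, is closable; and its closure $(\bar{\mathcal{E}},\bar{\mathcal{F}})$ is a Dirichlet form, regular with core $\mathcal{F}^*$ since $\mathcal{F}^*$ is dense in $C_c(F_\infty)$ by (i) and dense in $\bar{\mathcal{F}}$ by construction. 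Finally, Mosco convergence implies subsequential strong resolvent convergence (Lemma~\ref{lem:resolventconv}), which is what subsequently yields Theorem~\ref{thm:Greenform}.

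The heart of the matter, and the step I expect to be the main obstacle, is the two-sided comparison $c\,\mathcal{E}^*(f,f)\le\mathcal{E}_M(f,f)\le C\,\mathcal{E}^*(f,f)$ on $\mathcal{F}^*$, which simultaneously yields the comparability of any two Mosco limit points (completing (ii)) and, through Proposition~\ref{prop:KZ}(ii), assertion (iv). The upper bound is the easy direction: the conditional expectations $f_n:=\iota_{r_n}\tilde{P}_{r_n}f$ converge to $f$ in $L^2$, so the ($\Gamma$-)$\liminf$ inequality of Mosco convergence together with $\tilde{P}_{r_n}\iota_{r_n}=\mathrm{id}$ gives $\mathcal{E}_M(f,f)\le\varliminf_n a_{r_n}(f_n)=\varliminf_n a_{r_n}(f)\le\mathcal{E}^*(f,f)$. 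For the lower bound, the crucial input is that $f\mapsto a_N(f)$ is lower semicontinuous (indeed weakly so) on $L^2(F_\infty,\nu_\infty)$, because $\tilde{P}_N$ is $L^2$-continuous and $E_{\mathcal{I}_\infty}$ is a supremum of continuous finite quadratic forms; hence so is $\mathcal{E}^*$. Given a recovery sequence $u_n\to f$ in $L^2$ with $\mathcal{E}_M(f,f)=\lim_n a_{r_n}(u_n)$, I replace it by $w_n:=\iota_{r_n}\tilde{P}_{r_n}u_n$: then $w_n\to f$ in $L^2$ (because $\iota_{r_n}\tilde{P}_{r_n}$ is contractive and converges strongly to the identity), $a_{r_n}(w_n)=a_{r_n}(u_n)$ since $\tilde{P}_{r_n}\iota_{r_n}=\mathrm{id}$, and $w_n\in\mathcal{F}_0$ because a function constant on level-$r_n$ cells has $\mathcal{I}_K$-energy comparable, uniformly in $K\ge r_n$ and in $r_n$, to its $\mathcal{I}_{r_n}$-energy, by the self-similar resistance scaling of the carpet. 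With $w_n\in\mathcal{F}_0$, Proposition~\ref{prop:KZ}(i) yields $a_N(w_n)\le\cc{2.1}\,a_{r_n}(w_n)$ once $r_n\ge N$; letting $n\to\infty$ and using lower semicontinuity of $a_N$ gives $a_N(f)\le\varliminf_n a_N(w_n)\le\cc{2.1}\,\varliminf_n a_{r_n}(w_n)=\cc{2.1}\,\mathcal{E}_M(f,f)$, and taking the supremum over $N$ closes the estimate. (The same lower-semicontinuity fact also supplies the weak $\Gamma$-$\liminf$ inequality that upgrades $\Gamma$- to Mosco convergence.) The one point that still needs care, and that I would isolate as a separate lemma from the resistance estimates of \cite{BB99}, is precisely the uniform-in-scale comparison of the energies of cellwise-constant functions used to place $w_n$ in $\mathcal{F}_0$.
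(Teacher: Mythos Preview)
Your overall architecture mirrors the paper's: Proposition~\ref{prop:KZ} for (i), sequential compactness of Mosco limits for (ii), and the two-sided comparison $c\,\mathcal{E}^*\le\mathcal{E}_M\le\mathcal{E}^*$ on $\mathcal{F}^*$ to obtain (iii) and (iv). The upper bound and parts (i)--(iii) are fine.

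The gap is precisely the lemma you flag at the end. The claim that a function constant on level-$r_n$ cells has $\mathcal{I}_K$-energy uniformly comparable (in $K\ge r_n$) to its $\mathcal{I}_{r_n}$-energy is \emph{false} on transient carpets. On the Menger sponge, a face of a level-$r_n$ cell is itself a two-dimensional Sierpinski carpet, so the number of level-$K$ adjacencies across such a face grows like $8^{K-r_n}$; hence $a_K(w_n)/a_{r_n}(w_n)\asymp(8\rho_F)^{K-r_n}\to\infty$, since $\rho_F\ge\ell_F^2/m_F=9/20$ forces $8\rho_F>1$. Cellwise-constant functions are therefore \emph{not} in $\mathcal{F}_0$; indicators of cells never lie in the Besov--Lipschitz domain of any $\mathcal{E}\in\mathfrak{E}$. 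Resistance scaling governs the energy of the \emph{harmonic} interpolant across a face, not the piecewise-constant one.

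The repair is simpler than your detour and is what the paper's terse line actually encodes. The Kusuoka--Zhou inequality in Proposition~\ref{prop:KZ}(i) is a purely finite-level Poincar\'e-type estimate relating $\tilde P_N f$ to $\tilde P_{N+M}f$; its proof never uses $f\in\mathcal{F}_0$, and it holds for any $f\in L^2(F_\infty,\nu_\infty)$ with $a_{N+M}(f)<\infty$. Once you grant this, apply it directly to any sequence $u_n\rightharpoonup f$: for $r_n\ge N$ one has $a_N(u_n)\le\cc{2.1}\,a_{r_n}(u_n)$, and by the weak lower semicontinuity of $a_N$ that you already noted, $a_N(f)\le\varliminf_n a_N(u_n)\le\cc{2.1}\varliminf_n a_{r_n}(u_n)$. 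Infimizing over such sequences gives $a_N(f)\le\cc{2.1}\cdot(\Gamma_w\text{-}\varliminf_{N'}\mathcal{E}^{\mathcal{I}}_{r_{N'}})(f,f)=\cc{2.1}\,\bar{\mathcal{E}}(f,f)$, and taking $\sup_N$ closes the lower bound. No replacement by $w_n$ is needed.
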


\begin{remark}
We do not, nor need not, assert that $\bar{\mathcal{E}} \in \mathfrak{E}$. One reason is that we cannot check whether $\bar{\mathcal{E}}$ is (strongly) local. In \cite[Theorem 4.4.1]{Mosco94} a sufficient condition was given for a sequence of (strongly) local regular Dirichlet forms on $L^2(X,m)$ to $\Gamma$-converge to a (strongly) local regular Dirichlet form. The condition states that the sequence of \emph{energy measures} be bounded and absolutely continuous with respect to the reference measure $m$ on $X$. However, in the fractal setting, the energy measure and the self-similar measure on the limiting fractal set are mutually singular \cite{HinoSingularity} (see also \cite{Kusuoka89,BenBassat} for the version of this statement on post-critically finite fractals, such as the Sierpinski gasket).

On the other hand, since $(\bar{\mathcal{E}},\bar{\mathcal{F}})$ is a regular Dirichlet form, one has $\bar{\mathcal{E}}=\bar{\mathcal{E}}^{(c)} + \bar{\mathcal{E}}^{(j)} + \bar{\mathcal{E}}^{(k)}$ by the Beurling-Deny formula (\emph{cf.} \cite[Section 3.2]{FOT}), where $\bar{\mathcal{E}}^{(c)}$, $\bar{\mathcal{E}}^{(j)}$, $\bar{\mathcal{E}}^{(k)}$ stands respectively for the diffusion, jump, and killing part of $\bar{\mathcal{E}}$. In particular, $\bar{\mathcal{E}}^{(c)}$ is strongly local, and assuming Proposition \ref{prop:KZ}, one can show that $\bar{\mathcal{E}}$ and $\bar{\mathcal{E}}^{(c)}$ are comparable (\emph{cf.} \cite[\S 2]{KumagaiSturm}).
\end{remark}

\begin{proof}[Proof of Theorem \ref{thm:Moscolimit}]
(i): By \cite[Theorem 1.3]{BB99}, there is a $(\mathcal{E},\mathcal{F}) \in \mathfrak{E}$ associated with a diffusion on $F_\infty$ whose heat kernel satisfies the following estimate: there exist $\cc{1},\cc{2},\cc{3}$ and $\cc{4}$ such that for all $x,y \in F_\infty$ and $t>0$,
$$
\cc{1}t^{-d_h/d_w}\exp\left(-\cc{2}\left(\frac{\|x-y\|^{d_w}}{t}\right)^{\frac{1}{d_w-1}}\right) \leq p_t(x,y) \leq \cc{3}t^{-d_h/d_w}\exp\left(-\cc{4}\left(\frac{\|x-y\|^{d_w}}{t}\right)^{\frac{1}{d_w-1}}\right).
$$
Then by \cite[Theorem 4.1]{KumagaiSturm}, $\mathcal{F} = {\rm Lip}_{\nu_\infty}(\frac{d_w}{2},2,\infty)(F_\infty)$, the Besov-Lipschitz space which consists of all $f\in L^2(F_\infty,\nu_\infty)$ such that
$$
\sup_{N\in\mathbb{N}\cup\{0\}} \alpha^{N(d_w+d_h)}\int\int_{|x-y|<c_0\alpha^{-N}}|f(x)-f(y)|^2 d\nu_{\infty}(x)d\nu_{\infty}(y) < \infty
$$
for some $\alpha>1$ and $c>0$. (See also \cite{KumagaiBesov} for an earlier derivation.) Using the inequality (\ref{eq:KZ}), we deduce that $\mathcal{F}^*=\mathcal{F}$ is a Lipschitz space and hence is dense in $C_c(F)$. Note that this is Assumption (B1) in \cite[\S 3]{KumagaiSturm}.

(ii): By \cite[Proposition 8.10]{DalMaso}, it suffices to show that $L^2(F_\infty,\nu_\infty)$ has a separable dual (clear), and that there exists a function $\xi:L^2(F_\infty,\nu_\infty) \to \mathbb{R}\cup \{+\infty\}$ such that the following two properties hold:
\begin{itemize}
\item $\displaystyle \lim_{\|f\|_{L^2}\to+\infty}\xi(f) = +\infty$
\item $\mathcal{E}_N^\mathcal{I}(\tilde P_N f, \tilde P_N f) \geq \xi(f)$ for all $f\in L^2(F_\infty,\nu_\infty)$ and all sufficiently large $N$.
\end{itemize}
We claim that it is enough to take $\xi(f) = C\mathcal{E}(f,f)$ for some $(\mathcal{E},{\rm dom}(\mathcal{E})) \in \mathfrak{E}$ and constant $C$. To see that the first property holds, note that ${\rm dom}(\mathcal{E})$ is densely defined on $L^2(F_\infty,\nu_\infty)$, so any function $f$ with infinite $L^2$-norm has infinite $\mathcal{E}$-norm. The second property follows from the right inequality in (\ref{eq:KZ}).

As a consequence, $L^2(F_\infty,\nu_\infty)$ equipped with either the strong or the weak topology has a countable base, so by \cite[Theorem 8.5]{DalMaso}, any (sub)sequence of $(\mathcal{E}_N^\mathcal{I})_N$ contains a Mosco- (resp. weak $\Gamma$-) convergent (sub)subsequence. Moreover, $\mathcal{E}_M(f,f) \leq \mathcal{E}^\ast(f,f)<\infty$ for all $f\in \mathcal{F}^\ast$ and for any Mosco limit point $\mathcal{E}_M$.

(iii): From Part (i) we have closability and regularity. Symmetry and the Markovian property can be verified easily.

(iv): Take any Mosco convergent subsequence $(\mathcal{E}_{r_N}^\mathcal{I})_N$ and denote its limit point (resp. the smallest closed extension thereof) by $\mathcal{E}_M$ (resp. $\bar{\mathcal{E}}$). By Proposition \ref{prop:KZ}(i), for all $N\geq 1$ we have
$$
\mathcal{E}_N^\mathcal{I}(\tilde P_N f, \tilde P_N f) \leq \cc{2.1} \cdot \Gamma_w\text{-}\varliminf_{N'\to\infty} \mathcal{E}_{r_{N'}}^\mathcal{I}(\tilde P_{r_{N'}} f, \tilde P_{r_{N'}} f) = \cc{2.1} \cdot \bar{\mathcal{E}}(f,f).
$$
Taking $\sup_N$ on both sides and using Proposition \ref{prop:KZ}(ii) yields $\bar{\mathcal{E}}(f,f) \geq C\mathcal{E}^\ast(f,f) \geq C' \mathcal{E}(f,f)$ for some $\mathcal{E}\in \mathfrak{E}$.
Meanwhile $\bar{\mathcal{E}}(f,f) \leq \mathcal{E}^\ast(f,f) \leq C''\mathcal{E}(f,f)$ via Proposition \ref{prop:KZ}(ii) again. This completes the proof.\end{proof}

\begin{proof}[Proof of Theorem \ref{thm:Greenform}]

By Proposition \ref{prop:MC} and Lemma \ref{lem:resolventconv}, given each Mosco convergent subsequence $(\mathcal{E}_{r_N}^\mathcal{I})_N$ with limit $\mathcal{E}_M$ and its smallest closed extension $\bar{\mathcal{E}}$, we have that for all $\mu \in \mathcal{M}_{0,{\rm ac}}^{(0)}(F_\infty)$,
$$\lim_{N\to\infty} \mathcal{E}_{r_N}^\mathcal{I}\left(U_{r_N}^\mathcal{I} \tilde P_{r_N} \mu, U_{r_N}^\mathcal{I} \tilde P_{r_N} \mu\right) = \bar{\mathcal{E}}\left(\bar{U}\mu, \bar{U}\mu\right),$$
where $\tilde P_{r_N} \mu = \left(\tilde P_{r_N} \frac{d\mu}{d\nu}\right) \nu_{r_N}$.
According to Theorem \ref{thm:Moscolimit}(iv), there exist $(\mathcal{E},\mathcal{F}) \in \mathfrak{E}$ and constants $C(\mathcal{E}),C'(\mathcal{E})$ such that for all $f\in \mathcal{F}$,
$$ C\mathcal{E}(f,f) \leq \bar{\mathcal{E}}(f,f) \leq C' \mathcal{E}(f,f).$$
If $f$ is a $0$-order potential relative to $\mathcal{E}$, then we can write $f=U\mu$ for some $\mu \in S_0^{(0)}$. Using the identity
$$
\mathcal{E}(U\mu, h)=\int \widetilde{h}d\mu = \bar{\mathcal{E}}(\bar{U}\mu, h)\qquad \text{for all}~\mu \in S_0^{(0)}~\text{and}~h\in \mathcal{F}_e,
$$
(where $\widetilde{h}$ is the quasi-continuous modification of $h$; see the Appendix), we then apply the Cauchy-Schwarz inequality to $\mathcal{E}$, and deduce that for all $\mu \in \mathcal{M}_{0,{\rm ac}}^{(0)}(F_\infty)$,
$$
[\bar{\mathcal{E}}(\bar{U}\mu, \bar{U}\mu)]^2 = [\mathcal{E}(U\mu,\bar{U}\mu)]^2 \leq \mathcal{E}(U\mu,U\mu) \mathcal{E}(\bar{U}\mu, \bar{U}\mu) \leq C^{-1} \mathcal{E}(U\mu, U\mu) \bar{\mathcal{E}}(\bar{U}\mu, \bar{U}\mu),
$$
or $\bar{\mathcal{E}}(\bar{U}\mu,\bar{U}\mu) \leq C^{-1} \mathcal{E}(U\mu, U\mu)$. Reversing the role of $\mathcal{E}$ and $\bar{\mathcal{E}}$ gives the opposite inequality, and hence
$$ (C')^{-1} \mathcal{E}(U\mu, U\mu) \leq \bar{\mathcal{E}}(\bar{U}\mu, \bar{U}\mu) \leq C^{-1} \mathcal{E}(U\mu,U\mu).$$
Since this comparison holds when $\bar{\mathcal{E}}$ is the smallest closed extension of either the maximal and minimal cluster point of $(\mathcal{E}_N^\mathcal{I})_N$ in the Mosco topology, we find (\ref{eq:Greenforms}).
\end{proof}

\subsection{Comparison of discrete Dirichlet \& Green forms}

Recall that we are considering the free field on the outer Sierpinski carpet graph $\mathcal{G}_\infty$, while the convergence from discrete Dirichlet forms to the continuum one is based on the inner Sierpinski carpet graph
$\mathcal{I}_\infty$. To bridge this gap, we shall compare the discrete Dirichlet (and Green) forms on
$\mathcal{G}_\infty$ and on $\mathcal{I}_\infty$ in this subsection.

Observe (from Figure \ref{fig:SCGraph}) that for each ``center vertex'' $w \in I_\infty$, there is a unique set $\mathcal{C}(w)$ of $2^d$
``corner vertices'' in $V_\infty$ which are nearest neighbors of $w$, \emph{i.e.,} $\mathcal{C}(w) = \{x \in V_\infty: \|x-w\|=\sqrt{d}/2\}$. Let $\tilde{Q}: C(V_\infty;\mathbb{R}) \to
C(I_\infty;\mathbb{R})$ be the projection operator given by
$$ (\tilde{Q} f)(w) = \frac{1}{2^d} \sum_{x\in \mathcal{C}(w)} f(x).$$
As is customary, we introduce the discrete Dirichlet form on graph $\mathcal{G}_\infty$ by
$$
E_{\mathcal{G}_\infty}(f_1,f_2) = \frac{1}{2} \sum_{\substack{x,x'\in V_\infty\\x\sim x'}}
(f_1(x)-f_1(x'))(f_2(x)-f_2(x'))
$$
for all $f_1,f_2$ in the natural domain $\mathcal{D}(E_{\mathcal{G}_\infty})$. Let $G_{\mathcal{G}_N}: V_\infty\times V_\infty \to \mathbb{R}$ denote the Green's function killed upon exiting $\mathcal{G}_N$.

\begin{lemma}
For all $f\in \mathcal{D}(E_{\mathcal{G}_\infty})$,
\begin{equation}E_{\mathcal{G}_\infty}(f,f) \geq E_{\mathcal{I}_\infty}\left(\tilde{Q} f, \tilde{Q}
f\right).\label{eq:DDFCompare}\end{equation}
It follows that for all nonnegative functions $f$ on $V_N$,
\begin{equation}\sum_{x,x'\in V_N} G_{\mathcal{G}_N}(x,x')f(x)f(x') \leq 2^{4d} \sum_{w,w'\in I_N}
G_{\mathcal{I}_N}(w,w') (\tilde{Q} f)(w) (\tilde{Q} f)(w'). \label{eq:DGFCompare}\end{equation}
\label{lem:Ecomp}
\end{lemma}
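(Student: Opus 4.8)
The plan is to prove the Dirichlet‑form comparison (\ref{eq:DDFCompare}) by a direct, essentially combinatorial, estimate on $\mathcal{G}_\infty$, and then to read off the Green‑form comparison (\ref{eq:DGFCompare}) from it by the usual variational (Dirichlet‑principle) description of the Green form together with an elementary pointwise estimate that exploits the non‑negativity of $f$ and of the Green's function.

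\emph{Proof of (\ref{eq:DDFCompare}).} The starting point is that if $w\sim w'$ in $\mathcal{I}_\infty$, say $w'=w+e_i$, then $w,w'\in I_N$ for some $N$, hence the closed unit cubes centred at $w$ and at $w'$ are constituent cells of $\ell_F^N F_N$; consequently all $2^d$ corner vertices of each of these two cells lie in $V_\infty$, and every edge of $\mathcal{G}_\infty$ joining two corners of a common one of these two cells is present. Split $\mathcal{C}(w)$ into its ``lower face'' $\{x\in\mathcal{C}(w):x_i=w_i-\tfrac12\}$ and ``upper face'' $\{x\in\mathcal{C}(w):x_i=w_i+\tfrac12\}$, and likewise for $\mathcal{C}(w')$; the upper face of $\mathcal{C}(w)$ coincides with the lower face of $\mathcal{C}(w')$. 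Cancelling this shared face in $(\tilde Q f)(w)-(\tilde Q f)(w')$ gives $(\tilde Q f)(w)-(\tilde Q f)(w')=2^{-d}\sum_{x}\bigl(f(x)-f(x+2e_i)\bigr)$, the sum over the $2^{d-1}$ corners $x$ on the lower face of $\mathcal{C}(w)$. Writing $f(x)-f(x+2e_i)=\bigl(f(x)-f(x+e_i)\bigr)+\bigl(f(x+e_i)-f(x+2e_i)\bigr)$ exhibits $(\tilde Q f)(w)-(\tilde Q f)(w')$ as an average of exactly $2^d$ $\mathcal{G}_\infty$‑edge increments, the first kind along edges internal to the ``lower'' cell and the second kind along edges internal to the ``upper'' cell, so convexity of $t\mapsto t^2$ gives $[(\tilde Q f)(w)-(\tilde Q f)(w')]^2\le 2^{-d}\sum(\text{edge increment})^2$. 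Summing over all edges of $\mathcal{I}_\infty$ and interchanging sums, it remains to count, for a fixed edge $e=\{y,y+e_j\}$ of $\mathcal{G}_\infty$, the increments above that equal the increment along $e$: such an occurrence forces $i=j$, and $e$ occurs either in a ``lower‑cell'' role (for the at most $2^{d-1}$ centres $w$ with $\{y,y+e_j\}\subset\mathcal{C}(w)$) or in an ``upper‑cell'' role (for the at most $2^{d-1}$ centres $w$ with $\{y,y+e_j\}\subset\mathcal{C}(w+e_j)$); moreover a single $\mathcal{I}_\infty$‑edge cannot contribute $e$ twice, since the face shared by its two cells contains no edge parallel to $e_i$. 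Hence each $\mathcal{G}_\infty$‑edge is counted at most $2^d$ times, the factor $2^{-d}$ is exactly absorbed, and (\ref{eq:DDFCompare}) follows.

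\emph{Proof of (\ref{eq:DGFCompare}).} Fix $f\ge 0$ on $V_N$ and set $u:=\sum_{x'\in V_N}G_{\mathcal{G}_N}(\cdot,x')f(x')\ge 0$, supported on $V_N$; then $\sum_{x,x'\in V_N}G_{\mathcal{G}_N}(x,x')f(x)f(x')=\langle f,u\rangle_{V_N}=E_{\mathcal{G}_\infty}(u,u)$. Three facts combine. First, by (\ref{eq:DDFCompare}), $E_{\mathcal{G}_\infty}(u,u)\ge E_{\mathcal{I}_\infty}(\tilde Q u,\tilde Q u)$. Second, since every $x\in V_N$ is a corner of at least one (and at most $2^d$) cells of $\ell_F^N F_N$, expanding $\langle\tilde Q f,\tilde Q u\rangle_{I_N}=2^{-2d}\sum_{w\in I_N}\bigl(\sum_{x\in\mathcal{C}(w)}f(x)\bigr)\bigl(\sum_{x'\in\mathcal{C}(w)}u(x')\bigr)$ and retaining only the diagonal terms $x=x'$ yields $\langle f,u\rangle_{V_N}\le 2^{2d}\langle\tilde Q f,\tilde Q u\rangle_{I_N}$. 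Third, the reproducing property $E_{\mathcal{I}_\infty}(G_{\mathcal{I}_N}(w,\cdot),h)=h(w)$ for $h$ supported on $I_N$, together with the Cauchy--Schwarz inequality for the form $E_{\mathcal{I}_\infty}$, gives $\langle\tilde Q f,\tilde Q u\rangle_{I_N}^2\le\bigl(\sum_{w,w'\in I_N}G_{\mathcal{I}_N}(w,w')(\tilde Q f)(w)(\tilde Q f)(w')\bigr)\,E_{\mathcal{I}_\infty}(\tilde Q u,\tilde Q u)$. Chaining these three inequalities, using $E_{\mathcal{I}_\infty}(\tilde Q u,\tilde Q u)\le E_{\mathcal{G}_\infty}(u,u)=\langle f,u\rangle_{V_N}$, and dividing out a power of $\langle f,u\rangle_{V_N}$, one arrives at (\ref{eq:DGFCompare}). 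One small technical point arises in the third step: near $\partial V_N$ the function $\tilde Q u$ need not be supported exactly on $I_N$, because a corner vertex in $V_N$ can be a corner of a cell whose centre lies in $I_{N+1}\setminus I_N$; this is dealt with by working throughout with $\tilde Q u\big|_{I_N}$ (the diagonal expansion in the second step already only involves $w\in I_N$) or by absorbing the boundary contribution via $G_{\mathcal{I}_N}\le G_{\mathcal{I}_{N+1}}$, and does not affect the form of the estimate.

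The one genuinely non‑routine part is the edge‑counting in (\ref{eq:DDFCompare}): one has to organise the lower/upper‑face decomposition so that $(\tilde Q f)(w)-(\tilde Q f)(w')$ appears as an average of \emph{precisely} $2^d$ $\mathcal{G}_\infty$‑edge increments, and then check that no edge of $\mathcal{G}_\infty$ is used more than $2^d$ times over all $\mathcal{I}_\infty$‑edges, the key geometric input being that a face shared by two adjacent cells contains no edge parallel to the direction joining their centres. The remaining ingredients --- the positivity estimate, the Cauchy--Schwarz/reproducing‑kernel step, and the treatment of boundary vertices near $\partial V_N$ --- are standard.
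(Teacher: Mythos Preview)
Your argument is essentially the paper's. For (\ref{eq:DDFCompare}) both of you write $(\tilde Qf)(w)-(\tilde Qf)(w')$ as a $2^{-d}$-weighted sum of $2^d$ edge increments in $\mathcal{G}_\infty$, apply convexity, and count that each $\mathcal{G}_\infty$-edge is used at most $2^d$ times. For (\ref{eq:DGFCompare}) both combine the energy comparison (\ref{eq:DDFCompare}) applied to $u=\sum_{x'}G_{\mathcal{G}_N}(\cdot,x')f(x')$, the diagonal lower bound $\langle\tilde Qf,\tilde Qu\rangle_{I_N}\ge c\,\langle f,u\rangle_{V_N}$, and a Cauchy--Schwarz/reproducing-kernel step; the paper merely writes the last step as expanding $E_{\mathcal{I}_\infty}(f_\alpha-2^{-d}f_\beta,\,f_\alpha-2^{-d}f_\beta)\ge 0$ with $f_\alpha=\sum_w G_{\mathcal{I}_N}(\cdot,w)(\tilde Qf)(w)$ and $f_\beta=\tilde Qu$, which is Cauchy--Schwarz in disguise.

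One caveat: your chain as written yields the constant $2^{4d}$, not $2^{2d}$. With $A=\langle f,u\rangle_{V_N}$ and $B=\sum G_{\mathcal{I}_N}(\tilde Qf)(\tilde Qf)$, your three facts give $A\le 2^{2d}\langle\tilde Qf,\tilde Qu\rangle$ and $\langle\tilde Qf,\tilde Qu\rangle^2\le B\cdot E_{\mathcal{I}_\infty}(\tilde Qu,\tilde Qu)\le BA$, so only $A\le 2^{4d}B$ follows. The paper reaches $2^{2d}$ via the intermediate claim $\sum_{x,x'\in\mathcal C(w)}f(x)h(x')\ge\sum_{x,x'\in\mathcal C(w)}f(x)h(x)$, which is not valid for general nonnegative $f,h$ (take $f=h=\mathbbm 1_{\{x_0\}}$); your diagonal bound with factor $2^{-2d}$ is the correct one. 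Since only the existence of \emph{some} constant is used downstream (Lemma~\ref{lem:VNConv}, Theorem~\ref{thm:LD}), this is harmless, but you should not assert that your chain recovers the stated $2^{2d}$. The boundary issue you flag for the support of $\tilde Qu$ near $\partial I_N$ is also present, and treated equally informally, in the paper's proof.
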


\begin{proof}
Observe that for every $w_1,w_2 \in I_\infty$ with $w_1 \sim w_2$,
$$
(\tilde{Q} f)(w_1) - (\tilde{Q} f)(w_2) = \frac{1}{2^d} \sum_{\substack{x_1\in \mathcal{C}(w_1) \backslash \mathcal{C}(w_2) \\x_2\in
\mathcal{C}(w_2)\backslash \mathcal{C}(w_1)\\ z\in \mathcal{C}(w_1)\cap \mathcal{C}(w_2) \\ x_1\sim z \sim x_2}}
\left[\left(f(x_1)-f(z)\right)+\left(f(z)- f(x_2)\right)\right].
$$
The sum is over the difference of $f$ across $2^d$ edges in $\mathcal{G}_\infty$ which are parallel to the line segment $\overline{w_1 w_2}$, and whose vertices are nearest neighbors of either $w_1$ or $w_2$. Taking the square of both sides, and applying the inequality $(\sum_{k=1}^n
a_k)^2 \leq n(\sum_{k=1}^n a_k^2)$, we obtain
$$
\left[(\tilde{Q} f)(w_1) - (\tilde{Q} f)(w_2)\right]^2 \leq \frac{1}{2^d} \sum_{\substack{x_1\in \mathcal{C}(w_1)
\backslash \mathcal{C}(w_2) \\x_2\in \mathcal{C}(w_2)\backslash \mathcal{C}(w_1)\\ z\in \mathcal{C}(w_1)\cap \mathcal{C}(w_2) \\ x_1\sim z \sim x_2}}
\left[\left(f(x_1)-f(z)\right)^2+\left(f(z)-f(x_2)\right)^2\right].
$$
Upon summing over all $w_1,w_2 \in I_\infty$, we note that each edge in $\mathcal{G}_\infty$ contributes at most
$2^d$ terms to the RHS, that is,
\begin{eqnarray*}
E_{\mathcal{I}_\infty} (\tilde{Q}f, \tilde{Q}f) &=& \frac{1}{2} \sum_{w_1\sim w_2} \left[(\tilde{Q}f)(w_1) -
(\tilde{Q} f)(w_2)\right]^2 \\
&\leq& \frac{1}{2}\cdot \frac{1}{2^d} \cdot 2^d \sum_{\substack{x_1,x_2 \in V_\infty\\ x_1\sim x_2}}
(f(x_1)-f(x_2))^2 ~=~ E_{\mathcal{G}_\infty}(f,f).
\end{eqnarray*}
This proves (\ref{eq:DDFCompare}).

Next we turn to the Green form inequality (\ref{eq:DGFCompare}). Observe that for all nonnegative functions $f$,
$h$ on $V_N$,
\begin{eqnarray*}
\sum_{w\in I_N}(\tilde{Q} f)(w) (\tilde{Q} h)(w) &=& \frac{1}{2^{2d}}\sum_{w\in I_N} \sum_{x,x' \in \mathcal{C}(w)}
f(x)h(x') \\
&\geq& \frac{1}{2^{2d}}\sum_{w\in I_N} \sum_{x \in \mathcal{C}(w)} f(x)h(x) ~\geq~ \frac{1}{2^{2d}} \sum_{x\in V_N} f(x)h(x).
\end{eqnarray*}
By the reproducing property of Green's functions, we deduce that for all $h: V_\infty \to \mathbb{R}_+$ with
support in $V_N$,
$$
E_{\mathcal{I}_\infty}\left(\sum_{w\in I_N} G_{\mathcal{I}_N}(\cdot,w)(\tilde{Q} f)(w), \tilde{Q} h\right) \geq
\frac{1}{2^{2d}} E_{\mathcal{G}_\infty}\left(\sum_{x\in V_N} G_{\mathcal{G}_N}(\cdot,x)f(x),h\right).
$$
Taking $h=\sum_{x \in V_N} G_{\mathcal{G}_N}(\cdot,x)f(x)$ and again applying the reproducing property yields
$$E_{\mathcal{I}_\infty}\left(\sum_{w\in I_N} G_{\mathcal{I}_N}(\cdot, w)(\tilde{Q} f)(w), \sum_{x\in V_N}
\tilde{Q} G_{\mathcal{G}_N}(\cdot, x) f(x)\right) \geq \frac{1}{2^{2d}}\sum_{x,x'\in V_N}G_{\mathcal{G}_N}(x,x')
f(x) f(x').
$$
To simplify the notation, we introduce shorthands for the functions
$$
f_\alpha := \sum_{w\in I_N}G_{\mathcal{I}_N}(\cdot,w)(\tilde{Q} f)(w) \quad\text{and}\quad
f_\beta := \sum_{x\in V_N}\tilde{Q} G_{\mathcal{G}_N}(\cdot,x) f(x),
$$
on $I_N$. It is clear that 
$$E_{\mathcal{I}_\infty}(f_\alpha, f_\alpha) = \sum_{w,w' \in I_N} G_{\mathcal{I}_N}(w,w')(\tilde{Q} f)(w)
(\tilde{Q} f)(w').$$
Meanwhile, by the energy inequality (\ref{eq:DDFCompare}) we just proved,
$$E_{\mathcal{I}_\infty}(f_\beta, f_\beta) \leq E_{\mathcal{G}_\infty} \left(\sum_{x\in V_N}
G_{\mathcal{G}_N}(\cdot, x)f(x), \sum_{x'\in V_N} G_{\mathcal{G}_N}(\cdot,x')f(x')\right) = \sum_{x,x' \in V_N}
G_{\mathcal{G}_N}(x,x')f(x)f(x').$$
So putting everything together,
\begin{eqnarray*}
0 &\leq& E_{\mathcal{I}_\infty}\left(f_\alpha - \frac{1}{2^{2d}} f_\beta, ~f_\alpha -\frac{1}{2^{2d}} f_\beta\right) \\&=& E_{\mathcal{I}_\infty}(f_\alpha, f_\alpha) - \frac{2}{2^{2d}} E_{\mathcal{I}_\infty}\left(f_\alpha,
f_\beta\right) + \frac{1}{2^{4d}} E_{\mathcal{I}_\infty}(f_\beta,f_\beta)\\
&\leq& \sum_{w,w'\in I_N} G_{\mathcal{I}_N}(w,w')(\tilde{Q} f)(w) (\tilde{Q} f)(w') -
\frac{1}{2^{4d}}\sum_{x,x'\in V_N} G_{\mathcal{G}_N}(x,x')f(x)f(x'),
\end{eqnarray*}
which yields (\ref{eq:DGFCompare}).
\end{proof}

\subsection{The main lemma}

In this final subsection, we establish the limsup convergence of discrete Green forms on $\mathcal{G}_\infty$, which will play a crucial role in the main proofs.

Let $G_{\mathcal{G}_N}^\square: V_N \times V_N \to \mathbb{R}$ be the
restriction of $G_{\mathcal{G}_\infty}$ on $V_N \times V_N$; we have added a superscript $\square$ to
distinguish it from the Green's function on $\mathcal{G}_\infty$ killed upon exiting $\mathcal{G}_N$. Let us note that $G_{\mathcal{G}_N}^\square$ and $G_{\mathcal{G}_N}$, regarded as $N$-by-$N$ matrices, are both nonsingular and have matrix inverses. Also
introduce the probability measure $\eta_N:=\frac{1}{|V_N|}\mathbbm{1}_{V_N}$ on $V_N$. Define, for any $h\in \ell^1(V_N;\mathbb{R}_+) \cap \ell^\infty(V_N;\mathbb{R}_+)$,
$$U_N^{\mathcal{G}}\left(h \eta_N\right) := \rho_F^{-N} \frac{1}{|V_N|}\sum_{x\in V_N} G_{\mathcal{G}_N}(\cdot,x)
h(x),\quad U_N^{\mathcal{G} \square}\left(h \eta_N\right) := \rho_F^{-N} \frac{1}{|V_N|}\sum_{x\in V_N}
G_{\mathcal{G}_N}^\square(\cdot,x) h(x).$$
Writing $\mathcal{E}_N^{\mathcal{G}}= \rho_F^N E_{\mathcal{G}_\infty}$ for the
renormalized discrete Dirichlet form on $\mathcal{G}_\infty$, we have
$$
\mathcal{E}_N^\mathcal{G}\left(U_N^\mathcal{G} \left(h \eta_N\right) , U_N^\mathcal{G} \left(h \eta_N\right)
\right) =\rho_F^{-N} \frac{1}{|V_N|^2}\sum_{x,x'\in V_N} G_{\mathcal{G}_N}(x,x') h(x)h(x')
$$
by the reproducing property of $G_{\mathcal{G}_N}$. Meanwhile, let us abuse notation slightly and introduce the
quadratic form
$$
\mathcal{E}_N^\mathcal{G}(U_N^{\mathcal{G} \square} (h\eta_N), U_N^{\mathcal{G} \square} (h\eta_N)) :=\rho_F^{-N}
\frac{1}{|V_N|^2}\sum_{x,x'\in V_N} G_{\mathcal{G}_N}^\square(x,x') h(x)h(x'),
$$
as it is suggestive of another Green form. 

Recall from \S\ref{sec:KZ} that $(\mathcal{E}, \mathcal{F}_0)$ is a regular Dirichlet form on $L^2(F_\infty,\nu_\infty)$. Therefore $\mathcal{E}$ possesses a core $\mathcal{C}$ which is $\mathcal{E}_1$-dense in $\mathcal{F}_0$ and sup-norm-dense in $C_c(F_\infty)$. In particular, $\mathcal{C} =\mathcal{F}_0 \cap C_c(F_\infty)$ is a (special standard) core of $\mathcal{E}$ \cite{FOT}. 

\begin{lemma}[The main lemma]
For every $h \in L^1(F_\infty,\nu_\infty) \cap \mathcal{F}_0$ with $h\geq 0$, define $h_N: V_N \to \mathbb{R}$ by $h_N(\cdot) = h(\ell_F^{-N} \cdot)$. Then
the following hold:
\begin{enumerate}[label={(\roman*)}]
\item $\displaystyle \varlimsup_{N\to\infty} \mathcal{E}_N^\mathcal{G}(U_N^{\mathcal{G} \square} (h_N \eta_N)
,U_N^{\mathcal{G} \square} (h_N \eta_N)) = \varlimsup_{N\to\infty} \mathcal{E}_N^\mathcal{G}(U_N^\mathcal{G} (h_N
\eta_N) ,U_N^\mathcal{G} (h_N \eta_N))$.
\end{enumerate}
For some $(\mathcal{E},\mathcal{F}) \in \mathfrak{E}$:
\begin{enumerate}[label={(\roman*)}]
\setcounter{enumi}{1} 
\item There exists a constant $\cc{2.6}$ such that 
$$
\varlimsup_{N\to\infty} \mathcal{E}_N^\mathcal{G}\left(U_N^{\mathcal{G} \square}
\left(\left(\frac{d\mu}{d\nu}\right)_N \eta_N\right),U_N^{\mathcal{G} \square} \left(\left(\frac{d\mu}{d\nu}\right)_N
\eta_N\right)\right) \leq \cc{2.6} \mathcal{E}(U\mu, U\mu)$$
for all $\mu \in \mathcal{M}_{0,{\rm ac}}^{(0)}(F)$.
\item There exists a constant $\cc{2.7}$ such that
$$ \varlimsup_{N\to\infty}\rho_F^N\left\langle \mathbbm{1}_{V_N}, \sum_{x\in
V_N}(G_{\mathcal{G}_N}^\square)^{-1}(\cdot,x) \mathbbm{1}_{V_N}(x) \right\rangle_{V_N} \leq \cc{2.7} {\rm
Cap}_{\mathcal{E}}(F),$$ 
where $\left(G_{\mathcal{G}_N}^\square\right)^{-1}$ denotes the matrix inverse of
$G_{\mathcal{G}_N}^\square$, and ${\rm Cap}_\mathcal{E}(F)$ denotes the $0$-capacity of $F$ with respect to $\mathcal{E}$.
\end{enumerate}
\label{lem:VNConv}
\end{lemma}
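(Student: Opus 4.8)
The plan is to establish (i) first, then deduce (ii) from (i) together with Lemma~\ref{lem:Ecomp} and Theorem~\ref{thm:Greenform}, and to prove (iii) directly by a test-function (recovery-sequence) argument.

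\emph{Part (i).} Since killing only decreases the Green's function, $G_{\mathcal{G}_N}\le G_{\mathcal{G}_N}^\square$ both entrywise and in the sense of quadratic forms on $V_N$ (domain monotonicity of Dirichlet resolvents), so the $\varlimsup$ for $U_N^\mathcal{G}$ is $\le$ the one for $U_N^{\mathcal{G}\square}$. For the reverse I would use the first-exit decomposition
$$G_{\mathcal{G}_N}^\square(x,x')-G_{\mathcal{G}_N}(x,x')=\mathbb{E}_x\!\left[G_{\mathcal{G}_\infty}\!\left(X_{\tau},x'\right)\right],\qquad x,x'\in V_N,$$
where $\tau$ is the first exit time of simple random walk from $V_N$ (a.s.\ finite by transience) and $X_\tau$ lies in the outer vertex boundary $\partial V_N$. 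It then suffices to show that the normalized correction $\rho_F^{-N}|V_N|^{-2}\sum_{x,x'\in V_N}\mathbb{E}_x[G_{\mathcal{G}_\infty}(X_\tau,x')]h_N(x)h_N(x')$ tends to $0$; bounding $|h_N|\le\|h\|_{L^\infty(F,\nu)}$ and expanding through harmonic measure, it is at most a constant times $\rho_F^{-N}|V_N|^{-2}\sum_{y\in\partial V_N}\big(\sum_{x\in V_N}\mathbb{P}_x[X_\tau=y]\big)\big(\sum_{x'\in V_N}G_{\mathcal{G}_\infty}(y,x')\big)$. The two inner sums are controlled by the elliptic Harnack inequality and the two-sided (sub-Gaussian) Green's function estimates for $\mathcal{G}_\infty$ of Barlow--Bass \cite{BBSCGraph}; the decisive point is that, after the rescaling $x\mapsto\ell_F^{-N}x$, the boundary $\partial V_N$ is a cross-sectional generalized Sierpinski carpet of Hausdorff dimension $d_h'<d_h$ and cardinality $\asymp\ell_F^{Nd_h'}$, which yields a strictly negative power of $\ell_F^N$ after the normalization by $\rho_F^{-N}|V_N|^{-2}$. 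I expect this decay estimate to be the hardest part of the lemma.

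\emph{Part (ii).} By (i) I may replace $G_{\mathcal{G}_N}^\square$ by $G_{\mathcal{G}_N}$ inside the $\varlimsup$. Since $d\mu/d\nu\ge0$, the Green-form comparison (\ref{eq:DGFCompare}) of Lemma~\ref{lem:Ecomp}, applied to $f=(d\mu/d\nu)_N$, combined with (\ref{eq:Greenformdef}) and $\cc{1.1}m_F^N\le|V_N|\le\cc{1.2}m_F^N$, gives
$$\mathcal{E}_N^\mathcal{G}\!\left(U_N^\mathcal{G}((d\mu/d\nu)_N\eta_N),\,\cdot\right)\ \le\ \frac{2^{2d}m_F^{2N}}{|V_N|^2}\,\mathcal{E}_N^\mathcal{I}\!\left(U_N^\mathcal{I}\sigma_N,\,U_N^\mathcal{I}\sigma_N\right),$$
where $\sigma_N$ is the measure on $I_N$ with $d\sigma_N/d\nu_N=\tilde Q(d\mu/d\nu)_N$. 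Now $\tilde Q(d\mu/d\nu)_N(w)$ is the average of $d\mu/d\nu$ over the $2^d$ corners of the cell $\Psi^{(N)}_w$, while $d(\tilde P_N\mu)/d\nu_N(w)=(\tilde P_N(d\mu/d\nu))(w)$ is its $\nu_\infty$-average over $\Psi^{(N)}_w\cap F_\infty$; these differ by an amount that is uniformly small at the cell scale. Using that $G_{\mathcal{I}_N}$ is positive semidefinite with $\sum_{w,w'}G_{\mathcal{I}_N}(w,w')\asymp m_F^N t_F^N$, a triangle-inequality argument in the Green-form seminorm then yields $\mathcal{E}_N^\mathcal{I}(U_N^\mathcal{I}\sigma_N,\cdot)\le 2\,\mathcal{E}_N^\mathcal{I}(U_N^\mathcal{I}\tilde P_N\mu,\cdot)+o(1)$ (first for continuous $d\mu/d\nu$, then in general by density of $\mathcal{F}_0\cap C_c(F_\infty)$ in $\mathcal{F}_0$ and continuity of the Green forms). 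Theorem~\ref{thm:Greenform} bounds $\varlimsup_N\mathcal{E}_N^\mathcal{I}(U_N^\mathcal{I}\tilde P_N\mu,\cdot)\le\cc{2.5}\,\mathcal{E}(U\mu,U\mu)$, and (ii) follows with $\cc{2.6}$ absorbing $2^{2d}$, the factor $2$, and $\cc{2.5}$.

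\emph{Part (iii).} Since $\mathcal{G}_\infty$ has uniformly bounded vertex degrees and the equilibrium measure $(G_{\mathcal{G}_N}^\square)^{-1}\mathbbm 1_{V_N}$ is nonnegative, the left-hand quantity is comparable to the discrete capacity $\mathrm{Cap}_{\mathcal{G}_\infty}(V_N):=\inf\{E_{\mathcal{G}_\infty}(f,f):f:V_\infty\to\mathbb{R},\ f\ge1\text{ on }V_N,\ f\text{ of finite support}\}$. It therefore suffices to exhibit, for each $\epsilon>0$, a finitely supported $f_N:V_\infty\to\mathbb{R}$ with $f_N\ge1$ on $V_N$ and $\varlimsup_N\rho_F^N E_{\mathcal{G}_\infty}(f_N,f_N)\le C\big(\mathrm{Cap}_{\mathcal{E}}(F)+\epsilon\big)$. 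Choose $\phi\in\mathcal{F}_0\cap C_c(F_\infty)$ with $\phi\ge1$ on $F$ and $\mathcal{E}(\phi,\phi)\le\mathrm{Cap}_{\mathcal{E}}(F)+\epsilon$, put $g_N:=\tilde P_N\phi$ on $I_\infty$ (finitely supported because $\phi$ has compact support), and set $f_N(x):=\max\{g_N(w):w\in I_\infty,\ x\in\mathcal{C}(w)\}$ for $x\in V_\infty$. For $x\in V_N$, at least one unit cube of $\ell_F^N F_N$ having $x$ as a corner has its center $w_0$ in $I_N$, and then $\Psi^{(N)}_{w_0}\cap F_\infty$ is a level-$N$ copy of $F$ on which $\phi\ge1$, so $g_N(w_0)=(\tilde P_N\phi)(w_0)\ge1$ and hence $f_N(x)\ge1$. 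The bounded geometry and non-diagonality \ref{cond:GSC3} of the carpet graphs guarantee that center vertices adjacent to neighboring corner vertices are joined in $\mathcal{I}_\infty$ by paths of bounded length, whence $E_{\mathcal{G}_\infty}(f_N,f_N)\le C_d\,E_{\mathcal{I}_\infty}(g_N,g_N)$. Multiplying by $\rho_F^N$ and using the left inequality in (\ref{eq:KZ}), $\rho_F^N E_{\mathcal{G}_\infty}(f_N,f_N)\le C_d\,\mathcal{E}_N^\mathcal{I}(\tilde P_N\phi,\tilde P_N\phi)\le C_d\,\cc{2.2}^{-1}\mathcal{E}(\phi,\phi)\le C_d\,\cc{2.2}^{-1}\big(\mathrm{Cap}_{\mathcal{E}}(F)+\epsilon\big)$. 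Letting $N\to\infty$ and then $\epsilon\to0$ gives (iii). In summary, the only genuinely hard step is the decay estimate in Part (i)---on $\mathbb{Z}^d$ one never confronts the difference between the killed and the full Green's functions because translation invariance lets the argument proceed directly, whereas here it must be bled off quantitatively, using both the Barlow--Bass estimates and the geometric fact that the boundary of the approximating box has strictly smaller dimension than the box. The rest is soft, modulo routine checks that the averaging/sampling operators $\tilde P_N$, $\tilde Q$ and the extension $\mathcal{I}_\infty\to V_\infty$ behave compatibly with the renormalized energies.
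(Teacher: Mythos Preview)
Your approaches to Parts~(ii) and~(iii) are sound and, in the case of~(iii), genuinely different from the paper's. The paper proves~(iii) by first using a Cauchy--Schwarz argument to pass from $(G_{\mathcal{G}_N}^\square)^{-1}$ to $(G_{\mathcal{G}_N})^{-1}$, then comparing the resulting equilibrium measure on $V_N$ with the one on $I_N$ via the projection $\tilde Q$ and Proposition~\ref{prop:capacity}(iv). Your recovery-sequence argument is more direct and arguably cleaner; the identification of $\langle\mathbbm{1},(G^\square)^{-1}\mathbbm{1}\rangle$ with the discrete capacity is exact (not merely ``comparable''), since $(G^\square)^{-1}\mathbbm{1}$ \emph{is} the equilibrium measure of $V_N$ in $\mathcal{G}_\infty$. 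The only point requiring care is the reverse energy comparison $E_{\mathcal{G}_\infty}(f_N,f_N)\le C_d\,E_{\mathcal{I}_\infty}(g_N,g_N)$ for your max-extension $f_N$, which Lemma~\ref{lem:Ecomp} does not supply; your bounded-path argument via non-diagonality is plausible but needs to handle the case where the cube adjacent to $w^*$ in the relevant direction is absent from the carpet.

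The real gap is in Part~(i). Your boundary-dimension heuristic does not yield decay. Writing out your bound with the Green's function estimate gives
\[
\rho_F^{-N}|V_N|^{-2}\sum_{y\in\partial V_N}\Big(\sum_{x\in V_N}\mathbb{P}_x[X_\tau=y]\Big)\Big(\sum_{x'\in V_N}G_{\mathcal{G}_\infty}(y,x')\Big).
\]
The second inner sum is $\asymp t_F^N$ uniformly in $y\in\partial V_N$ (it is the expected occupation time of $V_N$ for the walk started at the boundary, and the sub-Gaussian estimates give this order). The first inner sum, summed over $y$, equals $|V_N|$ identically; the lower dimension of $\partial V_N$ is exactly compensated by each boundary point absorbing proportionally more harmonic measure, so that on average $\sum_x\mathbb{P}_x[X_\tau=y]\asymp |V_N|/|\partial V_N|\to\infty$. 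Plugging in, the whole expression is $\asymp\rho_F^{-N}m_F^{-2N}\cdot m_F^N\cdot t_F^N=1$, which is $O(1)$, not $o(1)$. No uniform Harnack bound on the harmonic measure can beat the mass constraint.

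The paper's argument for~(i) is entirely different and does not go through the boundary at all. It writes the difference as $\epsilon_N(x,x')\cdot d_{\mathcal{G}_\infty}(x,x')^{d_w-d_h}$ where $\epsilon_N:=(G_{\mathcal{G}_\infty}-G_{\mathcal{G}_N})/d^{d_w-d_h}$ is uniformly bounded (by the two-sided Barlow--Bass estimate) and tends to $0$ pointwise (by the monotone convergence $G_{\mathcal{G}_N}\uparrow G_{\mathcal{G}_\infty}$). After rescaling, the sum becomes an integral against the self-similar measure with kernel $\|y-y'\|^{-(d_h-d_w)}$, which is $\nu\times\nu$-integrable since $d_h-d_w<d_h$. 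A reverse Fatou lemma for weakly converging measures then gives the limit $0$. The mechanism is dominated convergence, not a quantitative boundary estimate; the ``smallness'' comes from $\epsilon_N\to 0$ pointwise, which your exit decomposition obscures rather than exploits.
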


For some general facts about the $0$-order capacity (which will be used in the proof of Part (iii)), please see the Appendix.

\begin{proof}
(i): We need two facts. The first is the observation that $\displaystyle \uparrow
\lim_{N\to\infty}G_{\mathcal{G}_N}(x,x') = G_{\mathcal{G}_\infty}(x,x')$ for all $x,x' \in V_\infty$, since the
first exit time from $\mathcal{G}_N$ increases unboundedly with $N$. The second is the following two-sided Green's function estimate on
$\mathcal{G}_\infty$, proved in \cite[Theorem 5.3]{BBSCGraph}: there exist constants $\cc{5}$, $\cc{6}$ such that for
all $x,x' \in V_\infty$ with $d_{\mathcal{G}_\infty}(x,x')\geq 1$,
$$
\cc{5} \cdot d_{\mathcal{G}_\infty}(x,x')^{d_w-d_h} \leq G_{\mathcal{G}_\infty}(x,x') \leq \cc{6}\cdot  d_{\mathcal{G}_\infty}(x,x')^{d_w-d_h}.
$$
Note that $d_w-d_h<0$ as $\mathcal{G}_\infty$ is a transient Sierpinski carpet graph. Therefore
$$
\epsilon_N(x,x'):=\frac{G_{\mathcal{G}_\infty}(x,x') - G_{\mathcal{G}_N}(x,x')}{d_{\mathcal{G}_\infty}(x,x')^{d_w-d_h}}
$$
is bounded above by $\cc{6}$ (and bounded below by $0$), and $\displaystyle \lim_{N\to\infty} \epsilon_N(x,x')=0$ pointwise. 

Let us take $h\geq 0$, $h\in L^1(F_\infty,\nu_\infty) \cap \mathcal{C}$, where $\mathcal{C}=\mathcal{F}_0 \cap C_c(F_\infty)$ is a core of the Dirichlet form $(\mathcal{E},\mathcal{F}_0)$. When estimating the difference between the two sides of the equation in Part (i), we get
\begin{eqnarray}
\nonumber 0 &\leq& \varlimsup_{N\to\infty}\left[\mathcal{E}_N^\mathcal{G}(U_N^{\mathcal{G} \square} (h_N \eta_N),
U_N^{\mathcal{G} \square} (h_N \eta_N)) - \mathcal{E}_N^\mathcal{G}(U_N^\mathcal{G} (h_N \eta_N), U_N^\mathcal{G} (h_N
\eta_N))\right]\\
\nonumber &=&\varlimsup_{N\to\infty}\rho_F^{-N} \frac{1}{|V_N|^2}\sum_{x,x'\in V_N}
\left[G_{\mathcal{G}_\infty}(x,x')-G_{\mathcal{G}_N}(x,x')\right] h_N(x)h_N(x') \\
\nonumber &=& \varlimsup_{N\to\infty}\rho_F^{-N} \frac{1}{|V_N|^2} \sum_{x,x'\in
V_N} \epsilon_N(x,x') \cdot d_{\mathcal{G}_\infty}(x,x')^{d_w-d_h}h_N(x)h_N(x')\\
\nonumber &\leq& C \varlimsup_{N\to\infty}\rho_F^{-N} \frac{1}{|V_N|^2} \sum_{y,y' \in
\ell_F^{-N} V_N} \rho_F^N \epsilon_N(\ell_F^N y, \ell_F^N y') \|y-y'\|^{d_w-d_h} h(y)h(y')\\
\label{eq:intermediate} &=& C\varlimsup_{N\to\infty} \int_{F\times F} \epsilon_N(\ell_F^N y, \ell_F^N y') \frac{h(y)h(y') d\mathfrak{m}_N(y) d\mathfrak{m}_N(y)}{\|y-y'\|^{d_h-d_w}},
\end{eqnarray}
where $\displaystyle \mathfrak{m}_N =\frac{1}{|V_N|}\mathbbm{1}_{\ell_F^{-N} V_N}$ is a probability measure on $F$, and $\mathfrak{m}_N$ converges weakly to $\nu$. Now
$$
\int_{F\times F} \frac{h(y)h(y') d\nu(y) d\nu(y')}{\|y-y'\|^{d_h-d_w}} \leq \|h\|^2_\infty \int_{F\times F}\frac{d\nu(y)d\nu(y')}{\|y-y'\|^{d_h-d_w}} < \infty,
$$
where we use a fact from geometric measure theory (see \emph{e.g.} \cite[Ch. 8]{Mattila}) that since $\nu$ is a $d_h$-dimensional Hausdorff measure with respect to the
Euclidean norm $\|\cdot\|$ on the compact metric space $(F,\|\cdot\|)$,
$$
\int_{F\times F} \frac{d\nu(y) d\nu(y')}{\|y-y'\|^\alpha}<\infty
$$
for any $\alpha<d_h$. We also have $\|h\|_\infty <\infty$ since $h \in C_c(F_\infty)$. So by the reverse Fatou's lemma for weakly converging measures (\emph{cf.} \cite{Serfozo,Schal}; see also \cite[Theorem 1.1]{FatouWeak} for the statement and proof), the RHS of (\ref{eq:intermediate}) is bounded above by
$$
C \int_{F\times F} \varlimsup_{\substack{N\to\infty\\ y_1, y_2 \in \ell_F^{-N} V_N \\y_1\to y, y_2\to y'}} \left(\frac{\epsilon_N(\ell_F^N y_1, \ell_F^N y_2)h(y_1) h(y_2)}{\|y_1-y_2\|^{d_h-d_w}} \right)d\nu(y) d\nu(y')=0.
$$
By a density argument this estimate extends to all $h\geq 0$ with $h \in L^1(F_\infty,\nu_\infty) \cap \mathcal{F}_0$. This proves (i).

(ii): By Part (i) and then (\ref{eq:DGFCompare}), there exists a constant $C(d)$ such that
\begin{eqnarray*}
\varlimsup_{N\to\infty}\mathcal{E}_N^\mathcal{G}\left(U_N^{\mathcal{G} \square}\left(h_N
\eta_N\right),U_N^{\mathcal{G} \square}\left(h_N \eta_N\right)\right) &=& \varlimsup_{N\to\infty}
\mathcal{E}_N^\mathcal{G}\left(U_N^{\mathcal{G}}\left(h_N \eta_N\right),U_N^{\mathcal{G}}\left(h_N
\eta_N\right)\right)\\
&\leq& C \varlimsup_{N\to\infty}\mathcal{E}_N^\mathcal{I}\left(U_N^\mathcal{I} \left(\tilde{Q} h_N
\nu_N\right), U_N^\mathcal{I}\left(\tilde{Q} h_N \nu_N\right)\right).
\end{eqnarray*}
for all $h\geq 0$ with $h\in L^1(F_\infty,\nu_\infty)\cap \mathcal{F}_0$. We claim that $\tilde Q h_N$ can be replaced by $\tilde P_N h$ in the above inequality. Indeed, if $h \in L^1(F_\infty,\nu_\infty) \cap \mathcal{F}_0 \cap C_c(F_\infty)$, then by continuity
\begin{eqnarray*}
\lim_{N\to\infty}\|\tilde Q h_N - \tilde P_N h\|_{L^1(I_N,\nu_N)} &\leq&\lim_{N\to\infty} \frac{1}{m_F^N}\sum_{w\in I_N}\frac{1}{\left|\nu(\Psi^{(N)}_w \cap F)\right|} \int_{\Psi^{(N)}_w \cap F} \left|h(y) - \frac{1}{2^d}\sum_{x\in \mathcal{C}(w)} h\left(\frac{x}{\ell_F^N}\right)\right| d\nu(y) \\
&\leq& \lim_{N\to\infty}\sup_{w\in I_N} \sup_{y \in \Psi^{(N)}_w \cap F} \left|h(y) - \frac{1}{2^d}\sum_{x\in \mathcal{C}(w)} h\left(\frac{x}{\ell_F^N}\right)\right|~=~0.
\end{eqnarray*}
Since $\mathcal{F}_0 \cap C_c(F_\infty)$ is a core of $\mathcal{E}$, this result extends to all $h\in L^1(F_\infty,\nu_\infty) \cap \mathcal{F}_0$ by a density argument. Hence
\begin{eqnarray*}
&&  \varlimsup_{N\to\infty}\mathcal{E}_N^\mathcal{I}\left(U_N^\mathcal{I} \left(\tilde{Q} h_N
\nu_N\right), U_N^\mathcal{I}\left(\tilde{Q} h_N \nu_N\right)\right) ~=~ \varlimsup_{N\to\infty}\left\langle U_N^\mathcal{I} \left(\tilde{Q} h_N
\nu_N\right), \tilde{Q} h_N \nu_N\right\rangle_{\mathcal{I}_N} \\ &\leq& \varlimsup_{N\to\infty} \left\langle U_N^\mathcal{I} \left((\tilde{Q} h_N
+\tilde P_N h) \nu_N\right), (\tilde{Q} h_N - \tilde P_N h)\nu_N \right\rangle_{\mathcal{I}_N} + \varlimsup_{N\to\infty} \left\langle U_N^\mathcal{I} \left(\tilde{P}_N h
\nu_N\right), \tilde{P}_N h \nu_N\right\rangle_{\mathcal{I}_N}\\
&=& \varlimsup_{N\to\infty} \left\langle U_N^\mathcal{I} \left(\tilde{P}_N h
\nu_N\right), \tilde{P}_N h \nu_N\right\rangle_{\mathcal{I}_N} ~=~  \varlimsup_{N\to\infty}\mathcal{E}_N^\mathcal{I}\left(U_N^\mathcal{I} \left(\tilde P_N h
\nu_N\right), U_N^\mathcal{I}\left(\tilde P_N h \nu_N\right)\right).
\end{eqnarray*}

Now put $h=\frac{d\mu}{d\nu}$ for some $\mu\in \mathcal{M}_{0,{\rm ac}}^{(0)}(F_\infty)$. It follows from the preceding discussions and Theorem \ref{thm:Greenform} that
$$
\varlimsup_{N\to\infty} \mathcal{E}_N^\mathcal{G}\left(U_N^{\mathcal{G} \square}
\left(\left(\frac{d\mu}{d\nu}\right)_N \eta_N\right),U_N^{\mathcal{G} \square} \left(\left(\frac{d\mu}{d\nu}\right)_N
\eta_N\right)\right) \leq C \varlimsup_{N\to\infty} \mathcal{E}_N^{\mathcal{I}}\left(U_N^{\mathcal{I}}(\tilde P_N \mu),
U_N^{\mathcal{I}}(\tilde P_N \mu)\right) \leq C' \mathcal{E}(U\mu,U\mu),
$$
where $C'=C \cc{2.5}$.

(iii): We recognize that for all $h: V_N\to\mathbb{R}_+$,
\begin{equation}
\left\langle \rho_F^{-N}\sum_{x\in V_N}G_{\mathcal{G}_N}^\square(\cdot,x)(h\eta_N)(x), h\eta_N \right \rangle_{V_N} \geq \left\langle \rho_F^{-N}\sum_{x\in V_N}G_{\mathcal{G}_N}(\cdot,x)(h\eta_N)(x), h\eta_N \right \rangle_{V_N}.
\label{eq:GreenIneq}
\end{equation}
Fixing $f : V_N \to\mathbb{R}$, we let $\displaystyle \eta = \rho_F^N\sum_{x\in V_N} (G_{\mathcal{G}_N})^{-1}(\cdot,x)f(x)$ and $\displaystyle \eta^\square = \rho_F^N\sum_{x\in V_N} (G_{\mathcal{G}_N}^\square)^{-1}(\cdot,x)f(x)$. (Note that the matrix inverses are well-defined.) Then upon applying the Cauchy-Schwarz inequality and (\ref{eq:GreenIneq}), we find
\begin{eqnarray*}
\left\langle f, \rho_F^N \sum_{x\in V_N}(G_{\mathcal{G}_N}^\square)^{-1}(\cdot,x) f(x)\right\rangle_{V_N}^2 &=& \left\langle \rho_F^{-N}\sum_{x\in V_N}G_{\mathcal{G}_N}(\cdot,x)\eta(x), \eta^o\right\rangle_{V_N}^2 \\
&\leq& \left\langle \rho_F^{-N} \sum_{x\in V_N} G_{\mathcal{G}_N}(\cdot,x)\eta(x), \eta\right\rangle_{V_N} \left\langle \rho_F^{-N}\sum_{x\in V_N}G_{\mathcal{G}_N}(\cdot,x)\eta^o(x), \eta^o\right\rangle_{V_N}\\
&\leq& \left\langle \rho_F^{-N} \sum_{x\in V_N} G_{\mathcal{G}_N}(\cdot,x)\eta(x), \eta\right\rangle_{V_N} \left\langle \rho_F^{-N}\sum_{x\in V_N}G_{\mathcal{G}_N}^\square(\cdot,x)\eta^o(x), \eta^o\right\rangle_{V_N}\\
&=& \left\langle f, \rho_F^N \sum_{x\in V_N}(G_{\mathcal{G}_N})^{-1}(\cdot,x)f(x)\right\rangle_{V_N} \left\langle f, \rho_F^N \sum_{x\in V_N}(G_{\mathcal{G}_N}^\square)^{-1}(\cdot,x)f(x)\right\rangle_{V_N}.
\end{eqnarray*}
Hence for all $f: V_N \to \mathbb{R}$,
$$
\left\langle f,\rho_F^N\sum_{x\in V_N}(G_{\mathcal{G}_N}^\square)^{-1}(\cdot,x)f(x)\right\rangle_{V_N} \leq \left\langle f,\rho_F^N\sum_{x\in V_N}(G_{\mathcal{G}_N})^{-1}(\cdot,x)f(x)\right\rangle_{V_N}.
$$
In particular,
\begin{eqnarray}
\label{eq:ineq1} \qquad \varlimsup_{N\to\infty}\rho_F^N \left\langle \mathbbm{1}_{V_N}, \sum_{x\in V_N}(G_{\mathcal{G}_N}^\square)^{-1}(\cdot,x) \mathbbm{1}_{V_N}(x)\right\rangle_{V_N} &\leq& \varlimsup_{N\to\infty}\rho_F^N \left\langle \mathbbm{1}_{V_N}, \sum_{x\in V_N}(G_{\mathcal{G}_N})^{-1}(\cdot,x) \mathbbm{1}_{V_N}(x)\right\rangle_{V_N}\\
\nonumber &=& \varlimsup_{N\to\infty}\langle \mathbbm{1}_{V_N}, \mu_{V_N}\rangle_{V_N},
\end{eqnarray}
where $\mu_{V_N}$ is the equilibrium measure on $V_N$ with respect to $\mathcal{E}_N^\mathcal{G}$. (The equilibrium measure and the equilibrium potential of a set $B$ with respect to the Dirichlet form $\mathcal{E}$ is defined in Proposition \ref{prop:capacity} in the Appendix.) The equality in the second line is just a direct calculation by
$$  U_N^\mathcal{G} \mu_{V_N} = \rho_F^{-N} \frac{1}{|V_N|} \sum_{x\in V_N} G_{\mathcal{G}_N}(\cdot,x) \frac{d\mu_{V_N}}{d\eta_N}(x) =1 \quad \text{on}~V_N,$$
\emph{cf.} Proposition \ref{prop:capacity}(ii). From this it also follows that $\rho_F^{-N} \mu_{V_N}(V_N) \leq \underline{G}^{-1}$, and
\begin{eqnarray*}
U_N^\mathcal{I} \tilde Q \mu_{V_N} &=& \rho_F^{-N} \frac{1}{m_F^N} \sum_{w\in I_N} G_{\mathcal{I}_N}(\cdot, w) \frac{1}{2^d}\sum_{x\in \mathcal{C}(w)}\left(\frac{d\mu_{V_N}}{d\eta_N}\right)(x)\\
&\leq& \frac{|V_N|}{m_F^N} \|G_{\mathcal{I}_\infty}\|_\infty \rho_F^{-N} \mu_{V_N}(V_N) ~\leq ~ \frac{|V_N|}{m_F^N} \frac{\|G_{\mathcal{I}_\infty}\|_\infty}{\underline{G}} \quad \text{on}~I_N.
\end{eqnarray*}
So if we let $\hat\mu_{V_N} := \left( \frac{|V_N|}{m_F^N} \frac{\|G_{\mathcal{I}_\infty}\|_\infty}{\underline{G}} \right)^{-1}\mu_{V_N}$, then $U_N^\mathcal{I} \tilde Q \hat\mu_{V_N} \leq 1$ on $I_N$. Hence by Proposition \ref{prop:capacity}(iv),
\begin{eqnarray*}
\langle \mathbbm{1}_{V_N}, \mu_{V_N}\rangle_{V_N} &\leq& 2^d \frac{m_F^N}{|V_N|}\langle \mathbbm{1}_{I_N}, \tilde Q \mu_{V_N} \rangle_{I_N} 
~=~ 2^d \frac{\|G_{\mathcal{I}_\infty}\|_\infty}{\underline{G}} \langle \mathbbm{1}_{I_N}, \tilde{Q}\hat\mu_{V_N}\rangle_{I_N} \\
&\leq& 2^d \frac{\|G_{\mathcal{I}_\infty}\|_\infty}{\underline{G}} \langle \mathbbm{1}_{I_N}, \mu_{I_N}\rangle_{I_N}  ~=~ 2^d \frac{\|G_{\mathcal{I}_\infty}\|_\infty}{\underline{G}} \mathcal{E}_N^\mathcal{I}(e_{I_N}, e_{I_N}),
\end{eqnarray*}
where $\mu_{I_N}$ and $e_{I_N}$ are, respectively, the equilibrium measure and equilibrium potential of $I_N$ with respect to $\mathcal{E}_N^\mathcal{I}$. Putting $\cc{2.8}:=2^d \frac{\|G_{\mathcal{I}_\infty}\|_\infty}{\underline{G}}$, and applying Proposition \ref{prop:capacity}(i) and Proposition \ref{prop:KZ}(ii), we find
\begin{equation}
\varlimsup_{N\to\infty}\langle \mathbbm{1}_{V_N},\mu_{V_N}\rangle_{V_N} \leq \cc{2.8}\varlimsup_{N\to\infty} \mathcal{E}_N^\mathcal{I}\left(e_{I_N},e_{I_N}\right)\leq \cc{2.8}\varlimsup_{N\to\infty} \mathcal{E}_N^\mathcal{I}\left(\tilde P_N e_F, \tilde P_N e_F\right) \leq \cc{2.8}\cc{2.2}^{-1} \mathcal{E}(e_F, e_F).
\label{eq:ineq2}
\end{equation}
Inequalities (\ref{eq:ineq1}) and (\ref{eq:ineq2}) together imply the result.
\end{proof}

\section{Proof of Theorem \ref{thm:LD}} \label{sec:LD}

\emph{Notation.} In the next two sections, $\Phi: \mathbb{R} \to [0,1]$, defined by
$$\Phi(a) = \frac{1}{\sqrt{2\pi}} \int_{-\infty}^a e^{-\xi^2/2} d\xi,$$
stands for the cdf of a standard normal random variable. For any measurable subset $S$ of $V_\infty$, we denote
by $\mathscr{F}_S:=\sigma\{\varphi_x: x\in S\}$ the sigma-algebra generated by the free field on $S$, and by
$\Omega^+_S := \{ \varphi_x\geq 0 ~\text{for all}~x\in S\}$ the event that the field is nonnegative everywhere on
$S$. Finally, we fix an element $(\mathcal{E},\mathcal{F})$ from the family $\mathfrak{E}$ of local, regular, conservative, non-zero Dirichlet forms on $L^2(F_\infty,\nu_\infty)$ which are invariant under the local symmetries of the carpet.

\subsection{Lower bound} \label{subsec:LDLB}

Let $\alpha > d_s(F) \cdot \overline{G}$, where $d_s(F) = 2\frac{\log m_F}{\log t_F}$ is the spectral dimension of the generalized Sierpinski carpet $F$ (note that $d_s(F)>2$), and $\overline{G} := \sup_{x\in V_\infty} G_{\mathcal{G}_\infty}(x,x)$. Denote by
$\mathbb{P}_N$ the law of the free field on $\mathcal{G}_\infty$ with mean $\sqrt{\alpha \log t_F}\sqrt{N}$ and
covariance $G_{\mathcal{G}_\infty}$.

First we wish to show that $\lim_{N\to\infty}\mathbb{P}_N(\Omega_{V_N}^+) = 1$. Observe that for any $x \in
V_\infty$,
$$
\mathbb{P}_N(\varphi_x<0) = \mathbb{P}(\varphi_x < -\sqrt{\alpha N\log t_F}) = \Phi\left(-\sqrt{\frac{\alpha N \log
t_F}{G_{\mathcal{G}_\infty}(x,x)}}\right),
$$
Using the fact that $G_{\mathcal{G}_\infty}(x,x) \leq \overline{G}$ and $\Phi(a) \leq \frac{1}{2} e^{-a^2/2}$ for
$a\leq 0$, we deduce that
$$
\mathbb{P}_N(\varphi_x<0) \leq \frac{1}{2} t_F^{-(N\alpha)/(2 \overline{G})}.
$$
It follows that
$$\mathbb{P}_N\left((\Omega_{V_N}^+)^c\right) = \mathbb{P}_N\left(\bigcup_{x\in V_N}\{\varphi_x<0\}\right) \leq
\sum_{x\in V_N}\mathbb{P}_N(\varphi_x<0) \leq c \left(m_F t_F^{-\frac{\alpha}{2 \overline{G}}}\right)^N
\underset{N\to\infty}{\longrightarrow} 0,$$
which is what we want.

Next we adopt the relative entropy argument as used in the proof of \cite[Lemma 2.3]{BDZ95}. Let $\displaystyle \Pi_N =
\left. \frac{d\mathbb{P}_N}{d\mathbb{P}}\right|_{\mathscr{F}_{V_N}}$. Introduce the relative entropy of
$\mathbb{P}_N$ to $\mathbb{P}$ restricted to $V_N$ by
$${\rm Ent}_{V_N}(\mathbb{P}_N|\mathbb{P})= \int_{\mathbb{R}^{V_\infty}} \Pi_N \log(\Pi_N) d\mathbb{P} =
\frac{1}{2}\alpha N \log t_F \left\langle \mathbbm{1}_{V_N}, \sum_{x\in V_N}(G_{\mathcal{G}_N}^\square)^{-1}
(\cdot,x)\mathbbm{1}_{V_N}(x)\right\rangle_{V_N}, $$
where $(G_{\mathcal{G}_N}^\square)^{-1}$ denotes the matrix inverse of $(G_{\mathcal{G}_N}^\square) := \left.
G_{\mathcal{G}_\infty}\right|_{V_N \times V_N}$. Applying the entropy inequality 
$$
\log \left(\frac{\mathbb{P}(\Omega_{V_N}^+)}{\mathbb{P}_N(\Omega_{V_N}^+)} \right) \geq  -\frac{1}{\mathbb{P}_N(\Omega_{V_N}^+)} \left({\rm Ent}_{V_N}(\mathbb{P}_N|\mathbb{P})+e^{-1}\right),
$$
\emph{cf.} the end of the proof of \cite[Lemma 2.3]{BDZ95}, we obtain
\begin{eqnarray}
\nonumber &&\varliminf_{N\to\infty} \frac{\log\mathbb{P}(\Omega_{V_N}^+)}{\rho_F^{-N} N \log t_F} \\
\nonumber &\geq& \varliminf_{N\to\infty} \left[-\frac{1}{\mathbb{P}_N(\Omega_{V_N}^+)} \left(\frac{{\rm
Ent}_{V_N}(\mathbb{P}_N|\mathbb{P})+e^{-1}}{\rho_F^{-N} N \log t_F}\right) + \frac{\log
\mathbb{P}_N(\Omega_{V_N}^+)}{\rho_F^{-N} N \log t_F} \right]\\
\nonumber &\geq& \varliminf_{N\to\infty} \left[-\frac{1}{\mathbb{P}_N(\Omega_{V_N}^+)}\frac{{\rm
Ent}_{V_N}(\mathbb{P}_N|\mathbb{P})}{\rho_F^{-N} N \log t_F} \right] + \varliminf_{N\to\infty}
\left[-\frac{1}{\mathbb{P}_N(\Omega_{V_N}^+)}\frac{e^{-1}}{\rho_F^{-N} N \log t_F} \right] +
\varliminf_{N\to\infty} \frac{\log \mathbb{P}_N(\Omega_{V_N}^+)}{\rho_F^{-N} N \log t_F}\\
 &\geq& -\left(\varlimsup_{N\to\infty} \frac{1}{\mathbb{P}_N(\Omega_{V_N}^+)}\cdot
\varlimsup_{N\to\infty}\frac{{\rm Ent}_{V_N}(\mathbb{P}_N|\mathbb{P})}{\rho_F^{-N} N \log t_F}\right) +0 +0 
~\geq~ -\frac{1}{2}\alpha \cc{2.7} {\rm
Cap}_{\mathcal{E}}(F)\label{ineq:LDLB}
\end{eqnarray}
by Lemma \ref{lem:VNConv}(iii). By making $\alpha$ arbitrarily close to $d_s(F)\cdot\overline{G}$, we
obtain the desired lower bound.

\begin{remark}
If we instead use a constant multiple of the original Dirichlet form $(\gamma\mathcal{E},\mathcal{F})$,
$\gamma>0$, inequality (\ref{ineq:LDLB}) will hold under the substitutions $\cc{2.7} \to \gamma^{-1} \cc{2.7}$ and ${\rm
Cap}_{\mathcal{E}}(F) \to {\rm Cap}_{\gamma\mathcal{E}}(F)$.
\label{rem:LDLB}
\end{remark}

\subsection{Upper bound} \label{subsec:LDUB}

Just as in the $\mathbb{Z}^d$ setting \cite{BDZ95}, the proof of the upper bound involves a series of \emph{coarse graining} and \emph{conditioning} arguments on the free field $\{\varphi_x\}_{x\in V_\infty}$, though some modifications are needed to account for the fractal geometry.

\emph{Notation.} If $\mathcal{G}=(V(\mathcal{G}), \sim)$ is a finite subgraph of a larger graph $\mathcal{G}_0 = (V(\mathcal{G}_0),\sim)$, then we denote the set of \emph{peripheral vertices} of $\mathcal{G}$ by
$$\partial \mathcal{G} := \{x \in V(\mathcal{G}): x\sim y ~\text{for some}~ y\in V(\mathcal{G}_0) \backslash V(\mathcal{G})\}.$$
The \emph{interior} of the graph $\mathcal{G}$ will thusly be defined by $\mathring{\mathcal{G}} := (V(\mathcal{G}) \backslash \partial \mathcal{G}, \sim)$.

Following \S\ref{sec:fractal}, we denote by $\mathcal{Q}_j(F_j)$ ($j\in \mathbb{N}$) the collection of closed cubes of side $\ell_F^{-j}$ whose vertices are in $\ell_F^{-j} \mathbb{Z}^d$, and which are contained in $F_j$. Then to each $\bar{Q}\in \mathcal{Q}_j(F_j)$ corresponds a unique vector ${\bf p}=(p_1,\cdots, p_d) \in (\mathbb{N}_0)^d$ such that $\bar{Q} = \left[p_1 \ell_F^{-j}, (p_1+1)\ell_F^{-j}\right] \times \cdots \times \left[p_d \ell_F^{-j}, (p_d+1) \ell_F^{-j}\right]$. Keeping with this notation, we define two related cubes derived from $\bar{Q}$: 
\begin{eqnarray*}
Q_\lefthalfcup &=& \left[p_1 \ell_F^{-j}, (p_1+1)\ell_F^{-j}\right) \times \cdots \times \left[p_d \ell_F^{-j}, (p_d+1)\ell_F^{-j}\right),\\
Q &=& Q_\lefthalfcup \cup \left( \bar{Q} ~\backslash~ \bigcup_{\substack{\bar{Q}'\in \mathcal{Q}_j(F_j) \\ \bar{Q}' \neq \bar{Q}}} Q'_\lefthalfcup\right). 
\end{eqnarray*}
The point of introducing the cube $Q$ is to ensure that the adjoining face between cubes is assigned to only one cube in a consistent manner. Let $\mathcal{Q}_j^\circ(F_j)$ be the totality of all $Q$. Observe that $F_j = \bigcup_{Q\in \mathcal{Q}_j^\circ(F_j)} Q$, and that $Q_1 \cap Q_2 = \emptyset$ for any $Q_1, Q_2 \in \mathcal{Q}_j^\circ(F_j)$ with $Q_1\neq Q_2$. 

Next we introduce, for each $k\leq N$, the following collections of $k$th-level subgraphs of $\mathcal{G}_N$:
\begin{eqnarray*}
S_k(\mathcal{G}_N) &:=&\left\{\ell_F^N \bar{Q} \cap \mathcal{G}_N: \bar{Q} \in \mathcal{Q}_{N-k}(F_{N-k})\right\},\\
S_k^\circ(\mathcal{G}_N) &:=&\left\{\ell_F^N Q \cap \mathcal{G}_N: Q \in \mathcal{Q}_{N-k}^\circ(F_{N-k})\right\}.
\end{eqnarray*}
By construction, there is a bijection $\iota_k : S_k^\circ(\mathcal{G}_N) \to \mathcal{Q}_{N-k}^\circ(F_{N-k})$ which maps each subgraph $\mathfrak{g} \in S_k^\circ(\mathcal{G}_N)$ to a subcube $Q \in \mathcal{Q}_{N-k}^\circ(F_{N-k})$. We will refer to $\iota_k$ often.

Now let us fix a sufficiently large $k \in \mathbb{N}$, and designate a vertex $x_0 \in V_k \backslash \partial \mathcal{G}_k$ as the ``representative interior point'' of $\mathcal{G}_k$. We do not insist on
where $x_0$ is located within $V_k$, so long as it stays away from the periphery $\partial \mathcal{G}_k$. (Contrast this
setup with previous works on $\mathbb{Z}^d$ \cite{BDZ95,Kurt}, where it is natural to designate the center vertex of each block cell as the
representative interior point.). Then for any $N>k$, let
\begin{eqnarray*}
\allrip{N} &=& \{x\in V_N: x=x_0 + \ell_F^k {\bf p}~\text{for some}~{\bf p}\in (\mathbb{N}_0)^d \} \quad\text{and}\\
\mathcal{D}_N &=& \{ x \in V_N: \exists i\in \{1,\cdots,d\} ~\text{such that} ~x_i =p \ell_F^k ~\text{for some}~ p\in \mathbb{N}_0\}
\end{eqnarray*}
be, respectively, the set of all representative interior points and $k$th-level boundary points in $V_N$; see Figure \ref{fig:conditioning}. Note that $|\allrip{N}|= m_F^{N-k}$.

\begin{figure}
\centering
\includegraphics{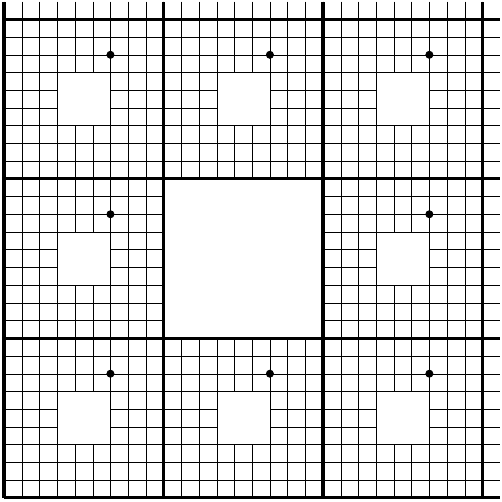}
\caption{The coarse-graining and conditioning scheme on the outer Sierpinski carpet graph $\mathcal{G}_\infty$. Vertices indicated by filled dots are the representative interior points ($\mathcal{C}_N$), while vertices covered by the solid lines (the conditioning grid) are where the free field $\varphi$ is conditioned upon ($\mathcal{D}_N$).}
\label{fig:conditioning}
\end{figure}

With the setup complete, we can proceed with the main arguments. \emph{Coarse graining} means that we are sampling the free field $\varphi$ at only one vertex from each subgraph $\mathfrak{g} \in S_k(\mathcal{G}_N)$. On top of that, we will analyze these Gaussian random variables conditional upon the sigma-algebra 
$\mathscr{F}_{\mathcal{D}_N}$ generated by the free field on the ``conditioning grid'' $\mathcal{D}_N$. 

A key property we will use is the random walk representation of the free field: if $S$ is a measurable subset of $V_N$, then the law of $\varphi_x$ conditional upon $\mathscr{F}_S$ is  Gaussian with
\begin{equation}\label{eq:RWrep}
\mathbb{E}[\varphi_x | \{\varphi_y = g_y\}_{y\in S}] = \sum_{y\in S} \mathbb{P}^x[X_{\tau_S}=y] g_y, \quad {\rm Var}[\varphi_x| \{\varphi_y=g_y\}_{y\in S}] = R_{\rm eff}(x,S),
\end{equation}
where $(X_t)_{t\geq 0}$ is the continuous-time random walk on the graph, $\tau_S$ denotes the first hitting time of $S$, and $R_{\rm eff}(A,B)$ is the effective resistance between two measurable subsets $A$ and $B$ of the graph.

From property (\ref{eq:RWrep}), we deduce that under $\mathbb{P}(\cdot|\mathscr{F}_{\mathcal{D}_N})$,
$\{\varphi_x : x\in \allrip{N}\}$ are independent Gaussian random variables with
mean $\mathbb{E}(\varphi_x | \mathscr{F}_{\mathcal{D}_N})=:\mu_x$ and identical variance $R_{\rm eff}(x_0, \partial \mathcal{G}_k)=G_{\mathring{\mathcal{G}}_k}(x_0, x_0)$, where $G_{\mathring{\mathcal{G}_k}}$ denotes the Green's function killed upon exiting $\mathring{\mathcal{G}}_k$. In addition we will use the following two properties of $\mu_x$ later:
\begin{itemize}
\item Under $\mathbb{P}$, the distribution of $(\varphi_y)_{y\in \mathcal{D}_N}$ is jointly Gaussian, and (\ref{eq:RWrep}) implies that $\mu_x$ is a sum of jointly Gaussian random variables, which is another Gaussian.
\item On the event $\Omega_{\mathcal{D}_N}^+$, (\ref{eq:RWrep}) implies that $\mu_x$ is the sum of nonnegative numbers, and hence a nonnegative number.
\end{itemize}

%

Let us now carry out the estimate of $\mathbb{P}(\Omega^+_{V_N})$. First of all,
\begin{equation}
\mathbb{P}(\Omega_{V_N}^+) \leq \mathbb{P}\left(\Omega_{\allrip{N}}^+ \cap\Omega_{\mathcal{D}_N}^+\right) =
\mathbb{E}\left(\prod_{x\in \allrip{N}} \mathbb{P}[\varphi_x\geq 0|\mathscr{F}_{\mathcal{D}_N}]\cdot
\mathbbm{1}_{\Omega_{\mathcal{D}_N}^+} \right),
\label{eq:P+Uppbound}
\end{equation}
where the equality comes from a basic identity for conditioned random variables and the independence of
$\{\varphi_x: x\in \allrip{N}\}$ under $\mathbb{P}(\cdot|\mathscr{F}_{\mathcal{D}_N})$.

Now take a $j \in \mathbb{N}$ and consider all $N>j+k$. For each $\mathfrak{B} \in S_{N-j}^\circ(\mathcal{G}_N)$, let $\bc:= \mathfrak{B}
\cap\allrip{N}$; observe that $|\bc|=m_F^{N-j-k}$. Let $\alpha_{x_0,\kappa} :=2(G_{\mathcal{G}_\infty}(x_0,x_0)-\kappa) \log t_F$, where $\kappa \in (0, G_{\mathcal{G}_\infty}(x_0,x_0))$. Finally, for
$\delta\in (0,1) $, define the events
\begin{equation}\label{def:Gamma}
\Gamma_{x_0,\mathfrak{B}} := \left\{\varphi: \left|\{x \in \bc : \mu_x \leq \sqrt{\alpha_{x_0,\kappa} N}\}\right| \geq \delta
|\bc|\right\}, \quad \Gamma_{x_0} = \bigcup_{\mathfrak{B} \in S_{N-j}^\circ(\mathcal{G}_N)} \Gamma_{x_0,\mathfrak{B}}.
\end{equation}

By writing $\Omega_{\mathcal{D}_N}^+$ as the disjoint union of $(\Omega_{\mathcal{D}_N}^+ \cap\Gamma_{x_0})$ and
$(\Omega_{\mathcal{D}_N}^+ \cap\Gamma_{x_0}^c)$, we develop (\ref{eq:P+Uppbound}) further as
\begin{equation}
\mathbb{P}(\Omega_{V_N}^+) \leq \mathbb{E}\left(\prod_{x \in \allrip{N}} \mathbb{P}[\varphi_x\geq
0|\mathscr{F}_{\mathcal{D}_N}]\cdot \mathbbm{1}_{\Omega_{\mathcal{D}_N}^+ \cap\Gamma_{x_0}} \right) +
\mathbb{E}\left(\prod_{x \in \allrip{N}} \mathbb{P}[\varphi_x\geq 0|\mathscr{F}_{\mathcal{D}_N}]\cdot
\mathbbm{1}_{\Omega_{\mathcal{D}_N}^+ \cap\Gamma_{x_0}^c} \right).
\label{ineq:probfieldpositive}
\end{equation}

The claim is that the first term on the RHS of (\ref{ineq:probfieldpositive}) becomes negligible as
$N\to\infty$. Since this result plays an essential role in Section \ref{sec:height}, we record it as a separate
lemma.

\begin{lemma}
Let $\gamma\in (0,1)$. Then for $k$ large enough, there exists a constant $\cc{3.1}(\delta,j,k)$, independent of $N$, such that
\begin{equation}
\mathbb{E}\left(\prod_{x \in \allrip{N}} \mathbb{P}[\varphi_x \geq 0|\mathscr{F}_{\mathcal{D}_N}]\cdot
\mathbbm{1}_{\Omega_{\mathcal{D}_N}^+ \cap\Gamma_{x_0}} \right) \leq \exp\left(-\cc{3.1} m_F^N
t_F^{-N(1-\gamma)}\right).
\end{equation}
\label{lem:firsttermvanishes}
\end{lemma}

\begin{proof}[Proof of Lemma \ref{lem:firsttermvanishes}]
Since $\displaystyle \uparrow \lim_{k\to\infty} G_{\mathring{\mathcal{G}}_k}(x_0,x_0) = G_{\mathcal{G}_\infty}(x_0, x_0)$, 
there exists $\kappa \in (0, G_{\mathcal{G}_\infty}(x_0,x_0))$ such that
\begin{equation}
\frac{1}{2 \log t_F}\frac{\alpha_{x_0,k}}{G_{\mathring{\mathfrak{G}}_k}(x_0,x_0)}=\frac{G_{\mathcal{G}_\infty}(x_0,x_0)-\kappa}{G_{\mathring{\mathcal{G}}_k}(x_0,x_0)} \leq 1-\gamma
\label{ineq2}
\end{equation}
 for all sufficiently large $k$. 
 
Meanwhile, on $\Gamma_{x_0}$,
there exists at least a $\mathfrak{B}^* \in S_{N-j}^\circ(\mathcal{G}_N)$ such that $\Gamma_{x_0,\mathfrak{B}^*}$ holds. Therefore
\begin{eqnarray*}
\prod_{x \in \allrip{N}} \mathbb{P}[\varphi_x\geq 0|\mathscr{F}_{\mathcal{D}_N}] &\leq& \mathbb{P}\left[\varphi_{x_0} -
\mu_{x_0} \geq -\sqrt{\alpha_{x_0,\kappa} N} \bigg| \mathscr{F}_{\mathcal{D}_N}\right]^{\delta |\bstarc|}
~=~ \left[1-\Phi\left(-\frac{\sqrt{\alpha_{x_0,\kappa} N}}{\sqrt{G_{\mathring{\mathcal{G}}_k}(x_0,x_0)}}\right)\right]^{\delta|\bstarc|}\\
&\leq& \left[1- \frac{\sqrt{G_{\mathring{\mathcal{G}}_k}(x_0,x_0)}}{\sqrt{\alpha_{x_0,\kappa} N}}
\exp\left(-\frac{\alpha_{x_0,\kappa} N}{2 G_{\mathring{\mathcal{G}}_k}(x_0,x_0)}\right) \right]^{\delta |\bstarc|}\\
&\leq&\left[1- \frac{1}{\sqrt{2(1-\gamma)N \log t_F}} t_F^{-N(1-\gamma)}\right]^{\delta|\bstarc|}
~\leq~ \exp\left(-c m_F^N t_F^{-N(1-\gamma)}\right) \quad \text{on}~\Omega_{\mathcal{D}_N}^+ \cap \Gamma_{x_0}.
 \end{eqnarray*}
In succession, we used the fact that $\varphi_{x_0}-\mu_{x_0}$ is a centered Gaussian random variable under
$\mathbb{P}(\cdot|\mathscr{F}_{\mathcal{D}_N})$, and applied a standard Gaussian estimate. To finish we used (\ref{ineq2}) and the inequality
$1-x \leq e^{-x}$.
 \end{proof}
 
We turn to estimate the second term on the RHS of (\ref{ineq:probfieldpositive}). The key is to
obtain a lower bound for $\sum_{x \in \bc} \mu_x$ ($\mathfrak{B}\in S_{N-j}^\circ(\mathcal{G}_N)$) on $\Omega_{\mathcal{D}_N}^+ \cap \Gamma_{x_0}^c$. We start by writing
$$ \sum_{x \in \bc} \mu_x = \sum_{\substack{x \in \bc\\ \mu_x > \sqrt{\alpha_{x_0,\kappa} N}}} \mu_x +
\sum_{\substack{x \in \bc \\ \sqrt{\alpha_{x_0,\kappa} N} \geq \mu_x \geq 0}} \mu_x.$$
On $\Omega_{\mathcal{D}_N}^+$, the second summand can be bounded below by $0$. As for the first summand, observe that on $\Gamma_{x_0}^c$, there
are at least $(1-\delta)|\bc|$ many representative interior points $x$ whose $\mu_x$ exceeds
$\sqrt{\alpha_{x_0,\kappa} N}$. Therefore
 $$ \sum_{x \in \bc} \mu_x \geq (1-\delta)|\bc| \sqrt{\alpha_{x_0,\kappa} N} \quad \text{~on~} \Omega_{\mathcal{D}_N}^+ \cap \Gamma_{x_0}^c.$$

Introduce arbitrary nonnegative numbers $f_\mathfrak{B}\geq 0$, $\mathfrak{B}\in S_{N-j}^\circ(\mathcal{G}_N)$. We then have
\begin{eqnarray*}
\mathbb{P}(\Omega_{\mathcal{D}_N}^+ \cap \Gamma_{x_0}^c) &\leq& \mathbb{P}\left( \frac{1}{|\bc|}\sum_{x\in \bc} \mu_x \geq (1-\delta)
\sqrt{\alpha_{x_0,\kappa} N}\right) \\
&=& \mathbb{P}\left(\sum_{\mathfrak{B}\in S_{N-j}^\circ(\mathcal{G}_N)} f_\mathfrak{B} \frac{1}{|\bc|}\sum_{x\in \bc} \mu_x \geq (1-\delta)
\sqrt{\alpha_{x_0,\kappa} N} \sum_{\mathfrak{B}\in S_{N-j}^\circ(\mathcal{G}_N)} f_\mathfrak{B}\right)\\
&\leq& \exp\left[-\frac{\displaystyle (1-\delta)^2 \alpha_{x_0,\kappa} N \left(\sum_{\mathfrak{B}\in S_{N-j}^\circ(\mathcal{G}_N)}
f_\mathfrak{B}\right)^2}{\displaystyle 2 ~{\rm Var} \left(\sum_{\mathfrak{B}\in S_{N-j}^\circ(\mathcal{G}_N)} f_\mathfrak{B} \frac{1}{|\bc|} \sum_{x \in \bc}
\mu_x \right)} \right].
\end{eqnarray*}
In the last inequality, we used the fact that $\sum_{x\in \mathfrak{B}_\mathcal{C}} \mu_x$ is Gaussian (being a sum of independent Gaussians) under $\mathbb{P}$, and applied a standard Gaussian estimate.

The law of total variance, ${\rm Var}(X) = {\rm Var}(\mathbb{E}(X|\mathscr{F})) + \mathbb{E}
({\rm Var}(X|\mathscr{F}))$, implies that  $ {\rm Var}(\mathbb{E}(X|\mathscr{F}))\leq {\rm Var}(X)$. We apply this to
our setting to get
$${\rm Var}\left(\sum_{\mathfrak{B}\in S_{N-j}^\circ(\mathcal{G}_N)} f_\mathfrak{B} \sum_{x \in \bc} \mu_x \right) \leq {\rm Var}\left(\sum_{\mathfrak{B}\in
S_{N-j}^\circ(\mathcal{G}_N)} f_\mathfrak{B} \sum_{x \in \bc} \varphi_x\right).$$

Let the function $\Xi_j : F\to\mathbb{R}_+$ be given by $\displaystyle \Xi_j = \sum_{\mathfrak{B}\in S_{N-j}^\circ(\mathcal{G}_N)} f_\mathfrak{B}
\mathbbm{1}_{\iota_j(\mathfrak{B})}$, where $\iota_j: S^\circ_{N-j} \to \mathcal{Q}^\circ_{N-j}(F_{N-j})$ is a bijection which maps each subgraph to its corresponding subcube. One verifies that
$$ \sum_{\mathfrak{B}\in S_{N-j}^\circ(\mathcal{G}_N)} f_\mathfrak{B} = \frac{1}{|\bc|} \sum_{x \in \allrip{N}} \Xi_j\left(\frac{x}{\ell_F^N}\right)
\qquad {\rm and} \qquad \sum_{\mathfrak{B}\in S_{N-j}^\circ(\mathcal{G}_N)} f_\mathfrak{B} \sum_{x\in \bc}\varphi_x = \sum_{x \in \allrip{N}}
\Xi_j\left(\frac{x}{\ell_F^N}\right) \varphi_x.$$
Hence
\begin{eqnarray*}
{\rm Var} \left(\sum_{\mathfrak{B}\in S_{N-j}^\circ(\mathcal{G}_N)} f_\mathfrak{B} \frac{1}{|\bc|} \sum_{x\in \bc} \varphi_x \right) &=& \frac{1}{|\bc|^2}
{\rm Var} \left(\sum_{x \in \allrip{N}} \Xi_j\left(\frac{x}{\ell_F^N}\right) \varphi_x\right) \\
&=& \frac{1}{|\bc|^2}
\sum_{x,x'\in \allrip{N}}G_{\mathcal{G}_\infty}(x, x') \Xi_j\left(\frac{x}{\ell_F^N}\right) \Xi_j\left(\frac{x'}{\ell_F^N}\right) \\
 &=&  \frac{1}{|\bc|^2}
\sum_{x,x'\in V_N} G_{\mathcal{G}_\infty}(x, x') \left(\Xi_j \mathbbm{1}_{\ell_F^{-N} \mathcal{C}_N} \right)\left(\frac{x}{\ell_F^N}\right) \left(\Xi_j \mathbbm{1}_{\ell_F^{-N} \mathcal{C}_N}\right)\left(\frac{x'}{\ell_F^N}\right).
\end{eqnarray*}

Putting things together,
$$\mathbb{P}(\Omega_{\mathcal{D}_N}^+ \cap\Gamma_{x_0}^c)\leq \exp\left[-\frac{\displaystyle (1-\delta)^2 \alpha_{x_0,\kappa} N
\left(\sum_{x\in \mathcal{C}_N} \Xi_j\left(\frac{x}{\ell_F^N}\right)\right)^2}{\displaystyle 2\sum_{x,x'\in V_N} G_{\mathcal{G}_\infty}(x, x') \left(\Xi_j \mathbbm{1}_{\ell_F^{-N} \mathcal{C}_N} \right)\left(\frac{x}{\ell_F^N}\right) \left(\Xi_j \mathbbm{1}_{\ell_F^{-N}\mathcal{C}_N}\right)\left(\frac{x'}{\ell_F^N}\right)} \right].$$
It follows from this and Lemma \ref{lem:firsttermvanishes} that 
\begin{eqnarray}
\nonumber && \varlimsup_{N\to\infty} \frac{\log \mathbb{P}(\Omega_{V_N}^+)}{\rho_F^{-N} N \log t_F} ~\leq~
\varlimsup_{N\to\infty} \frac{\log \mathbb{P}(\Omega_{\mathcal{D}_N}^+ \cap\Gamma_{x_0}^c)}{\rho_F^{-N} N \log t_F}\\
\nonumber &\leq& \varlimsup_{N\to\infty} \left[-(1-\delta)^2 (G_{\mathcal{G}_\infty}(x_0,x_0)-\kappa) \frac{\displaystyle
\left(\frac{1}{m_F^{N-k}}\sum_{x\in \mathcal{C}_N} \Xi_j\left(\frac{x}{\ell_F^N}\right)\right)^2}{\displaystyle
\frac{\rho_F^{-N}}{m_F^{2(N-k)}} \sum_{x,x'\in V_N}G_{\mathcal{G}_\infty}(x,x')\left(\Xi_j \mathbbm{1}_{\ell_F^{-N} \mathcal{C}_N}\right)\left(\frac{x}{\ell_F^N}\right) \left(\Xi_j \mathbbm{1}_{\ell_F^{-N} \mathcal{C}_N}\right)\left(\frac{x'}{\ell_F^N}\right) } \right]\\
\nonumber &\leq& -\frac{(1-\delta)^2 (G_{\mathcal{G}_\infty}(x_0,x_0) -\kappa)}{\cc{1.2}^2 m_F^{2k}} \frac{\displaystyle \varliminf_{N\to\infty} \left(\frac{1}{m_F^{N-k}}\sum_{x\in \mathcal{C}_N} \Xi_j\left(\frac{x}{\ell_F^N}\right)\right)^2}{\displaystyle \varlimsup_{N\to\infty}\frac{\rho_F^{-N}}{|V_N|^2} \sum_{x,x'\in V_N}G_{\mathcal{G}_\infty}(x,x')\left(\Xi_j \mathbbm{1}_{\ell_F^{-N} \mathcal{C}_N}\right)\left(\frac{x}{\ell_F^N}\right) \left(\Xi_j \mathbbm{1}_{\ell_F^{-N} \mathcal{C}_N}\right)\left(\frac{x'}{\ell_F^N}\right)}\\
\label{eq:capacityargument}&\leq& - \frac{(1-\delta)^2 (G_{\mathcal{G}_\infty}(x_0,x_0)-\kappa)}{\cc{1.2}^2 \cc{2.6}} \left[\frac{\langle \mathbbm{1}_F,\Xi_j
\nu\rangle_F^2}{\mathcal{E}(U(\Xi_j \nu), U(\Xi_j \nu))} \right]
\end{eqnarray}
for all $\Xi_j \in\mathcal{F}$. In obtaining the convergence for the denominator, we applied Lemma \ref{lem:VNConv}(ii) and identified the limit measure as $m_F^{-k} \Xi_j \nu$.

By varying over the coefficients $f_\mathfrak{B}$ in
$\Xi_j$ and taking the limit $j\to\infty$, we can obtain any $\Xi \nu \in \mathcal{M}_{0,{\rm ac}}^{(0)}(F)$. Then we can recover any $\mu \in S_0^{(0)}$, ${\rm supp}(\mu)\subset F$, by an approximating sequence of measures in $\mathcal{M}_{0,{\rm ac}}^{(0)}(F)$ \emph{\`{a} la} Yosida (Proposition \ref{prop:Yosida}). We supremize the bracketed expression on the RHS of (\ref{eq:capacityargument}) over all $\mu \in S_0^{(0)}$ and apply Proposition \ref{prop:capacity}(iv), then take $\delta,\kappa\to 0$ to get
\begin{equation}
\varlimsup_{N\to\infty} \frac{\log \mathbb{P}(\Omega_{V_N}^+)}{\rho_F^{-N} N \log t_F} \leq
-\frac{1}{\cc{1.2}^2 \cc{2.6}}\cdot G_{\mathcal{G}_\infty}(x_0,x_0)\cdot {\rm Cap}_{\mathcal{E}}(F).
\label{ineq:LDUB}
\end{equation}
This essentially proves the upper bound in Theorem \ref{thm:LD}, though \emph{a priori} not the sharpest possible
bound. In principle, one can choose the interior point $x_0^* \in V_k \backslash \partial \mathcal{G}_k$ with the biggest
on-diagonal Green's function value $G_{\mathcal{G}_\infty}(x_0^*, x_0^*)$, and run through the preceding argument
to get (\ref{ineq:LDUB}) with $G_{\mathcal{G}_\infty}(x_0^*,x_0^*)$ in place of
$G_{\mathcal{G}_\infty}(x_0,x_0)$.

\section{Proof of Theorem \ref{thm:samplemeanheight}} \label{sec:height}

The purpose of this section is to prove that for any $\epsilon>0$ and any $\eta>0$,
\begin{eqnarray}
\lim_{N\to\infty} \sup_{\substack{x\in V_N \\ V_{N,\epsilon}(x) \subset V_N}}
\mathbb{P}\left(\bar\varphi_{N,\epsilon}(x) \leq \left(\sqrt{2\underline{G}\log t_F} -\eta\right) \sqrt{N}~ \bigg|
~\Omega^+_{V_N}\right)&=&0. \label{eq:heightLB} \\
\lim_{N\to\infty} \sup_{\substack{x\in V_N \\ V_{N,\epsilon}(x) \subset V_N}}
\mathbb{P}\left(\bar\varphi_{N,\epsilon}(x) \geq \left(\sqrt{2\underline{G}\log t_F} +\eta\right) \sqrt{N}~ \bigg|
~\Omega^+_{V_N}\right)&=&0. \label{eq:heightUB}
\end{eqnarray}

\subsection{Lower bound} \label{subsec:heightlb}

In this subsection, $\displaystyle \mathcal{L}_S:=\frac{1}{|S|}\sum_{x\in S}\delta_{\varphi_x}$ denotes the
empirical measure of the free field $\varphi$ on a measurable subset $S$ of $V_\infty$.

Equation (\ref{eq:heightLB}) is a direct consequence of the following lemma. 

\begin{lemma}
For any $\alpha < 2 \underline{G}\log t_F$ and $\delta>0$,
\begin{equation}
\lim_{N\to\infty} \mathbb{P}\left(\mathcal{L}_{V_N}\left[0,\sqrt{\alpha N}\right] \geq
\delta~\bigg|~\Omega^+_{V_N} \right)=0.
\label{eq:nolowerheight}
\end{equation}
\label{lem:nolowerheight}
\end{lemma}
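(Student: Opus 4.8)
The plan is to follow the Bolthausen–Deuschel–Zeitouni strategy (\cite{BDZ95}*{proof of Proposition 2.8}), using the large-deviations lower bound already at our disposal (the lower bound half of Theorem \ref{thm:LD}, equivalently inequality (\ref{ineq:LDLB})) together with an upper bound on the probability of the ``bad'' event $\{\mathcal{L}_{V_N}[0,\sqrt{\alpha N}] \geq \delta\} \cap \Omega^+_{V_N}$. Write $\beta_N := \sqrt{\alpha N}$ for brevity. By definition of conditional probability,
\[
\mathbb{P}\left(\mathcal{L}_{V_N}[0,\beta_N] \geq \delta \,\big|\, \Omega^+_{V_N}\right) = \frac{\mathbb{P}\left(\{\mathcal{L}_{V_N}[0,\beta_N]\geq\delta\}\cap \Omega^+_{V_N}\right)}{\mathbb{P}(\Omega^+_{V_N})}.
\]
For the denominator, (\ref{ineq:LDLB}) gives $\mathbb{P}(\Omega^+_{V_N}) \geq \exp(-C \rho_F^{-N} N \log t_F)$ for a suitable constant $C$ (one can take $C$ close to $\overline{G}\,\cc{2.7}\,{\rm Cap}_\mathcal{E}(F)$; the precise value does not matter for the argument, only that the exponent is of order $\rho_F^{-N} N \log t_F$). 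The heart of the matter is therefore to show that the numerator is $\exp(-C'\rho_F^{-N} N \log t_F)$ with $C' > C$, so that the ratio tends to $0$; here we will need the strict inequality $\alpha < 2\underline{G}\log t_F$ and can afford to choose $\alpha$ (hence the ``deficit'' in the entropic cost) appropriately.

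To bound the numerator I would again coarse-grain along $\mathcal{C}_N$ and condition on $\mathscr{F}_{\mathcal{D}_N}$, exactly as in \S\ref{subsec:LDUB}. On the event that at least $\delta |V_N|$ vertices of $V_N$ have field value in $[0,\beta_N]$, a positive fraction — say at least $\delta' = \delta'(\delta,k)$ — of the representative interior points $x\in\mathcal{C}_N$ must also have $\varphi_x \in [0, \beta_N']$ for a slightly enlarged threshold $\beta_N'$, because each block cell $\mathfrak{g}\in S_k(\mathcal{G}_N)$ has bounded size $m_F^k$ (this is a pigeonhole/regularity step, using that $|\mathcal{C}_N| \asymp m_F^{-k}|V_N|$; the bounded block-cell structure is exactly the regularity property invoked in the introduction). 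Conditionally on $\mathscr{F}_{\mathcal{D}_N}$, the $\{\varphi_x\}_{x\in\mathcal{C}_N}$ are independent Gaussians with variance $G_{\mathring{\mathcal{G}}_k}(x_0,x_0) \leq \overline{G}$ and nonnegative means $\mu_x \geq 0$ on $\Omega^+_{\mathcal{D}_N}$; hence $\mathbb{P}[\varphi_x \in [0,\beta_N'] \mid \mathscr{F}_{\mathcal{D}_N}] \leq \mathbb{P}[0 \leq \varphi_x - \mu_x + \mu_x \leq \beta_N']$, and since the Gaussian density is bounded by $(2\pi G_{\mathring{\mathcal{G}}_k}(x_0,x_0))^{-1/2}$, this is at most $C\beta_N'/\sqrt{N}$ — wait, that is not small. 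The right move, following BDZ, is instead to combine this with Lemma \ref{lem:firsttermvanishes}: on the complement of $\Gamma_{x_0}$ most means $\mu_x$ exceed $\sqrt{\alpha_{x_0,\kappa} N}$, and since $\alpha < 2\underline{G}\log t_F \leq 2 G_{\mathcal{G}_\infty}(x_0,x_0)\log t_F$ (choosing $x_0$ with the smallest on-diagonal Green value, or more carefully comparing $\underline{G}$ with $G_{\mathring{\mathcal{G}}_k}(x_0,x_0)$ for $k$ large), for a field value $\varphi_x$ to land in $[0,\beta_N']$ one needs $\varphi_x - \mu_x \leq \beta_N' - \sqrt{\alpha_{x_0,\kappa}N}$, a large negative deviation of a centered Gaussian of variance $\leq \overline{G}$; a standard Gaussian tail bound gives each such probability $\leq \exp(-c N \log t_F)$ for some $c>0$. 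Multiplying over the $\delta'|\mathcal{C}_N| \asymp \delta' m_F^{N-k}$ such points, and using $\mathbb{P}(\Gamma_{x_0}) \leq \exp(-\cc{3.1} m_F^N t_F^{-N(1-\gamma)})$ from Lemma \ref{lem:firsttermvanishes} to dispose of $\Gamma_{x_0}$, yields
\[
\mathbb{P}\left(\{\mathcal{L}_{V_N}[0,\beta_N]\geq\delta\}\cap\Omega^+_{V_N}\right) \leq \exp\left(-c\,\delta'\, m_F^{N-k}\, N\log t_F\right) + \exp\left(-\cc{3.1} m_F^N t_F^{-N(1-\gamma)}\right),
\]
and since $m_F^N = (t_F/\rho_F)^N = \rho_F^{-N} t_F^N$ dominates $\rho_F^{-N}$, the exponent of the first term is of strictly larger order than $\rho_F^{-N} N\log t_F$; the second term is likewise negligible. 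Dividing by the lower bound on $\mathbb{P}(\Omega^+_{V_N})$ from (\ref{ineq:LDLB}) then gives (\ref{eq:nolowerheight}).

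The main obstacle I anticipate is bookkeeping the two thresholds and the constant $k$: one must choose $\kappa, \gamma$ small and $k$ large so that $\alpha < 2(G_{\mathring{\mathcal{G}}_k}(x_0,x_0) - \kappa)\log t_F$ while simultaneously $\alpha_{x_0,\kappa}/G_{\mathring{\mathcal{G}}_k}(x_0,x_0) \leq 1-\gamma$ as in Lemma \ref{lem:firsttermvanishes}, and also so that the enlarged threshold $\beta_N' = \beta_N + O(\ell_F^k)$ (the extra room needed to pass from ``$\varphi_x$ small'' at the representative point to ``$\varphi_z$ small'' at the neighbouring cell vertices, controlled by the bounded diameter of a block cell and a random-walk/Gaussian-increment estimate) still satisfies $\beta_N' \ll \sqrt{\alpha_{x_0,\kappa} N}$ for large $N$ — this last point works because both thresholds scale like $\sqrt{N}$ but with $\alpha < \alpha_{x_0,\kappa}$, so the difference $\sqrt{\alpha_{x_0,\kappa}N} - \beta_N'$ is still $\Theta(\sqrt{N})$, giving the positive rate $c$ in the Gaussian tail. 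Because the excerpt already stipulates the strict inequality $\alpha < 2\underline{G}\log t_F$ and grants us Lemmas \ref{lem:firsttermvanishes} and \ref{lem:VNConv} plus the lower bound of Theorem \ref{thm:LD}, all of these choices can be made consistently, and the estimate closes.
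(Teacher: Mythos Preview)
Your proposal has the right skeleton---split off a ``conditional means too small'' piece via Lemma~\ref{lem:firsttermvanishes} and a ``field far below its conditional mean'' piece via Gaussian tails, then compare against the lower bound (\ref{ineq:LDLB})---but the pigeonhole/regularity step is a genuine gap. You assert that if $\delta|V_N|$ vertices of $V_N$ have $\varphi_z\in[0,\beta_N]$, then a positive fraction of the representative interior points $x\in\mathcal{C}_N$ have $\varphi_x\in[0,\beta_N']$ with $\beta_N'=\beta_N+O(\ell_F^k)$. Pigeonhole gives only that $\delta'|\mathcal{C}_N|$ \emph{block cells} contain some vertex with small field; it says nothing about the field at the single designated representative in each such cell. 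Passing from ``some vertex in the cell is low'' to ``the representative is low'' requires controlling the increment $\varphi_z-\varphi_x$ across a cell; that increment is Gaussian with variance of order one (depending on $k$), so a union bound over $\asymp m_F^N$ cells forces an enlargement of order $\sqrt{N}$, not $O(\ell_F^k)$, and you would still owe a proof that the resulting bad-increment event has probability $o(\mathbb{P}(\Omega^+_{V_N}))$.

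The paper avoids any such transfer. Step~1 proves the statement with $\mathcal{L}_{\mathcal{C}_N}$ in place of $\mathcal{L}_{V_N}$, for an arbitrary fixed $x_0$: the event $\{|\{x\in\mathcal{C}_N:\varphi_x\le\sqrt{\alpha N}\}|\ge\delta|\mathcal{C}_N|\}$ is contained in $J_0\cup J_1$, where $J_0$ (at least $\delta'|\mathcal{C}_N|$ of the $\mu_x$ are $\le\sqrt{\alpha'N}$) is killed by Lemma~\ref{lem:firsttermvanishes}, and on $J_1$ one has $|\mathcal{C}_N|^{-1}\sum_{x}|\varphi_x-\mu_x|\ge(\delta-\delta')(\sqrt{\alpha'}-\sqrt{\alpha})\sqrt{N}$, a large deviation for a sum of conditionally i.i.d.\ centered Gaussians giving probability $\le\exp(-CN\rho_F^{-N}t_F^N)$. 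Step~2 then recovers $\mathcal{L}_{V_N}$ by a \emph{finite union}: vary $x_0$ over $V_k\setminus\partial\mathcal{G}_k$ to cover $V_N\setminus\mathcal{D}_N$, and translate the conditioning grid (introducing $\tilde{\mathcal{C}}_N^z,\tilde{\mathcal{D}}_N^z$ for $z\in V_k^\lefthalfcup$ together with the companion Lemma~\ref{lem:firsttermvanishes2}) to cover the grid points $\mathcal{D}_N$. No increment estimate is needed anywhere.
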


\begin{proof}
For the sake of clarity, we present the proof in two steps. 

\textbf{Step 1.} Fix a representative interior point $x_0 \in V_k \backslash \partial \mathcal{G}_k$ as in Section
\ref{subsec:LDUB}. Also recall the definition of $\allrip{N}$. Our interim goal is to show that for any $\alpha<
2G_{\mathcal{G}_\infty}(x_0,x_0) \log t_F$ and $\delta>0$,
\begin{equation}
\lim_{N\to\infty} \mathbb{P}\left(\mathcal{L}_{\allrip{N}}\left[0,\sqrt{\alpha N}\right] \geq
\delta~\bigg|~\Omega^+_{V_N} \right) = 0.
\label{eq:condprobvanish1}
\end{equation}

Following the proof of \cite[Lemma 4.4]{BDZ95}, we define, for each $\alpha>0$, the events
$$
\Theta_N(\alpha) = \left\{x\in \allrip{N}: \varphi_x \leq \sqrt{\alpha N}\right\}\quad \text{and}
\quad\bar\Theta_N(\alpha) = \left\{x\in \allrip{N}: \mu_x \leq \sqrt{\alpha N}\right\}.
$$
Then for each $\delta>\delta'>0$ and $\alpha<\alpha'<2 G_{\mathcal{G}_\infty}(x_0,x_0) \log t_F$,
\begin{eqnarray*}
\left\{\mathcal{L}_{\allrip{N}}\left[0,\sqrt{\alpha N}\right] \geq \delta\right\} &=& \left\{ |\Theta_N(\alpha)|
\geq \delta |\allrip{N}|\right\}\\
&=& \left\{ |\Theta_N(\alpha)| \geq \delta|\allrip{N}|, |\bar\Theta_N(\alpha')| \geq \delta' |\allrip{N}|\right\}
\cup \left\{ |\Theta_N(\alpha)| \geq \delta|\allrip{N}|, |\bar\Theta_N(\alpha')| < \delta' |\allrip{N}|\right\}\\
&\subset& \left\{ |\bar\Theta_N(\alpha')| \geq \delta' |\allrip{N}|\right\} \cup \left\{
|\Theta_N(\alpha)\cap\bar\Theta_N(\alpha')^c| \geq (\delta-\delta')|\allrip{N}|\right\} ~=: J_0 \cup J_1.
\end{eqnarray*}
Observe that $J_0 \subset \Gamma_{x_0}$, where $\Gamma_{x_0}$ was introduced in (\ref{def:Gamma}). Therefore, Lemma \ref{lem:firsttermvanishes} implies that for each $\gamma \in (0,1)$, there exists a positive constant $\cc{3.1}$ such
that 
\begin{eqnarray*}
\mathbb{P}\left(J_0 \cap\Omega^+_{V_N}\right)  &\leq&  \mathbb{E}\left(\prod_{x\in \allrip{N}} \mathbb{P}[\varphi_x\geq 0|\mathscr{F}_{\mathcal{D}_N}]\cdot
\mathbbm{1}_{J_0 \cap \Omega_{\mathcal{D}_N}^+ } \right) \\
&\leq &  \mathbb{E}\left(\prod_{x\in \allrip{N}} \mathbb{P}[\varphi_x\geq 0|\mathscr{F}_{\mathcal{D}_N}]\cdot
\mathbbm{1}_{ \Gamma_{x_0} \cap \Omega_{\mathcal{D}_N}^+} \right) ~\leq~  \exp\left(-\cc{3.1} m_F^N t_F^{-N(1-\gamma)}\right).
\end{eqnarray*}
On the other hand, the lower bound of Theorem \ref{thm:LD} implies that for all sufficiently large $N$,
$\mathbb{P}(\Omega^+_{V_N}) \geq \exp\left(-c \rho_F^{-N} N \log t_F \right)$. Therefore
$$ \frac{\mathbb{P}\left(J_0 \cap\Omega^+_{V_N}\right)}{\mathbb{P}(\Omega^+_{V_N})} \leq \exp\left(-c \rho_F^{-N}
\left(t_F^{N\gamma} -c'N\right)\right),$$
and the RHS tends to $0$ as $N\to\infty$ because $\rho_F<1$. Thus it remains to show that
$$
\frac{\mathbb{P}\left(J_1 \cap\Omega^+_{V_N}\right)}{\mathbb{P}(\Omega^+_{V_N})} \longrightarrow 0 \quad
\text{as} \quad N\to\infty.
$$
Note first that $\mu_x - \varphi_x \geq (\sqrt{\alpha'}-\sqrt{\alpha})\sqrt{N}$ whenever $x\in \Theta_N(\alpha)
\cap\bar\Theta_N(\alpha')^c$. So on $J_1$,
$$\frac{1}{|\allrip{N}|} \sum_{x\in \allrip{N}} |\varphi_x-\mu_x| \geq (\delta-\delta')
(\sqrt{\alpha'}-\sqrt{\alpha})\sqrt{N}.$$
Hence
\begin{eqnarray*}
\mathbb{P}\left(J_1 \cap\Omega^+_{V_N}\right)&\leq& \mathbb{P}\left(J_1 \cap\Omega^+_{\mathcal{D}_N}\right)\\
&\leq&\mathbb{E}\left(\mathbb{P}\left(\frac{1}{|\allrip{N}|} \sum_{x\in \allrip{N}} |\varphi_x-\mu_x| \geq
(\delta-\delta') (\sqrt{\alpha'}-\sqrt{\alpha})\sqrt{N}~\bigg|~ \mathscr{F}_{\mathcal{D}_N} \right)\cdot
\mathbbm{1}_{J_1 \cap\Omega^+_{\mathcal{D}_N}}\right) \\ &\leq& \exp\left(- \frac{(\delta-\delta')^2
(\sqrt{\alpha'}-\sqrt{\alpha})^2 N |\allrip{N}|^2}{2|\allrip{N}| G_{\mathring{\mathcal{G}}_k}(x_0,x_0)}\right)\\
&\leq& \exp\left(-C N \rho_F^{-N} t_F^N\right)
\end{eqnarray*}
for some positive constant $C$ which depends on anything but $N$. In the third inequality above, we used the fact that under $\mathbb{P}(\cdot | \mathscr{F}_{\mathcal{D}_N})$, $\{\varphi_x-\mu_x :x\in \allrip{N}\}$
are independent centered Gaussian random variables with variance $G_{\mathring{\mathcal{G}}_k}(x_0,x_0)$. This shows that $\mathbb{P}(J_1
\cap\Omega^+_{V_N})$ decays faster than $\mathbb{P}(\Omega^+_{V_N})$ as $N\to\infty$, and hence proves
(\ref{eq:condprobvanish1}).


\textbf{Step 2.} Observe that the proof in Step 1 continues to hold for any other interior point $x_0 \in
V_k\backslash \partial \mathcal{G}_k$ with the obvious replacements. Thus we can deduce that for any $\alpha< 2
\left(\min_{x_0 \in V_k \backslash \partial \mathcal{G}_k}G_{\mathcal{G}_\infty}(x_0,x_0)\right) \log t_F$ and $\delta>0$,
$$ \lim_{N\to\infty} \mathbb{P}\left( \mathcal{L}_{V_N \backslash \mathcal{D}_N}\left[0,\sqrt{\alpha N}\right]
\geq \delta ~\bigg|~ \Omega^+_{V_N}\right)=0.$$
This falls short of (\ref{eq:nolowerheight}) because $\mathcal{D}_N$ has been excluded from the empirical
measure. To redress this shortcoming, we need to translate the conditioning grid relative to the underlying graph
$V_\infty$, so that points on $\mathcal{D}_N$ lie within the grid, and then carry out the conditioning scheme.
Let us take a moment to describe the translation procedure, as it will be used again in \S\ref{subsec:heightub}. 

\begin{figure}
\centering
\includegraphics{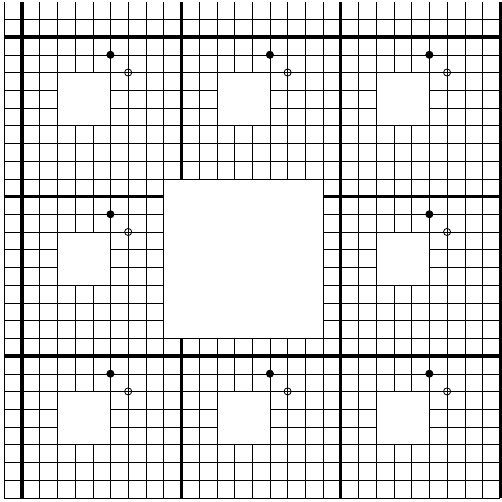}
\caption{The coarse graining and conditioning scheme upon translation. As in Figure \ref{fig:conditioning}, the filled dots indicate the original representative interior points ($\mathcal{C}_N$). Applying a translation by $z-x_0$ for some $z\in V_k$ (one of the hollow dots), one obtains the new representative interior points ($\tilde{\mathcal{C}}_N^z$, hollow dots) and conditioning grid ($\tilde{\mathcal{D}}_N^z$, solid lines).}
\label{fig:conditioningtranslated}
\end{figure}

As before we fix a representative interior point $x_0 \in V_k \backslash \partial \mathcal{G}_k$. For each $z \in [0,\ell_F^k)^d \cap V_k =: V_k^\lefthalfcup$, define
\begin{eqnarray*}
\tallrip{N}^z &=& \{ x\in V_N: x=z+\ell_F^k {\bf p} ~\text{for some}~ {\bf p}\in (\mathbb{N}_0)^d\},\\
\tilde{\mathcal{D}}_N^z &=& \{ x\in V_N: \exists i\in \{1,\cdots,d\} ~\text{such that}~ x_i = p\ell_F^k + (z-x_0)_i ~\text{for some}~ p\in \mathbb{N}_0 \}.
\end{eqnarray*}
In effect, we are translating the set of coarse-graining points and the conditioning grid by a vector $z- x_0$; see Figure \ref{fig:conditioningtranslated}. Since $\tilde{\mathcal{D}}_N^z$ separates points in $\tallrip{N}^z$, we can define finite subgraphs $\mathfrak{g}_x = (V(\mathfrak{g}_x),\sim)$ of $\mathcal{G}_N$ indexed by $x\in \tallrip{N}^z$ such that:
\begin{itemize}
\item The boundary $\partial \mathfrak{g}_x$ of $\mathfrak{g}_x$ is a subset of $\tilde{\mathcal{D}}_N^z$ which separates $x$ from all other elements of $\tallrip{N}^z$.
\item $V(\mathfrak{g}_x) = V(\mathring{\mathfrak{g}_x}) \cup \partial \mathfrak{g}_x$, where $V(\mathring{\mathfrak{g}_x})$ is the set of vertices whose shortest path to $x$ does not intersect $\partial \mathfrak{g}_x$. Here we used the notation $\mathring{\mathfrak{g}_x}$ to indicate the interior of $\mathfrak{g}_x$.
\end{itemize}

The conditioning argument now reads as follows: Under $\mathbb{P}(\cdot| \mathscr{F}_{\tilde{\mathcal{D}}_N^z})$, $\{\varphi_x: x\in \tallrip{N}^z\}$ are independent Gaussian random variables, each having mean
$\mathbb{E}(\varphi_x | \mathscr{F}_{\tilde{\mathcal{D}}_N^z}) =: \tilde\mu_x^z$ and
variance $G_{\mathring{\mathfrak{g}}_x}(x,x)$. Keep in mind that the variances are \emph{not
all} identical because the subgraphs $(\mathfrak{g}_x)_{x\in \tallrip{N}^z}$ no longer retain the symmetries of the original carpet. Nevertheless, we still have the resistance shorting rule
$$ G_{\mathring{\mathfrak{g}}_x}(x,x) = R_{\rm eff}\left(x,V\left(\left(\mathring{\mathfrak{g}}_x\right)^c\right)\right) \leq R_{\rm eff}(x,\{\infty\}) = G_{\mathcal{G}_\infty}(x, x),$$
where $R_{\rm eff}(A,B)$ is the effective resistance between two (finite) subsets $A$, $B$ of $V_\infty$ on the graph $\mathcal{G}_\infty$.

Now define $\tbc^z= V(\mathfrak{B}) \cap \tallrip{N}^z$ for each $\mathfrak{B} \in S_{N-j}^\circ(\mathcal{G}_N)$ and each $z\in V_k^\lefthalfcup$. Note that $V(\mathfrak{B})$ equals the disjoint union $\bigcup_{z\in V_k^\lefthalfcup}\tbc^z$, and that the $|\tbc^z|$ are not the same for all $z$ and $\mathfrak{B}$ due to inclusion/exclusion of $k$th-level boundary points. Nevertheless we still have $|\tbc^z|=\mathcal{O}(m_F^{N-k})$.

Let $\kappa>0$ and $\alpha_\kappa :=2
(\underline{G}-\kappa)\log t_F$. Define, for $\delta\in (0,1)$, the event 
\begin{equation}
\tilde\Gamma_{\mathfrak{B}}^z:=\left\{\varphi:
\left|\{ x\in \tbc^z: \tilde\mu_x^z \leq \sqrt{\alpha_\kappa N}\right| \geq \delta |\tbc^z|\right\},
\label{eq:event}
\end{equation}
 and put
$\tilde\Gamma^z = \bigcup_{\mathfrak{B}\in S_{N-j}^\circ(\mathcal{G}_N)} \tilde\Gamma_{\mathfrak{B}}^z$. We have the following analog of Lemma
\ref{lem:firsttermvanishes}:
\begin{lemma}
Let $\gamma\in (0,1)$. Then for $k$ large enough, there exists a constant $\cc{4.1}(\delta,k)$, independent of $N$, such that
\begin{equation}
\mathbb{E}\left(\prod_{x \in \tallrip{N}^z} \mathbb{P}[\varphi_x \geq 0|\mathscr{F}_{\tilde{\mathcal{D}}_N^z}]\cdot
\mathbbm{1}_{\Omega_{\tilde{\mathcal{D}}_N^z}^+ \cap \tilde\Gamma^z} \right) \leq \exp\left(-\cc{4.1} m_F^N
t_F^{-N(1-\gamma)}\right).
\end{equation}
\label{lem:firsttermvanishes2}
\end{lemma}
The proof is essentially identical to that of Lemma \ref{lem:firsttermvanishes}, except that we cannot peg the
height to be anything higher than $\sqrt{\alpha_\kappa N}$ in the event $\tilde\Gamma_{\mathfrak{B}}^z$, due to the unequal
variances amongst the conditioned variables.

At last we can describe how to adapt the proof in Step 1 to the translated conditioning grid. The events
$\Theta_N(\alpha)$, $\bar\Theta_N(\alpha)$, $J_0$ and $J_1$ are as before, except that one replaces
$\mathcal{C}_N$, $\mathcal{D}_N$, and $\mu_x$ with, respectively, $\tilde{\mathcal{C}}_N^z$,
$\tilde{\mathcal{D}}_N^z$, and $\tilde\mu_x^z$, and puts $\alpha<\alpha' < 2\underline{G}\log t_F$. Then by the
aforementioned conditioning argument and Lemma \ref{lem:firsttermvanishes2}, one shows that
$\lim_{N\to\infty}\mathbb{P}(J_0 | \Omega^+_{V_N}) =0$. Similarly, using conditioning and a standard Gaussian
estimate, one finds $\lim_{N\to\infty}\mathbb{P}(J_1 | \Omega^+_{V_N})=0$. Upon varying over all $z\in V_k^\lefthalfcup$ one proves Lemma \ref{lem:nolowerheight}.
\end{proof}

\subsection{Upper bound} \label{subsec:heightub}

In this subsection we prove the upper bound (\ref{eq:heightUB}). The overall strategy is to show that on $\Omega^+_{V_N}$, the coarse-grained averages of
$\varphi_x$ and of $\mu_x$ differ by $\mathcal{O}(1)$ as $N\to\infty$. Since the $\mu_x$ are independent under the
conditioning, we can use standard Gaussian estimates to bound them below uniformly by a threshold
$\sqrt{2\underline{G} \log t_F N}$. It follows that the local sample mean of the actual field $\varphi_x$ is bounded
below by the same threshold plus an $\mathcal{O}(1)$ error. Finally, we invoke the convergence of discrete Green forms (Lemma \ref{lem:VNConv}(ii)) and a capacity argument (like the one used at the end of the proof in \S\ref{subsec:LDUB}) to establish the asymptotic sharpness of the threshold. Our approach is inspired by \cite{Kurt}.

In what follows, we fix an $y\in F$ and an $\epsilon>0$ such that the $\epsilon$ cubic neighborhood of $y$,
$$\eucball=\left\{y'\in F: \max_{1\leq i \leq d} |y'_i-y_i| \leq \epsilon\right\},$$
is contained in $F$. We then let 
$$V_{N,\epsilon}(y) = \left\{z \in V_N: \max_{1\leq i \leq d} \left|z_i-[ \ell_F^N y_i ]\right| \leq \epsilon
\cdot \ell_F^N \right\},$$
and denote by $\mathcal{G}_{N,\epsilon}(y)= (V_{N,\epsilon}(y),\sim)$ the corresponding graph. In essence, $\mathcal{G}_{N,\epsilon}(y)$ (relative to $\mathcal{G}_N$) can be viewed as the graphical approximation of $\eucball$ (relative to $F$). 

The quantity of interest is the average of $\varphi$ over $V_{N,\epsilon}(y)$, \emph{i.e.,} the local sample mean of the free field,
 $$\displaystyle \bar{\varphi}_{N,\epsilon}(y) = \frac{1}{|V_{N,\epsilon}(y)|} \sum_{z \in
V_{N,\epsilon}(y)} \varphi_z.$$ For each $\eta>0$, denote $$\mneta := \left\{\bar{\varphi}_{N,\epsilon}(y)\geq
(\sqrt{2 \underline{G} \log t_F} +\eta)\sqrt{N} \right\}.$$
Our goal is to prove that for any $\eta>0$,
\begin{equation}
\lim_{N\to\infty}\mathbb{P}\left(\mneta~|~ \Omega^+_{V_N}\right)= 0.
\label{eq:excessheight}
\end{equation}

To begin the proof, we fix a sufficiently large $k \in \mathbb{N}$, take a $j\in \mathbb{N}$, and consider all $N>k+j$. Fix
a representative interior point $x_0 \in V_k \backslash \partial \mathcal{G}_k$ as usual. Let $\kappa > 0$, and denote
$\alpha_\kappa=2(\underline{G}-\kappa)\log t_F$. Two events are introduced as follows. The first event is $\tilde\Gamma^z = \bigcup_{\mathfrak{B} \in S_{N-j}^\circ(\mathcal{G}_N)} \tilde\Gamma_{\mathfrak{B}}^z$, where $\tilde\Gamma_{\mathfrak{B}}^z$ is given in (\ref{eq:event}) and is defined for each $\mathfrak{B}\in S_{N-j}^\circ(\mathcal{G}_N)$, $z\in V_k^\lefthalfcup$ and $\delta\in (0,1)$. The second event, defined for each $s>0$ and $z\in V_k^\lefthalfcup$, is
$$\tilde{D}_s^z := \left\{\varphi: \text{there exists an}~ \mathfrak{B} \in S_{N-j}^\circ(\mathcal{G}_N) \text{~such that~}
\frac{1}{|\tbc^z|}\sum_{x\in \tbc^z} (\varphi_x-\tilde\mu_x^z) < -s\right\}.$$

Observe that $\mneta$ equals the disjoint union $(\mneta \cap J_2) \cup (\mneta \cap J_3) \cup (\mneta \cap
J_4)$, where
$$
J_2 := \left(\bigcup_{z \in V_k^\lefthalfcup}\tilde\Gamma^z \right),\quad
J_3 := \left(\bigcap_{z \in V_k^\lefthalfcup}\left(\tilde\Gamma^z\right)^c \right)\cap\left(\bigcup_{z \in V_k^\lefthalfcup}\tilde
D_s^z\right),\quad
J_4 := \left(\bigcap_{z \in V_k^\lefthalfcup}\left(\tilde\Gamma^z\right)^c \right)\cap\left(\bigcap_{z \in V_k^\lefthalfcup} \left(\tilde
D_s^z\right)^c \right).
$$
So the task boils down to proving that each of $\mathbb{P}(\mneta \cap J_2 \cap\Omega^+_{V_N})$,
$\mathbb{P}(\mneta \cap J_3 \cap\Omega^+_{V_N})$, and $\mathbb{P}(\mneta \cap J_4 \cap\Omega^+_{V_N})$ decays
faster than $\mathbb{P}(\Omega_{V_N}^+)$ as $N\to\infty$.

For $J_2$, we combine Lemma \ref{lem:firsttermvanishes2} with a union bound to find that for any $\gamma\in
(0,1)$,
$$\mathbb{P}(J_2 \cap\Omega^+_{V_N}) \leq |V_k^\lefthalfcup| \exp\left(-Cm_F^N t_F^{-N(1-\gamma)}\right),$$
which decays faster than $\mathbb{P}(\Omega^+_{V_N})$.

For $J_3$, we use the fact that under $\mathbb{P}(\cdot| \mathscr{F}_{\tilde{\mathcal{D}}_N^z})$, $\{\varphi_x -
\tilde\mu_x^z: x \in \tbc^z\}$ are independent (though not identically distributed) Gaussian random variables to
find
$$
\mathrm{Var}\left( \frac{1}{|\tbc^z|} \sum_{x \in \tbc^z} \left( \varphi_x - \tilde\mu_x^z\right) \bigg|
\mathscr{F}_{\tilde{\mathcal{D}}_N^z}\right) = \frac{1}{|\tbc^z|^2} \sum_{x \in \tbc^z} \mathrm{Var} \left(
\varphi_x- \tilde\mu_x^z \bigg|\mathscr{F}_{\tilde{\mathcal{D}}_N^z}\right) \leq \frac{1}{|\tbc^z|^2} \cdot
|\tbc^z|\overline{G} = \frac{\overline{G}}{|\tbc^z|}.
$$
By applying a union bound followed by a Gaussian estimate, we see that there exists $z' \in V_k^\lefthalfcup$ such that
\begin{eqnarray*}
\mathbb{P}(J_3 \cap \Omega^+_{V_N}) &\leq& |V_k^\lefthalfcup| \cdot \mathbb{P}\left( \left(\tilde\Gamma^{z'}\right)^c
\cap\tilde D_s^{z'} \cap\Omega^+_{V_N} \right) \\
&\leq& |V_k^\lefthalfcup| \cdot \mathbb{E}\left( \mathbb{P} \left(\exists \mathfrak{B} \in S_{N-j}^\circ(\mathcal{G}_N): \frac{1}{|\tbc^{z'}|} \sum_{x
\in \tbc^{z'}} \left( \varphi_x - \tilde\mu_x^{z'}\right)<-s ~\bigg|~\mathscr{F}_{\tilde{\mathcal{D}}_N^{z'}}
\right)\cdot \mathbbm{1}_{\Omega^+_{\tilde{\mathcal{D}}_N^{z'}} \cap (\tilde \Gamma^{z'})^c} \right) \\
&\leq& |V_k^\lefthalfcup| \cdot \exp\left( \frac{-C s^2  m_F^N}{2 \overline{G}}\right),
\end{eqnarray*}
where $C$ depends on anything but $N$. This decays faster than $\mathbb{P}(\Omega^+_{V_N})$ as $N\to\infty$.
 
It remains to estimate $\mathbb{P}(\mneta \cap J_4 \cap\Omega^+_{V_N})$. Observe that for every $z \in V_k^\lefthalfcup$,
$\mathfrak{B}\in S_{N-j}^\circ(\mathcal{G}_N)$, and $s>0$,
\begin{eqnarray*}
\frac{1}{|\tbc^z|} \sum_{x \in \tbc^z} (\varphi_x-\tilde\mu_x^z) \geq -s &\text{on}& (\tds^z)^c \cap
\Omega^+_{V_N},\\
\frac{1}{|\tbc^z|} \sum_{x \in \tbc^z} \tilde\mu_x^z \geq (1-\delta)\sqrt{\alpha_\kappa N} 
&\text{on}& (\tilde\Gamma^z)^c \cap \Omega^+_{V_N}.
\end{eqnarray*}
Therefore
$$
\frac{1}{|\tbc^z|} \sum_{x \in \tbc^z} \varphi_x = \frac{1}{|\tbc^z|} \sum_{x \in \tbc^z} \tilde\mu_x^z +
\frac{1}{|\tbc^z|} \sum_{x \in \tbc^z} \left( \varphi_x - \tilde\mu_x^z \right) \geq (1-\delta)\sqrt{\alpha_\kappa
N} - s \quad \text{on}\quad (\tilde \Gamma^z)^c \cap(\tilde D_s^z)^c \cap \Omega^+_{V_N}.
$$
We then take the intersection over all $z \in V_k^\lefthalfcup$ to conclude that for every $\mathfrak{B}\in S_{N-j}^\circ(\mathcal{G}_N)$,
\begin{equation} \label{equ20}
\bar{\varphi}_{\mathfrak{B}}:= \frac{1}{|V(\mathfrak{B})|} \sum_{x \in V(\mathfrak{B})} \varphi_x \geq (1-\delta)\sqrt{\alpha_\kappa N} -s \quad \text{on} \quad
J_4 \cap \Omega^+_{V_N}.
\end{equation}
From now on put $s=\mathcal{O}(1)$.

Motivated by \cite[\S 3]{Kurt}, we define, for each $\theta\in [0,1)$ and $\kappa' >0$, the event
$$
C_{\theta,\kappa'}:= \left\{\varphi: \text{~there exist~} \geq  m_F^{(1-\theta)j} ~\text{many}~ \mathfrak{B}_0 \in S_{N-j}^\circ(\vne) \text{~such that~} \bar{\varphi}_{\mathfrak{B}_0} \geq
\left(\sqrt{\alpha_\kappa}+ m_F^{\theta j} \kappa'\right) \sqrt{N} \right\},$$
where
$$S_{N-j}^\circ(\mathcal{G}_{N,\epsilon}(y)) = \left\{\mathfrak{B} \in S_{N-j}^\circ(\mathcal{G}_N) : \mathfrak{B} \subset \mathcal{G}_{N,\epsilon}(y)\right\}.$$
We claim that for every $\eta>0$, there exist $\theta \in [0,1)$ and $\kappa'>0$, both independent of $j$, such that for all sufficiently large $N$,
$$
\mathcal{M}_{N,\eta} \cap J_4 \cap \Omega_{V_N}^+ \quad \text{implies}\quad  \left\{\bar{\varphi}_{\mathfrak{B}} \geq (1- \delta) \sqrt{\alpha_\kappa N}-\mathcal{O}(1)
,~\forall \mathfrak{B} \in \sve{\vne} \right\} \cap C_{\theta,\kappa'}.
$$
The implication of the first event follows from (\ref{equ20}). To see that $\mathcal{M}_{N,\eta}$ implies $C_{\theta,\kappa'}$, we show by contradiction. Suppose for some $\eta>0$, $\mathcal{M}_{N,\eta}$ holds, but $C_{\theta,\kappa'}$ does not hold for any $\theta\in [0,1)$ and $\kappa'>0$. The latter condition says that $\bar\varphi_{\mathfrak{B}} \leq \sqrt{\alpha_\kappa N}$ for all $\mathfrak{B} \in  S_{N-j}^\circ(\mathcal{G}_{N,\epsilon}(y))$. Hence
$$
\bar\varphi_{N,\epsilon}(y) = \frac{1}{|S_{N-j}^\circ(\mathcal{G}_{N,\epsilon}(y))|} \sum_{\mathfrak{B} \in S_{N-j}^\circ(\mathcal{G}_{N,\epsilon}(y))} \bar\varphi_{\mathfrak{B}} \leq \sqrt{\alpha_\kappa N} = \sqrt{2(\underline{G}-\kappa)\log t_F}\sqrt{N}.
$$
However this contradicts the event $\mathcal{M}_{N,\eta}$, which says that $\bar\varphi_{N,\epsilon}(y) \geq (\sqrt{2\underline{G} \log t_F}+\eta)\sqrt{N}$.

Denote by $S_{N-j}^\theta$ the collection of $\mathfrak{B}_0$ in the event $C_{\theta,\kappa'}$. According to the preceding discussion, for every $\eta > 0$, there exist $\theta\in  [0,1)$ and $\kappa' > 0$, independent of  $j$, such that for all sufficiently large $N$,
\begin{eqnarray*}
&&\mathbb{P}(\mathcal{M}_{N,\eta} \cap J_4 \cap \Omega^+_{V_N}) \\ &\leq& \mathbb{P} \left( \left\{\bar{\varphi}_{\mathfrak{B}} \geq (1- \delta) \sqrt{\alpha_\kappa N}-\mathcal{O}(1)
,~\forall \mathfrak{B} \in \sve{\vne} \right\} \cap C_{\theta,\kappa'} \right) \\
&=&\mathbb{P} \left( \left\{\bar{\varphi}_{\mathfrak{B}} \geq (1- \delta) \sqrt{\alpha_\kappa N}- \mathcal{O}(1), ~\forall \mathfrak{B} \in
\sve{\vne} \right\},~\left\{\bar{\varphi}_{\mathfrak{B}_0} \geq (\sqrt{\alpha_\kappa}+m_F^{\theta j}\kappa')\sqrt{N}, ~\forall \mathfrak{B}_0 \in S_{N-j}^\theta\right\} \right)\\
&\leq& \mathbb{P} \left( \sum_{\mathfrak{B} \in \sve{\vne}} f_{\mathfrak{B}} \bar{\varphi}_{\mathfrak{B}} \geq \left[(1-\delta) \sqrt{\alpha_\kappa
N}-\mathcal{O}(1)\right] \sum_{\mathfrak{B} \in \sve{\vne}} f_{\mathfrak{B}} +  m_F^{(1-\theta)j} \kappa' m_F^{\theta j}\sqrt{N}\right),
\end{eqnarray*}
where in the last line we inserted arbitrary $f_{\mathfrak{B}} \geq 0, \mathfrak{B} \in \sve{\vne} \backslash S_{N-j}^\theta$, and fixed $f_{\mathfrak{B}_0}=1$ for all $\mathfrak{B}_0 \in S_{N-j}^\theta$. Since the $\bar\varphi_{\mathfrak{B}}$ are centered
Gaussian variables, we employ a standard estimate to find
\begin{equation}
\mathbb{P}(\mathcal{M}_{N,\eta} \cap J_4 \cap \Omega^+_{V_N})\leq \exp\left( -\frac{\displaystyle \left( \left[(1-\delta) \sqrt{\alpha_\kappa
N}-\mathcal{O}(1)\right] \sum_{\mathfrak{B} \in \sve{\vne}} f_\mathfrak{B} + \cc{4.2} m_F^j \kappa' \sqrt{N}\right) ^ 2}{\displaystyle 2\mathrm{Var} \left( \sum_{\mathfrak{B}
\in \sve{\vne}} f_\mathfrak{B} \bar{\varphi}_\mathfrak{B}\right)}\right),
\label{eq:probmn}
\end{equation}
for some constant $\cc{4.2}$ independent of $N$ and $j$.

Now let $\Xi_j: F\to \mathbb{R}_+$ be defined by $\displaystyle \Xi_j = \sum_{\mathfrak{B}\in \sve{\vne}} f_\mathfrak{B} \mathbbm{1}_{\iota_{N-j}(\mathfrak{B})}$.
(Note that ${\rm supp}(\Xi_j) \subset \eucball$.) Then
$$
\sum_{x\in V_{N,\epsilon(y)}} \Xi_j\left(\frac{x}{\ell_F^N}\right) = \sum_{x\in V_{N,\epsilon}(y)}\sum_{\mathfrak{B}\in S_{N-j}^\circ(\mathcal{G}_{N,\epsilon}(y))} f_{\mathfrak{B}}\mathbbm{1}_{\iota_{N-j}(\mathfrak{B})}\left(\frac{x}{\ell_F^N}\right) \leq |V_{N-j}| \left(\sum_{\mathfrak{B}\in S_{N-j}^\circ(\mathcal{G}_{N,\epsilon}(y))} f_{\mathfrak{B}}\right),
$$
and
$$
 \sum_{x\in V_{N,\epsilon}(y)}\Xi_j\left(\frac{x}{\ell_F^N}\right) \varphi_x = \sum_{x\in V_{N,\epsilon}(y)}\sum_{\mathfrak{B}\in S_{N-j}^\circ(\mathcal{G}_{N,\epsilon}(y))} f_{\mathfrak{B}} \varphi_x\mathbbm{1}_{\iota_{N-j}(\mathfrak{B})}\left(\frac{x}{\ell_F^N}\right) = \sum_{\mathfrak{B}\in S_{N-j}^\circ(\mathcal{G}_{N,\epsilon}(y))} f_{\mathfrak{B}} \sum_{x\in \mathfrak{B}} \varphi_x.
$$
Consequently,
\begin{eqnarray*}
{\rm Var}\left(\sum_{\mathfrak{B}\in \sve{\vne}} f_\mathfrak{B} \bar\varphi_\mathfrak{B}\right) &\leq& \frac{1}{|V_{N-j}^\lefthalfcup|^2}\sum_{x,x' \in V_{N,\epsilon}(y)}
G_{\mathcal{G}_\infty}(x,x') \Xi_j\left(\frac{x}{\ell_F^N}\right) \Xi_j\left(\frac{x'}{\ell_F^N}\right) \\
&\leq&  \frac{\cc{4.3}}{|V_{N-j}|^2}\sum_{x,x' \in V_N}
G_{\mathcal{G}_\infty}(x,x') \Xi_j\left(\frac{x}{\ell_F^N}\right) \Xi_j\left(\frac{x'}{\ell_F^N}\right)
\end{eqnarray*}
for some constant $\cc{4.3}$ independent of $j$ and $N$. Plugging these into (\ref{eq:probmn}) and then applying Lemma \ref{lem:VNConv}(ii), we find
\begin{eqnarray}
\nonumber&&\varlimsup_{N\to\infty} \frac{\log \mathbb{P}(\mathcal{M}_{N,\eta} \cap J_4 \cap \Omega^+_{V_N})}{\rho_F^{-N} N \log t_F} \\
\nonumber&\leq&  \varlimsup_{N\to\infty}\left[-\frac{\displaystyle
\left(\left[(1-\delta)\sqrt{\underline{G}-\kappa}-\mathcal{O}\left(\frac{1}{\sqrt{N}}\right)\right]\cdot\frac{1}{|V_{N-j}|}\sum_{x\in
V_N} \Xi_j\left(\frac{x}{\ell_F^N}\right)+\frac{\cc{4.2} m_F^j \kappa'}{\sqrt{2\log t_F}}\right)^2}{\displaystyle \cc{4.3} \frac{\rho_F^{-N}}{|V_{N-j}|^2}
\sum_{x,x'\in V_N}G_{\mathcal{G}_\infty}(x,x')\Xi_j\left(\frac{x}{\ell_F^N}\right)\Xi_j\left(\frac{x'}{\ell_F^N}\right) }\right]\\
\nonumber&\leq& - \frac{\displaystyle \varliminf_{N\to\infty}
\left(\left[(1-\delta)\sqrt{\underline{G}-\kappa}-\mathcal{O}\left(\frac{1}{\sqrt{N}}\right)\right]\cdot\frac{1}{|V_N|}\sum_{x\in
V_N} \Xi_j\left(\frac{x}{\ell_F^N}\right)+\frac{\cc{4.4} \kappa'}{\sqrt{2\log t_F}} \right)^2}{\displaystyle \cc{4.3} \varlimsup_{N\to\infty}\frac{\rho_F^{-N}}{|V_N|^2}
\sum_{x,x'\in V_N}G_{\mathcal{G}_\infty}(x,x')\Xi_j\left(\frac{x}{\ell_F^N}\right)\Xi_j\left(\frac{x'}{\ell_F^N}\right) }\\
\label{ineq:conv} &\leq& -\frac{1}{\cc{4.5}} \cdot \frac{\displaystyle
\left((1-\delta)\sqrt{\underline{G}-\kappa}\langle \mathbbm{1}_F,\Xi_j \nu
\rangle_F+\frac{\cc{4.4}\kappa'}{\sqrt{2\log t_F}} \right)^2}{\mathcal{E}(U(\Xi_j \nu), U(\Xi_j \nu))}.
\end{eqnarray}
Note that going from the second to the third line, we multiply both sides of the fraction by $(|V_{N-j}|/|V_N|)^2$, which is of order $m_F^{2j}$ and independent of $N$. So the constants $\cc{4.4}$ is independent of $j$ and $N$. Also $\cc{4.5} = \cc{4.3} \cc{2.6}$ is independent of $j$ and $N$.

Following the end of \S\ref{subsec:LDUB}, one would expect to optimize the coefficients $f_{\mathfrak{B}}$ and take the $j\to\infty$ limit to retrieve the capacity in the RHS of (\ref{ineq:conv}). But we decided prior to (\ref{eq:probmn}) on fixing the coefficients $f_{\mathfrak{B}}=1$ when $\mathfrak{B} \in S^\theta_{N-j}$, while leaving the remaining $f_{\mathfrak{B}}$ variable. Let us write
$$\Xi_j = \left(\sum_{\mathfrak{B} \in S_{N-j}^\circ(\mathcal{G}_{N,\epsilon}(y)) \backslash S_{N-j}^\theta} f_{\mathfrak{B}} \mathbbm{1}_{\iota_{N-j}(\mathfrak{B})}\right) + \sum_{\mathfrak{B}_0\in S_{N-j}^\theta}\mathbbm{1}_{\iota_{N-j}(\mathfrak{B}_0)} =: \Xi_{j,0} + \mathbbm{1}_{\iota_{N-j}(S_{N-j}^\theta)},$$
where the $f_{\mathfrak{B}}\geq 0$ are arbitrary. 

We claim that as $j\to\infty$, uniformly in $N$, the set $\iota_{N-j}(S^\theta_{N-j})$ has vanishing capacity with respect to the Dirichlet form $\mathcal{E}$, so that $\Xi_j$ can be replaced by $\Xi_{j,0}$ in the RHS of (\ref{ineq:conv}). To see this, we apply the Cauchy-Schwarz inequality on the inner product $\mathcal{E}$ to find
\begin{eqnarray*}
&&\mathcal{E}\left(U(\Xi_j\nu), U(\Xi_j\nu)\right)\\
&=& \mathcal{E}\left(U(\Xi_{j,0}\nu), U(\Xi_{j,0}\nu)\right)  + 2\cdot\mathcal{E}\left(U(\Xi_{j,0}\nu), U(\mathbbm{1}_{\iota_{N-j}(S_{N-j}^\theta)}\nu)\right) +\mathcal{E}\left(U(\mathbbm{1}_{\iota_{N-j}(S_{N-j}^\theta)}\nu), U(\mathbbm{1}_{\iota_{N-j}(S_{N-j}^\theta)}\nu)\right)\\
&\leq& \left(\left[\mathcal{E}(U(\Xi_{j,0}\nu), U(\Xi_{j,0}\nu))\right]^{1/2} + \left[\mathcal{E}\left(U(\mathbbm{1}_{\iota_{N-j}(S_{N-j}^\theta)}\nu), U(\mathbbm{1}_{\iota_{N-j}(S_{N-j}^\theta)}\nu)\right)\right]^{1/2}\right)^2\\
&=:& (\sqrt{K_1} + \sqrt{K_2})^2.
\end{eqnarray*}
The key estimate is on $K_2$. Let $Q_j$ denote the closure of any one of the $\iota_{N-j}(\mathfrak{B}_0)$, which is a subset of $\eucball$ contained in a hypercube of side $\ell_F^{-j}$. Also let $G: F_\infty \times F_\infty \to\mathbb{R}_+$ be the integral kernel associated with $U$. By \cite[Corollary 6.13(a)]{BB99}, there exists $\cc{8}$ such that $G(x,x') \leq \cc{8}\|x-x'\|^{d_w-d_h}$ for all $x,x' \in F_\infty$. Since $|S^\theta_{N-j}| \leq m_F^j$, we have the upper bound
\begin{eqnarray*}
K_2 &\leq& m_F^j \int_{Q_j \times Q_j} G(x,x')d\nu(x)d\nu(x')  ~\leq~ C m_F^j \int_{Q_j\times Q_j}\frac{d\nu(x) d\nu(x')}{\|x-x'\|^{d_h-d_w}} \\ &\leq& C m_F^j \int_{Q\times Q}\frac{d\nu(\ell_F^{-j}x)d\nu(\ell_F^{-j}x')}{(\ell_F^{-j}\|x-x'\|)^{d_h-d_w}} 
~\leq~ C m_F^j \ell_F^{-j (d_h+d_w)} \int_{Q\times Q} \frac{d\nu(x)d\nu(x')}{\|x-x'\|^{d_h-d_w}} ~=~\mathcal{O}(t_F^{-j}),
\end{eqnarray*}
where $Q$ is a cubic region of side $\mathcal{O}(1)$, and the last integral is finite by the same argument as in the proof of Lemma \ref{lem:VNConv}(i). This shows the vanishing capacity of $\iota_{N-j}(S^\theta_{N-j})$ uniformly in $N$. Hence as $j\to\infty$,
\begin{equation}
\mathcal{E}(U(\Xi_j\nu), U(\Xi_j\nu)) = \mathcal{E}(U(\Xi_{j,0}\nu), U(\Xi_{j,0}\nu)) + \mathcal{O}(t_F^{-j/2})
\label{eq:approxgreen}
\end{equation}
uniformly in $N$.

We can now bound the RHS of (\ref{ineq:conv}) from above. First use the trivial inequality $\langle \mathbbm{1}_F, \Xi_j\nu\rangle_F \geq \langle \mathbbm{1}_F, \Xi_{j,0}\nu\rangle_F$ and apply it to the numerator. Then plugging (\ref{eq:approxgreen}) into the denominator, we get
\begin{eqnarray}
\varlimsup_{N\to\infty} \frac{\log \mathbb{P}(\mathcal{M}_{N,\eta} \cap J_4 \cap \Omega^+_{V_N})}{\rho_F^{-N} N \log t_F}  
&\leq&  -\frac{1}{\cc{4.5}} \cdot \frac{\displaystyle
\left((1-\delta)\sqrt{\underline{G}-\kappa}\langle \mathbbm{1}_F,\Xi_{j,0} \nu
\rangle_F+\frac{\cc{4.4} \kappa'}{\sqrt{2\log t_F}} \right)^2}{\mathcal{E}(U(\Xi_{j,0} \nu), U(\Xi_{j,0} \nu))}.
\label{eq:finalupperbound}
\end{eqnarray}
Then we can take $j$ arbitrarily large, so that $\Xi_{j,0}$ is a nonnegative function supported on $\eucball$ minus a set of arbitrarily small capacity.  By the Yosida approximation (Proposition \ref{prop:Yosida}), we may then replace $\Xi_{j,0} \nu$ by any $\mu \in S_0^{(0)}$ with the same maximal support set.

Now comes the simple but crucial rescaling argument:
$$
\mathcal{E}(U\mu, U\mu) =\gamma \cdot (\gamma\mathcal{E})(U^\gamma \mu, U^\gamma \mu)\quad \text{for each}~\gamma>0~\text{and for all}~\mu\in S_0^{(0)},
$$
where $U^\gamma = \gamma^{-1}U$ is the $0$-order potential operator associated with $\gamma\mathcal{E}$. Thus the RHS of (\ref{eq:finalupperbound}) can be rewritten as 
$$
-\frac{1}{\gamma\cc{4.3}} \cdot \frac{\displaystyle
\left((1-\delta)\sqrt{\underline{G}-\kappa}\langle \mathbbm{1}_F, \mu
\rangle_F+ \frac{\cc{4.4} \kappa'}{\sqrt{2\log t_F}}\right)^2}{(\gamma\mathcal{E})(U^\gamma \mu, U^\gamma \mu)}.
$$
Fix a compact set $\mathcal{K}$ within $\eucball$ minus the aforementioned set of capacity zero, and choose $\mu$ to be $\mu_\mathcal{K}$, the $0$-order equilibrium measure of $\mathcal{K}$ with respect to $\gamma \mathcal{E}$.  According to Proposition \ref{prop:capacity}, 
$\langle \mathbbm{1}_F,\mu_{\mathcal{K}}\rangle_F = (\gamma\mathcal{E})(U^\gamma \mu_\mathcal{K}, U^\gamma \mu_\mathcal{K})={\rm
Cap}_{\gamma\mathcal{E}}(\mathcal{K})$. Upon taking $\delta,\kappa\to 0$, and combining with the lower bound of
Theorem \ref{thm:LD} (see also Remark \ref{rem:LDLB}), we arrive at
\begin{eqnarray}
\nonumber
&& \varlimsup_{N\to\infty} \frac{\log\mathbb{P}(\mneta \cap J_4 | \Omega^+_{V_N})}{\rho_F^{-N}N\log t_F}\\ \nonumber
&\leq& \varlimsup_{N\to\infty} \frac{\log \mathbb{P}(\mathcal{M}_{N,\eta} \cap J_4 \cap \Omega^+_{V_N})}{\rho_F^{-N} N \log t_F} - \varliminf_{N\to\infty}
\frac{\log \mathbb{P}(\Omega^+_{V_N})}{\rho_F^{-N} N \log t_F}\\
\label{ineq:condprobheight} &\leq& -\frac{1}{\gamma\cc{4.3}}\left[\underline{G}{\rm Cap}_{\gamma\mathcal{E}}(\mathcal{K}) + \frac{\sqrt{2 \underline{G}}
(\cc{4.4}\kappa') }{\sqrt{\log t_F}} + \frac{(\cc{4.4}\kappa')^2}{2\log t_F {\rm
Cap}_{\gamma\mathcal{E}}(\mathcal{K})}\right] + \gamma^{-1} \cc{1.3} \overline{G} {\rm Cap}_{\gamma\mathcal{E}}(F).
\end{eqnarray}
Solving a quadratic inequality shows that the RHS of (\ref{ineq:condprobheight}) is negative if
$$\kappa' > \cc{4.4}^{-1}\cdot{\rm Cap}_{\gamma\mathcal{E}}(\mathcal{K})\cdot\sqrt{2 \log t_F} \cdot \left(\sqrt{\cc{1.3} \cc{4.3} \overline{G} \cdot\frac{{\rm Cap}_{\gamma\mathcal{E}}(F)}{{\rm Cap}_{\gamma \mathcal{E}}(\mathcal{K})}}-\sqrt{\underline{G}} \right).$$
Observe that ${\rm Cap}_{\gamma \mathcal{E}}(\mathcal{K})$ depends linearly on $\gamma$, while the rest of the expression on the RHS is manifestly independent of $\gamma$. So by tuning $\gamma$, we can make $\kappa' > \Delta$ for any
$\Delta>0$, and thus
$$\varlimsup_{N\to\infty} \frac{\log \mathbb{P}(\mneta \cap J_4| \Omega^+_{V_N})}{\rho_F^{-N} N \log t_F}< 0
$$
for any $\eta>0$. This proves (\ref{eq:excessheight}) for any $y\in F$ and $\epsilon>0$, whence
(\ref{eq:heightUB}).

\begin{appendices}

\section{Appendix}

\subsection{Smooth measures and capacity}

In this subsection we introduce the concepts of smooth measures and ($0$-order) capacity with respect to a (transient) regular Dirichlet form. Much of this can be found in \cite[Chapter 2]{FOT} and \cite[Chapter 2]{ChenFukushima}.

Suppose $(\mathcal{E},\mathcal{F})$ is a regular Dirichlet form on $L^2(X,m)$. Let $\mathfrak{O}$ denote the
family of all open subsets of $X$, and for each $A\in\mathfrak{O}$, define $\mathcal{L}_A = \{u\in \mathcal{F}:
u\geq 1 ~m\text{-a.e. on}~A\}$. The $1$-capacity of the set $A \in \mathfrak{O}$ with respect to $\mathcal{E}$ is given by
\begin{equation}
{\rm Cap}_{\mathcal{E},1}(A) =\left\{ \begin{array}{ll} \inf_{f\in \mathcal{L}_A} \left[\mathcal{E}(f,f)+\|f\|_{L^2}^2\right],& \mathcal{L}_A
\neq \emptyset \\ \infty,& \mathcal{L}_A=\emptyset \end{array} \right. .
\label{eq:defcapacity}
\end{equation}
If $A\subset X$ is an arbitrary subset, then put ${\rm Cap}_{\mathcal{E},1}(A) = \inf_{B\in\mathfrak{O},A\subset B} {\rm
Cap}_{\mathcal{E},1}(B)$. A statement is said to hold \emph{quasi-everywhere} (\emph{q.e.}) on $A$ if and only if there exists a
set $U \subset A$ with ${\rm Cap}_{\mathcal{E},1}(U)=0$ such that the statement holds everywhere
on $A\backslash U$. A function $f:X\to\mathbb{R}$ is said to be \emph{quasi-continuous} if for every $\epsilon>0$, there exists an open set $\Omega$ with ${\rm Cap}_{\mathcal{E},1}(\Omega)<\epsilon$ such that $f$ is continuous on $X\backslash \Omega$. We say that $v$ is a \emph{quasi-continuous modification} of $f$ if $v$ is quasi-continuous and $v=f~m$-a.e, and denote $v$ by $\widetilde f$.

A positive Radon measure $\mu$ on $X$ is
called a \emph{measure of finite energy integral} (with respect to $\mathcal{E}$) if there exists a constant
$C_\mu>0$ such that for all $f \in \mathcal{F} \cap C_c(X)$,
\begin{equation}
\int_X |f|d\mu \leq C_\mu \left[\mathcal{E}(f,f)+\|f\|_{L^2}^2\right]^{1/2}.
\label{eq:FEI}
\end{equation}
We denote by $S_0$ the family of all measures of finite energy integral.

If furthermore $(\mathcal{E},\mathcal{F})$ is transient, that is, if there exists a bounded $m$-integrable function $g$, strictly positive $m$-a.e., such that
$$
\int_X \, |u| g\,dm \leq \sqrt{\mathcal{E}(u,u)}, \quad \forall u \in \mathcal{F},
 $$
  then one may complete $\mathcal{F}$ in the $\mathcal{E}$-norm, and  $(\mathcal{F}_e := \overline{\mathcal{F}}^{\mathcal{E}}, \mathcal{E})$ is a Hilbert space called the \emph{extended Dirichlet space}. Consequently, we have the following $0$-order counterparts of the above notions: the \emph{$0$-capacity} of a set $A\in \mathfrak{O}$, denoted by ${\rm Cap}_{\mathcal{E}}(A)$, is given by (\ref{eq:defcapacity}) with $\mathcal{F}$ and $\mathcal{E}(f,f)+\|f\|_{L^2}^2$ replaced respectively by $\mathcal{F}_e$ and $\mathcal{E}(f,f)$. The $0$-capacity of an arbitrary set $A$ then follows similarly. Likewise, a positive Radon
measure $\mu$ on $X$ is called a \emph{measure of finite $0$-order energy integral} if (\ref{eq:FEI}) holds with the same replacements. Denote by $S_0^{(0)}$ the family of all measures of finite $0$-order energy integral.

There is an important connection between $S_0^{(0)}$ and $\mathcal{F}_e$, which is based on the Riesz representation theorem. For every
$\mu \in S_0^{(0)}$, there exists a unique $U\mu \in \mathcal{F}_e$ such that $\mathcal{E}(f,U\mu)= \langle
\widetilde f,\mu\rangle_X$ for all $f \in \mathcal{F}_e$. We shall refer to $U: S_0^{(0)} \to \mathcal{F}_e$ as the $0$-order \emph{potential operator} associated with $\mathcal{E}$. Any $h \in
\mathcal{F}_e$ which can be written in the form $h=U\mu$ for some $\mu \in S_0^{(0)}$ is called a $0$-order
\emph{potential} relative to $\mathcal{E}$.

Let us remark that $S_0^{(0)} \subset S_0 \subset S$, where $S$ is the family of \emph{smooth measures} consisting of all positive Borel measures $\mu$ on $X$ such that:
\begin{itemize}
\item $\mu$ charges no set of zero 1-capacity.
\item There exists an increasing sequence $(F_n)_n$ of closed sets such that $\mu(F_n) <\infty$ for all $n$, and that $\lim_{n\to\infty}{\rm Cap}_{\mathcal{E},1}(K\backslash F_n)=0$ for any compact set $K$.
\end{itemize}
In general, elements of $S_0$ need not be absolutely continuous with respect to $m$, but each of them can be approximated by a sequence of absolutely continuous measures, \emph{cf.} \cite[Lemma 2.2.2]{FOT}. Here we give the $0$-order version of this statement.

\begin{proposition}
Let $(\mathcal{E},\mathcal{F})$ be a transient regular Dirichlet form on $L^2(X,m)$, and let $G_\beta$ and $U$ denote respectively the $\beta$-resolvent and the $0$-order potential operator associated with $\mathcal{E}$. Given each $\mu \in S_0^{(0)}$, let $h_\beta :=\beta(U\mu-\beta G_\beta (U\mu))$ for each $\beta \in \mathbb{N}$. Then as $\beta \to\infty$, $h_\beta \cdot m$ converges vaguely to $\mu$.
\label{prop:Yosida}
\end{proposition}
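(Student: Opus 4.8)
The plan is to mimic the classical Yosida-approximation argument from \cite{FOT}*{Lemma 2.2.2}, transported to the $0$-order setting. First I would record the basic identities relating $U$, $G_\beta$, and the $0$-order potential. For $\mu \in S_0^{(0)}$ we have $U\mu \in \mathcal{F}_e$ characterized by $\mathcal{E}(f, U\mu) = \langle \widetilde f, \mu\rangle_X$ for all $f\in \mathcal{F}_e$. The key resolvent identity is that $h_\beta = \beta(U\mu - \beta G_\beta(U\mu))$ is, up to the constant $\beta$, the $\beta$-resolvent applied to an element obtained from $U\mu$; more precisely, using $\beta G_\beta U\mu - U\mu = -G_\beta(\text{something})$, one shows $\beta(1 - \beta G_\beta)U\mu = \beta G_\beta \mu$ in the appropriate weak sense, so that $h_\beta \cdot m = \beta G_\beta\mu$ as measures (here $G_\beta \mu$ is the $\beta$-potential of the measure $\mu$, a function, which is why $h_\beta$ is indeed a density against $m$). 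This reduces the claim to: $\beta G_\beta\mu \cdot m \to \mu$ vaguely as $\beta\to\infty$.

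Next I would verify the vague convergence by testing against an arbitrary $g \in C_c(X)$. Write $\int_X g \cdot (\beta G_\beta\mu)\, dm = \int_X (\beta G_\beta g)\, d\mu$ by the symmetry of the resolvent (self-adjointness of $G_\beta$ on $L^2(X,m)$, extended to the pairing with the Radon measure $\mu$). Now $\beta G_\beta g \to g$ in $L^2(X,m)$ as $\beta\to\infty$ — this is the standard strong-continuity property of resolvents — but I need convergence of $\int (\beta G_\beta g)\, d\mu$, i.e. convergence in a topology compatible with integration against $\mu$. The standard trick is: (i) $\beta G_\beta g$ is uniformly bounded (by $\|g\|_\infty$, since $G_\beta$ is sub-Markovian: $\beta G_\beta 1 \le 1$); (ii) along a subsequence $\beta G_\beta g$ converges $\mathcal{E}$-quasi-everywhere to $g$, because convergence in the $\mathcal{E}_\beta$-norm (which one gets since $g$ can be taken in $\mathcal{F}$, or by a density argument) implies quasi-everywhere convergence along a subsequence; and (iii) $\mu$ charges no set of zero capacity (as $\mu \in S_0^{(0)} \subset S_0 \subset S$). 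Then dominated convergence with respect to $\mu$ (using that $\mu$ is finite on compacts and the integrands are bounded and supported in a fixed compact set containing $\mathrm{supp}(g)$ suitably enlarged — one may need to also control the tails of $\beta G_\beta g$ outside $\mathrm{supp}(g)$, which again follows from the bound $\beta G_\beta|g| \le \|g\|_\infty \beta G_\beta \mathbbm{1}_K$ and integrability of the latter against $\mu$) gives $\int (\beta G_\beta g)\, d\mu \to \int g\, d\mu$. Since every subsequence has a further subsequence along which this holds, the full limit exists.

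The main obstacle, I expect, is the step controlling $\int (\beta G_\beta g)\, d\mu$ away from $\mathrm{supp}(g)$ and justifying the passage to the limit without a global integrability bound a priori: one must exploit that $\mu\in S_0^{(0)}$ means $\mu$ has finite $0$-order energy integral, hence $\int U(\text{compactly supported smooth measure})\, d\mu < \infty$-type estimates are available, and that $\beta G_\beta g \le U\nu$ for a suitable $\nu \in S_0^{(0)}$ dominating (a multiple of) $g\cdot m$; then $\int \beta G_\beta g\, d\mu \le \int U\nu\, d\mu = \mathcal{E}(U\nu, U\mu) < \infty$ uniformly in $\beta$, supplying the dominating function needed for the limit. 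Assembling these pieces — the resolvent identity reducing to $\beta G_\beta\mu\cdot m$, the sub-Markovian bound, the quasi-everywhere subsequential convergence, and the uniform energy domination — completes the proof. Once vague convergence against all $g\in C_c(X)$ is established, the statement follows by definition of vague convergence of measures.
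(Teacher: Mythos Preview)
Your reduction to showing $\int_X (\beta G_\beta g)\,d\mu \to \int_X g\,d\mu$ is correct, and the identity $h_\beta = \beta U_\beta\mu$ (your ``$\beta G_\beta\mu$'') follows from the resolvent equation $U\mu = U_\beta\mu + \beta G_\beta U\mu$. However, your passage to the limit via quasi-everywhere subsequential convergence plus dominated convergence has a real gap: the dominating function you propose, $\beta G_\beta g \le U\nu$ uniformly in $\beta$, is not justified. From $U_\beta(g\cdot m)\le U(g\cdot m)$ you only get $\beta G_\beta g \le \beta\, U(g\cdot m)$, which blows up in $\beta$; and the sub-Markov bound $|\beta G_\beta g|\le \|g\|_\infty$ is $\mu$-integrable only when $\mu$ is finite, which $\mu\in S_0^{(0)}$ does not guarantee.

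The paper's proof bypasses all of this with a single identity. For $f\in\mathcal{F}\cap C_c(X)$ one has directly
\[
(h_\beta,f)_{L^2(m)} \;=\; \beta\bigl(U\mu-\beta G_\beta(U\mu),\,f\bigr)_{L^2(m)} \;=\; \mathcal{E}^{(\beta)}(U\mu,f),
\]
where $\mathcal{E}^{(\beta)}(u,v):=\beta(u-\beta G_\beta u,v)$ is the Yosida approximating form (\cite{FOT}*{(1.3.18)}). The standard fact $\mathcal{E}^{(\beta)}(u,f)\to\mathcal{E}(u,f)$ for $u\in\mathcal{F}_e$, $f\in\mathcal{F}$ then gives $(h_\beta,f)_{L^2}\to\mathcal{E}(U\mu,f)=\langle \widetilde f,\mu\rangle_X$, which is the vague convergence tested against the core $\mathcal{F}\cap C_c(X)$. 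Notice that within your own framework the same shortcut is available: once you write $\int \widetilde{\beta G_\beta g}\,d\mu = \mathcal{E}(U\mu,\beta G_\beta g)$, the convergence $\beta G_\beta g\to g$ in $\mathcal{E}$-norm (for $g\in\mathcal{F}$) finishes the job immediately by Cauchy--Schwarz, with no need for q.e.\ convergence or a dominating function.
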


\begin{proof}
This is the Yosida approximation (\emph{cf.} \cite[(1.3.18)]{FOT}): for all $f\in \mathcal{F}$,
$$
(h_\beta,f)_{L^2(m)} = (\beta(U\mu-\beta G_\beta(U\mu)),f)_{L^2(m)} \underset{\beta\to\infty}{\longrightarrow} \mathcal{E}(U\mu,f).
$$
Therefore $\lim_{\beta\to\infty}\langle f, h_\beta\cdot m\rangle_X = \langle f, \mu\rangle_X$ for all $f\in \mathcal{F} \cap C_c(X)$.
\end{proof}

Last but not least, let us record several equivalent characterizations of the $0$-capacity.

\begin{proposition}
Let $(\mathcal{E},\mathcal{F})$ be a transient regular Dirichlet form on $L^2(X,m)$. Fix an arbitrary set $B \subset X$ and suppose $\mathcal{L}_B \neq \emptyset$.
\begin{enumerate}[label={(\roman*)}]
\item There exists a unique element $e_B$ in $\mathcal{L}_B$ minimizing $\mathcal{E}(\cdot,\cdot)$. In
particular, ${\rm Cap}_{\mathcal{E}}(B) = \mathcal{E}(e_B,e_B)$.
\item $e_B$ is the unique element of $\mathcal{F}_e$ satisfying $\widetilde e_B=1$ q.e. on $B$ and
$\mathcal{E}(e_B,f)\geq 0$ for any $f\in \mathcal{F}_e$ with $\widetilde f\geq 0$ q.e. on $B$.
\item There exists a unique measure $\mu_B \in S_0^{(0)}$ supported in $B$ such that $e_B = U\mu_B$. In
particular, 
$${\rm Cap}_{\mathcal{E}}(B) = \mathcal{E}(U\mu_B, U\mu_B) =\langle
\widetilde{U\mu_B},\mu_B\rangle_X.$$
\item If $B$ is a compact set, then
\begin{eqnarray*}
{\rm Cap}_\mathcal{E}(B) ~=~ \langle \mathbbm{1}_B, \mu_B\rangle_X
&=& \sup\left\{\mathcal{E}(U\mu, U\mu): \mu \in S_0^{(0)},~{\rm supp}(\mu)\subset B,~\widetilde{U\mu} \leq 1~\text{q.e.} \right\}\\
 &=& \sup\left\{\frac{\langle \mathbbm{1}_B,\mu
\rangle_X^2}{\mathcal{E}(U\mu, U\mu)}: \mu\in S_0^{(0)},~{\rm supp}(\mu)\subset B \right\}.
\end{eqnarray*}
\end{enumerate}
\label{prop:capacity}
\end{proposition}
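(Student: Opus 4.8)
The plan is to follow the classical potential theory of regular Dirichlet forms as in \cite{FOT}*{Chapter 2} and \cite{ChenFukushima}*{Chapter 2}, transcribed into the $0$-order (transient) setting, where the extended Dirichlet space $(\mathcal{F}_e,\mathcal{E})$ is a Hilbert space and the roles of $\mathcal{F}$, $\mathcal{E}+\|\cdot\|_{L^2}^2$, and ${\rm Cap}_{\mathcal{E},1}$ are played by $\mathcal{F}_e$, $\mathcal{E}$, and ${\rm Cap}_{\mathcal{E}}$. For (i), I would first treat an open set $A$ with $\mathcal{L}_A\neq\emptyset$: the set $\mathcal{L}_A^e:=\{u\in\mathcal{F}_e:\widetilde u\geq 1~\text{q.e. on}~A\}$ is a nonempty closed convex subset of $(\mathcal{F}_e,\mathcal{E})$ -- closedness because $\mathcal{E}$-convergence forces q.e.\ convergence along a subsequence of quasi-continuous modifications -- so it has a unique element $e_A$ of minimal $\mathcal{E}$-norm, and the Markovian property gives $0\le\widetilde e_A\le 1$ q.e.; thus ${\rm Cap}_{\mathcal{E}}(A)=\mathcal{E}(e_A,e_A)$. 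For a general set $B$ I would take open $A_n\supset B$ with ${\rm Cap}_{\mathcal{E}}(A_n)\downarrow{\rm Cap}_{\mathcal{E}}(B)$; the $e_{A_n}$ are $\mathcal{E}$-bounded, and a weak-compactness (Banach--Saks) averaging argument produces the limiting minimizer $e_B$.

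For (ii), I would derive the Euler--Lagrange inequality for the constrained minimization: convexity makes $e_B+t(f-e_B)\in\mathcal{L}_B^e$ for $t\in[0,1]$ whenever $f\in\mathcal{L}_B^e$, and minimality gives $\mathcal{E}(e_B,f-e_B)\geq 0$ for all $f\in\mathcal{L}_B^e$; testing with $f=e_B+g$ for $\widetilde g\geq 0$ q.e.\ on $B$ yields $\mathcal{E}(e_B,g)\geq 0$, while replacing $e_B$ by $e_B\wedge 1$ (which lowers $\mathcal{E}$ and stays admissible) forces $\widetilde e_B=1$ q.e.\ on $B$; uniqueness follows by adding $\mathcal{E}(e_B,e'-e_B)\geq 0$ and $\mathcal{E}(e',e_B-e')\geq 0$ to get $\mathcal{E}(e_B-e',e_B-e')\leq 0$. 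For (iii), part (ii) shows $f\mapsto\mathcal{E}(e_B,f)$ is a nonnegative $\mathcal{E}$-continuous linear functional on $\mathcal{F}_e$, so the $0$-order analogue of the Riesz-type representation \cite{FOT}*{Theorem 2.2.1} produces a unique $\mu_B\in S_0^{(0)}$ with $\mathcal{E}(e_B,f)=\langle\widetilde f,\mu_B\rangle_X$ for all $f\in\mathcal{F}_e$, i.e.\ $e_B=U\mu_B$; its support lies in $B$ because any $f\in\mathcal{F}_e\cap C_c(X)$ with support disjoint from $B$ is $0$ q.e.\ on $B$, so $\pm f$ are admissible in (ii) and $\langle\widetilde f,\mu_B\rangle_X=\mathcal{E}(e_B,f)=0$. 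Then ${\rm Cap}_{\mathcal{E}}(B)=\mathcal{E}(e_B,e_B)=\mathcal{E}(U\mu_B,e_B)=\langle\widetilde e_B,\mu_B\rangle_X=\langle\widetilde{U\mu_B},\mu_B\rangle_X$.

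For (iv), compactness of $B$ gives $\widetilde e_B=\mathbbm{1}$ q.e.\ on $B\supset{\rm supp}(\mu_B)$, hence $\langle\widetilde{U\mu_B},\mu_B\rangle_X=\langle\mathbbm{1}_B,\mu_B\rangle_X={\rm Cap}_{\mathcal{E}}(B)$. For the first supremum: if $\mu\in S_0^{(0)}$ with ${\rm supp}(\mu)\subset B$ and $\widetilde{U\mu}\le 1$ q.e., then
\[
\mathcal{E}(U\mu,U\mu)=\langle\widetilde{U\mu},\mu\rangle_X\le\mu(B)=\langle\widetilde e_B,\mu\rangle_X=\mathcal{E}(e_B,U\mu)\le\mathcal{E}(e_B,e_B)^{1/2}\mathcal{E}(U\mu,U\mu)^{1/2},
\]
so $\mathcal{E}(U\mu,U\mu)\le{\rm Cap}_{\mathcal{E}}(B)$, with equality at $\mu=\mu_B$ (which satisfies $\widetilde{U\mu_B}=\widetilde e_B\le 1$ q.e.). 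For the second supremum, Cauchy--Schwarz gives, for any $\mu\in S_0^{(0)}$ with ${\rm supp}(\mu)\subset B$,
\[
\langle\mathbbm{1}_B,\mu\rangle_X=\langle\widetilde e_B,\mu\rangle_X=\mathcal{E}(e_B,U\mu)\le{\rm Cap}_{\mathcal{E}}(B)^{1/2}\,\mathcal{E}(U\mu,U\mu)^{1/2},
\]
hence $\langle\mathbbm{1}_B,\mu\rangle_X^2/\mathcal{E}(U\mu,U\mu)\le{\rm Cap}_{\mathcal{E}}(B)$, again with equality at $\mu=\mu_B$.

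The main obstacle is not any single estimate but the careful bookkeeping of quasi-continuity and q.e.\ statements -- verifying that $\mathcal{L}_B^e$ is $\mathcal{E}$-closed, justifying the passage from open sets to arbitrary $B$ in (i), and pinning down ${\rm supp}(\mu_B)\subset B$ in (iii). Each of these is handled in \cite{FOT,ChenFukushima} for the $1$-order case and transfers without change to the $0$-order setting precisely because transience makes $(\mathcal{F}_e,\mathcal{E})$ a Hilbert space, so the same Hilbert-space projection and Riesz-representation arguments apply.
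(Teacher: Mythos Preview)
Your proposal is correct and takes essentially the same approach as the paper: both treat this proposition as the $0$-order transcription of the standard potential-theoretic results in \cite{FOT}*{Chapter 2}, with the paper simply citing \cite{FOT}*{Theorem 2.1.5, Lemma 2.2.10} and noting that (iv) follows by a variational argument, while you have spelled out those arguments (closed convex projection in the Hilbert space $(\mathcal{F}_e,\mathcal{E})$, Euler--Lagrange for (ii), Riesz representation for (iii), Cauchy--Schwarz for (iv)) in detail.
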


The function $e_B$ and the measure $\mu_B$ are known as, respectively, the $0$-order \emph{equilibrium potential} and
\emph{equilibrium measure} of the set $B$ (with respect to $\mathcal{E}$).

\begin{proof}
The first two items are the 0-order version of \cite[Theorem 2.1.5]{FOT}, as explained on \cite[p.
74]{FOT}. Item (iii) is proved in conjunction with \cite[Lemma 2.2.10]{FOT}. The first two equalities in item (iv) follow
directly from (ii) and (iii), while the third equality can be obtained by the following argument. Since by (ii) and (iii), $e_B = U\mu_B$ and $\widetilde{e}_B=1$ q.e. on $B$, 
\begin{equation}
\langle \mathbbm{1}_B, \mu\rangle_X = \langle \widetilde{e_B}, \mu\rangle_X = \mathcal{E}(U\mu_B, U\mu). 
\end{equation}
Therefore by the Cauchy-Schwarz inequality
$$
\frac{\langle \mathbbm{1}_B,\mu \rangle_X^2}{\mathcal{E}(U\mu, U\mu)}= \frac{\mathcal{E}(U\mu_B, U\mu)^2}{\mathcal{E}(U\mu, U\mu)} \leq \mathcal{E}(U\mu_B, U\mu_B) = {\rm Cap}_\mathcal{E}(B),
$$
with equality holding if and only if $U\mu$ is a constant multiple of $U\mu_B$.
\end{proof}

\subsection{$\Gamma$-convergence and Mosco convergence}

In this subsection we collect some elementary notions of $\Gamma$-convergence and Mosco convergence; see
\cite{DalMaso,Mosco94} for more details. 

Let $\left((\mathcal{E}_N,\mathcal{F}_N)\right)_N$ be a sequence of
closed symmetric quadratic forms on $\mathcal{H}=L^2(X,m)$, where for each $N$, $\mathcal{F}_N = \{f\in \mathcal{H}: \mathcal{E}_N(f,f) <\infty\}$ denotes the natural domain of $\mathcal{E}_N$. Also we shall fix a sequence $(P_N)_N$ of orthogonal projections $P_N: \mathcal{H} \to
\mathcal{F}_N$. Finally, for each $f\in \mathcal{H}$, let $\mathcal{N}(f)$ be the collection of all open neighborhoods of $f$ with respect to the usual topology on $\mathcal{H}$. 

Define
\begin{eqnarray}
(\Gamma\text{-}\varliminf_{N\to\infty} \mathcal{E}_N)(f,f) &=& \sup_{U \in \mathcal{N}(f)}\varliminf_{N\to\infty}
\inf_{h\in U} \mathcal{E}_N(P_N h,P_N h),\label{eq:gammaliminf}\\
(\Gamma\text{-}\varlimsup_{N\to\infty} \mathcal{E}_N)(f,f) &=& \sup_{U \in \mathcal{N}(f)}\varlimsup_{N\to\infty}
\inf_{h\in U} \mathcal{E}_N(P_N h,P_N h).\label{eq:gammalimsup}
\end{eqnarray}
If the liminf
(\ref{eq:gammaliminf}) coincides with the limsup (\ref{eq:gammalimsup}) for all $f\in \mathcal{H}$, then we say
that the sequence $(\mathcal{E}_N)_N$ \emph{\textbf{$\Gamma$-converges}}, and denote the limit by $\displaystyle
\Gamma\text{-}\lim_{N\to\infty}\mathcal{E}_N$.

One may also consider $\Gamma$-convergence with respect to the \emph{weak} topology on $\mathcal{H}$. In particular put
\begin{eqnarray}
(\Gamma_w\text{-}\varliminf_{N\to\infty} \mathcal{E}_N)(f,f) &=& \sup_{U \in \mathcal{N}_w(f)}\varliminf_{N\to\infty} \inf_{h\in U} \mathcal{E}_N(P_N h,P_N
h),\label{eq:wgammaliminf}
\end{eqnarray}
where $\mathcal{N}_w(f)$ denotes the collection of all open neighborhoods of $f$ with respect to the weak topology on $\mathcal{H}$. If the weak liminf \ref{eq:wgammaliminf}) coincides with the limsup (\ref{eq:gammalimsup}), then we say that the sequence
$(\mathcal{E}_N)_N$ \emph{\textbf{converges in the sense of Mosco}}, and denote the limit by $\displaystyle
M\text{-}\lim_{N\to\infty} \mathcal{E}_N$.

The above notions of convergence can be alternatively characterized in terms of sequences in $\mathcal{H}$. It can be
shown (\emph{cf.} \cite[Ch. 8]{DalMaso}) that $\displaystyle\mathcal{E} = \Gamma\text{-}\lim_{N\to\infty}\mathcal{E}_N$ if and only if the following
two conditions hold: 
\begin{enumerate}[label={($\Gamma$\arabic*)}]
\item For every sequence $(u_N)_N$ in $\mathcal{H}$ converging to $u$, $\displaystyle \varliminf_{N\to\infty} \mathcal{E}_N(P_N u_N, P_N
u_N) \geq \mathcal{E}(Pu,Pu)$.
\item For every $u\in \mathcal{H}$, there is a sequence $(u_N)_N$ in $\mathcal{H}$ converging to $u$ such that
$\displaystyle \varlimsup_{N\to\infty} \mathcal{E}_N(P_N u_N,P_N u_N) \leq \mathcal{E}(Pu,Pu)$.
\end{enumerate}

Replace ($\Gamma$1) by the stronger condition
\begin{enumerate}[label={(w$\Gamma$\arabic*)}]
\item For every sequence $(u_N)_N$ in $\mathcal{H}$ converging \emph{weakly} to $u$, $\displaystyle \varliminf_{N\to\infty}
\mathcal{E}_N(P_N u_N, P_N u_N) \geq \mathcal{E}(Pu,Pu)$.
\end{enumerate}
Then $\displaystyle \mathcal{E} = M\text{-}\lim_{N\to\infty}\mathcal{E}_N$ if and only if (w$\Gamma$1) and
($\Gamma$2) hold. Here $P$ is the orthogonal projection onto $\mathcal{D}(\mathcal{E})$.

The relevance of Mosco convergence to our setting is due to the following fact.

\begin{proposition}
For each $\beta>0$, let $G^N_\beta$ and $G_\beta$ respectively denote the $\beta$-resolvent associated with the closed forms $\mathcal{E}_N$ and $\mathcal{E}$. Then the following are equivalent:
\begin{enumerate}[label={(\arabic*)}]
\item (Mosco convergence) $\displaystyle \mathcal{E} = M\text{-}\lim_{N\to\infty} \mathcal{E}_N$.
\item (Strong resolvent convergence in $L^2$) For every $\beta>0$ and every $f\in \mathcal{H}$, $\displaystyle \lim_{N\to\infty} G^N_\beta P_N f=G_\beta P f$ in $L^2$.
\end{enumerate}
\label{prop:MC}
\end{proposition}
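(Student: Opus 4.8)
The plan is to obtain both implications from the Dirichlet (variational) principle for the resolvent together with the fundamental theorem of $\Gamma$-convergence; this is essentially \cite{Mosco94}*{Theorem 2.4.1}, and I would recall the argument in the present notation. The two tools are: (a) for a nonnegative closed symmetric form $\mathcal{A}$ on $\mathcal{H}$ with resolvent $R_\beta$, the element $R_\beta g$ is the unique minimizer over $\mathcal{H}$ of $u\mapsto \mathcal{A}(u,u)+\beta\|u\|^2-2(g,u)$ (with $\mathcal{A}(u,u):=+\infty$ off the domain), with minimum value $-(g,R_\beta g)$; and (b) the dual Legendre identity
$$\mathcal{A}(u,u)+\beta\|u\|^2=\sup_{g\in\mathcal{H}}\bigl[\,2(g,u)-(g,R_\beta g)\,\bigr]\qquad(u\in\mathcal{H}).$$
Applied to $\mathcal{E}_N(P_N\cdot,P_N\cdot)$ and to $\mathcal{E}(P\cdot,P\cdot)$ these encode the forms entirely in terms of the (bounded) resolvents.

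For (1)$\Rightarrow$(2): fix $\beta>0$ and $f\in\mathcal{H}$, and put $\Phi_N(u):=\mathcal{E}_N(P_Nu,P_Nu)+\beta\|P_Nu\|^2-2(P_Nf,u)$ on $\mathcal{H}$, whose minimizers are realized by $v_N:=G^N_\beta P_Nf$, and $\Phi(u):=\mathcal{E}(Pu,Pu)+\beta\|Pu\|^2-2(Pf,u)$, minimized by $G_\beta Pf$. The perturbation $u\mapsto \beta\|P_Nu\|^2-2(P_Nf,u)$ converges (using $P_N\to P$ strongly) to $u\mapsto \beta\|Pu\|^2-2(Pf,u)$ in a way compatible with $\Gamma$-convergence and is weakly lower semicontinuous with the correct liminf behaviour along weakly convergent sequences, so Mosco convergence of $(\mathcal{E}_N)$ — i.e.\ strong $\Gamma$-convergence of $\mathcal{E}_N(P_N\cdot,P_N\cdot)$ to $\mathcal{E}(P\cdot,P\cdot)$ together with the weak $\Gamma$-liminf inequality (w$\Gamma$1) — upgrades to $\Phi_N\to\Phi$ strongly in the $\Gamma$-sense and $\Gamma_w\text{-}\varliminf\Phi_N\ge\Phi$. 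Since $\|v_N\|\le\|f\|/\beta$, the $v_N$ are weakly precompact; the weak $\Gamma$-liminf inequality gives $\Phi(v)\le\varliminf\Phi_N(v_N)$ for any weak cluster point $v$, and the strong recovery sequence at $u^\ast:=G_\beta Pf$ gives $\varlimsup\Phi_N(v_N)=\varlimsup\min\Phi_N\le\Phi(u^\ast)=\min\Phi$, forcing $v=u^\ast$ and $\Phi_N(v_N)\to\Phi(u^\ast)$. To upgrade weak to strong convergence, I would apply the weak $\Gamma$-liminf inequality termwise ($\varliminf\mathcal{E}_N(v_N,v_N)\ge\mathcal{E}(u^\ast,u^\ast)$ and $\varliminf\beta\|v_N\|^2\ge\beta\|u^\ast\|^2$) and use $(P_Nf,v_N)\to(f,u^\ast)$ together with the convergence $\Phi_N(v_N)\to\Phi(u^\ast)$ to conclude $\|v_N\|^2\to\|u^\ast\|^2$; combined with $v_N\rightharpoonup u^\ast$ this yields $v_N\to G_\beta Pf$ in $L^2$.

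For (2)$\Rightarrow$(1): I would verify (w$\Gamma$1) and ($\Gamma$2). For the recovery sequence ($\Gamma$2) it suffices, by density of $\{G_\beta g:g\in\mathcal{H}\}$ in $\mathcal{D}(\mathcal{E})$ for the $\mathcal{E}_\beta$-norm and a diagonal argument, to treat $u=G_\beta g$: take $u_N:=G^N_\beta P_Ng$, so $u_N\to u$ in $L^2$ by hypothesis, and from $\mathcal{E}_N(u_N,u_N)+\beta\|u_N\|^2=(P_Ng,u_N)\to(g,u)=\mathcal{E}(u,u)+\beta\|u\|^2$ and $\|u_N\|\to\|u\|$ one gets $\mathcal{E}_N(u_N,u_N)\to\mathcal{E}(u,u)$. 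For (w$\Gamma$1), let $u_N\rightharpoonup u$ and $v_N:=P_Nu_N$ (bounded, with $v_N\rightharpoonup Pu$); the Legendre identity gives $\mathcal{E}_N(v_N,v_N)+\beta\|v_N\|^2\ge 2(g,v_N)-(g,G^N_\beta g)$ for every $g$, and passing to $\varliminf$ (using $v_N\rightharpoonup Pu$ and $G^N_\beta g\to G_\beta g$ in $L^2$) and then taking $\sup_g$ yields $\varliminf\bigl[\mathcal{E}_N(v_N,v_N)+\beta\|v_N\|^2\bigr]\ge\mathcal{E}(Pu,Pu)+\beta\|Pu\|^2$; as $\sup_N\|v_N\|<\infty$, letting $\beta\downarrow0$ peels off the $\beta$-terms and leaves $\varliminf\mathcal{E}_N(v_N,v_N)\ge\mathcal{E}(Pu,Pu)$ (the case $Pu\notin\mathcal{D}(\mathcal{E})$ being covered by the same inequality with $\beta$ fixed).

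I expect the main obstacle to be organizational rather than conceptual: the careful bookkeeping with the projections $P_N$ and $P$ — in particular, ensuring $P_N\to P$ strongly in the relevant sense (in our application this is the $L^2$-convergence $\tilde P_Nf\to f$ on a dense set, a martingale-type convergence for the mean-value operators along the refining family of cubes) so that the perturbation terms converge as claimed — together with the delicate strong-versus-weak interplay needed to convert weak convergence of the minimizers into strong convergence. All of this is standard $\Gamma$-/Mosco-convergence machinery, collected in \cite{DalMaso} and \cite{Mosco94}.
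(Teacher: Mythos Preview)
Your proposal is correct and in fact goes well beyond what the paper provides: the paper's own ``proof'' of this proposition consists of a single sentence citing \cite{DalMaso}*{Theorem 13.6} and \cite{Mosco94}*{Theorem 2.4.1}, without reproducing any argument. You have sketched precisely the standard proof from those references---the variational characterization of the resolvent, the fundamental theorem of $\Gamma$-convergence for the minimizers, and the Legendre duality for the converse---so there is nothing to compare at the level of strategy.
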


\begin{proof}
See, for example, \cite[Theorem 13.6]{DalMaso} or \cite[Theorem 2.4.1]{Mosco94}. 
\end{proof}
 
Let us now specialize to the case where $(\mathcal{E}_N)_N$ is a sequence of regular Dirichlet forms.

\begin{lemma}
Let $((\mathcal{E}_N,\mathcal{F}_N))_N$ be a Mosco convergent sequence of regular Dirichlet forms on $\mathcal{H}$, and assume that the limit form can be extended to a regular Dirichlet form $(\mathcal{E},\mathcal{F})$ on $\mathcal{H}$.
Then for all $\beta>0$ and all $h\in \mathcal{F}$ with $h \geq 0$ $m$-a.e.,
\begin{equation}
\lim_{N\to\infty} \mathcal{E}_N\left(G^N_\beta P_N h, G^N_\beta P_N h\right) = \mathcal{E}(U_\beta
(h\cdot m), U_\beta (h \cdot m)).
\label{eq:B21}
\end{equation}
Furthermore, if all the $((\mathcal{E}_N,\mathcal{F}_N))_N$ and
$(\mathcal{E},\mathcal{F})$ are transient regular Dirichlet forms, then for all $h\in \mathcal{F}_e \cap L^1(X,m)$ with $h\geq 0$ $m$-a.e.,
\begin{equation}
\lim_{N\to\infty} \mathcal{E}_N\left(G^N P_N h
, G^N P_N h\right) = \mathcal{E}(U(h\cdot m), U(h\cdot m)).
\label{eq:B22}
\end{equation}
\label{lem:resolventconv}
\end{lemma}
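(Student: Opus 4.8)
\textbf{Proof plan for Lemma \ref{lem:resolventconv}.}

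The plan is to exploit the characterization of Mosco convergence in terms of resolvents (Proposition \ref{prop:MC}) together with the elementary fact that for a regular Dirichlet form, the $\beta$-resolvent applied to a nonnegative $h$ produces the $\beta$-potential $U_\beta(h\cdot m)$, which is the unique minimizer of a quadratic functional. First I would recall the variational identity: for $h\in\mathcal{F}$ with $h\geq 0$, the function $u_\beta := U_\beta(h\cdot m) = G_\beta(h\cdot m)$ satisfies $\mathcal{E}_\beta(u_\beta, v) = (h,v)_{L^2}$ for all $v\in\mathcal{F}$, where $\mathcal{E}_\beta := \mathcal{E} + \beta(\cdot,\cdot)_{L^2}$; and analogously $u_\beta^N := G_\beta^N P_N h$ satisfies the same relation with $\mathcal{E}_N$ in place of $\mathcal{E}$ and $P_N h$ in place of $h$. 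Testing against $u_\beta$ itself gives $\mathcal{E}_{N,\beta}(u_\beta^N, u_\beta^N) = (P_N h, u_\beta^N)_{L^2}$, and similarly $\mathcal{E}_\beta(u_\beta,u_\beta) = (h,u_\beta)_{L^2}$.

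The core of the argument is then: by Proposition \ref{prop:MC}, $u_\beta^N \to u_\beta$ strongly in $L^2$, and since $P_N h \to h$ strongly in $L^2$ (as $P_N$ are the natural projections and $h\in\mathcal{F}\subset\mathcal{H}$; more carefully one uses $P_N h\to Ph=h$), we get $\mathcal{E}_{N,\beta}(u_\beta^N,u_\beta^N) = (P_N h, u_\beta^N)_{L^2} \to (h,u_\beta)_{L^2} = \mathcal{E}_\beta(u_\beta,u_\beta)$. Next I would upgrade this to convergence of the \emph{energy} part alone: since $\beta\|u_\beta^N\|_{L^2}^2 \to \beta\|u_\beta\|_{L^2}^2$ by the strong $L^2$-convergence, subtracting yields $\mathcal{E}_N(u_\beta^N,u_\beta^N) \to \mathcal{E}(u_\beta,u_\beta)$, which is precisely \eqref{eq:B21}. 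One should also note that $U_\beta(h\cdot m)$ here coincides with $U_\beta h$ in the notation of the Yosida approximation (Proposition \ref{prop:Yosida}) — this is just bookkeeping between the measure $h\cdot m$ and the function $h$.

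For the transient statement \eqref{eq:B22}, I would argue by letting $\beta\downarrow 0$ in \eqref{eq:B21}, using that for transient regular Dirichlet forms the $0$-order potential $U(h\cdot m)$ is obtained as the increasing $\mathcal{E}$-limit of $U_\beta(h\cdot m)$ as $\beta\downarrow 0$ whenever $h\in\mathcal{F}_e\cap L^1(X,m)$, $h\geq 0$ (so that $h\cdot m\in S_0^{(0)}$); cf. the monotone convergence of $\beta$-potentials to $0$-potentials in \cite{FOT}*{Chapter 1}. The delicate point — and the one I expect to be the main obstacle — is interchanging the two limits $N\to\infty$ and $\beta\to 0$. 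To control this I would establish a uniform-in-$N$ bound $\mathcal{E}_N(u_\beta^N, u_\beta^N) \leq \mathcal{E}_N(u_{\beta'}^N, u_{\beta'}^N) \leq C$ for $\beta'\leq\beta$ (monotonicity of $\beta$-potentials in $\beta$ holds at the discrete level too, since each $\mathcal{E}_N$ is a genuine Dirichlet form), together with the $L^1$-control $\|h\|_{L^1}$ that bounds $(h, U_\beta(h\cdot m))$ via $\widetilde{U_\beta(h\cdot m)}\leq \|h\|_\infty$-type estimates, or more robustly via the resolvent contraction on $L^1$. Given such equi-boundedness, a diagonal argument combined with the strong resolvent convergence in the energy norm (which is what Proposition \ref{prop:MC} plus the above delivers) lets one pass to the limit and identify $\lim_N \mathcal{E}_N(G^N P_N h, G^N P_N h) = \mathcal{E}(U(h\cdot m), U(h\cdot m))$. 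I would finish by remarking that this last identity is exactly the "strong resolvent convergence in the energy norm" invoked in the proof of Theorem \ref{thm:Greenform}.
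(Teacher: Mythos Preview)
Your approach is correct and, for \eqref{eq:B21}, actually more direct than the paper's. The paper expands the quadratic $\mathcal{E}_N\bigl(G^N_\beta P_N h - P_N U_\beta(h\cdot m),\, G^N_\beta P_N h - P_N U_\beta(h\cdot m)\bigr)$ and bounds the cross terms using the resolvent identity together with \emph{both} Mosco conditions (w$\Gamma$1) and ($\Gamma$2), eventually showing this difference tends to zero. Your route bypasses all of that: from the single identity $\mathcal{E}_{N,\beta}(u_\beta^N,u_\beta^N)=(P_N h,u_\beta^N)_{L^2}$ and strong $L^2$-convergence $u_\beta^N\to u_\beta$ (Proposition~\ref{prop:MC}), you get convergence of $\mathcal{E}_{N,\beta}$ immediately, then subtract the $\beta\|\cdot\|^2$ term. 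One simplification you can make: your worry about $P_N h\to h$ is unnecessary, since $u_\beta^N\in\mathcal{F}_N$ gives $(P_N h,u_\beta^N)=(h,P_N u_\beta^N)=(h,u_\beta^N)$ directly by self-adjointness and idempotence of $P_N$, and then $(h,u_\beta^N)\to(h,u_\beta)$ follows from the $L^2$-convergence alone.

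For \eqref{eq:B22}, the two treatments are at the same level of rigor. The paper cites \cite{FOT}*{Lemma 2.2.11} for the $\mathcal{E}$-convergence $U_\beta(h\cdot m)\to U(h\cdot m)$ as $\beta\downarrow 0$ (and the analogous statement for each $\mathcal{E}_N$), then dismisses the interchange of limits as ``a standard limiting argument.'' You correctly flag the interchange as the delicate point and sketch a monotonicity/equi-boundedness strategy; note that what you really need is the uniform bound $\mathcal{E}_N(u_\beta^N,u_\beta^N)\leq (P_N h, G^N P_N h)_{L^2}$, which holds for each $N$ by transience but whose uniformity in $N$ is exactly what the double limit must deliver. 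Neither proof spells this out fully.
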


\begin{proof}
We have the identities
\begin{eqnarray}
\label{eq:greenid1}
\mathcal{E}_N(G^N_\beta P_N f, P_N h)&=& (P_N f,P_N h)_{L^2}- \beta (G^N_\beta P_N f, P_N
h)_{L^2},
\\
\label{eq:greenid2}\mathcal{E}(G_\beta f,  h)&=& (f, h)_{L^2}- \beta (G_\beta  f, 
h)_{L^2}
\end{eqnarray}
for all $f,h\in \mathcal{F}$.
By Proposition \ref{prop:MC}, $G_\beta^N P_N h \to G_\beta Ph$ in $L^2$. Also $P_N h \to Ph = h$ in $L^2$. So taking the limit $N\to\infty$ on (\ref{eq:greenid1}) we get
\begin{eqnarray*}
\lim_{N\to\infty}\mathcal{E}_N(G_\beta^N P_N h, G_\beta^N P_N h) &=& \lim_{N\to\infty} \left[ (P_N h, G_\beta^N P_N h)_{L^2} -\beta(G_\beta^N P_N h, G_\beta^N P_N h)_{L^2}\right]\\
&=& (h, G_\beta h)_{L^2} - \beta(G_\beta h, G_\beta h)_{L^2}\\
&=& \mathcal{E}(G_\beta h, G_\beta h) ~=~ \mathcal{E}(U_\beta(h\cdot m), U_\beta(h\cdot m))
\end{eqnarray*}
using (\ref{eq:greenid2}) in the end. In the transient case, we have the $0$-order version of (\ref{eq:B21}), which is (\ref{eq:B22}).
\end{proof}

\end{appendices}

\bibliographystyle{amsplain}

\vspace{20pt}

\begin{small}
\begin{flushleft}
\textbf{Joe P. Chen}\\
\textsc{Departments of Mathematics, University of Connecticut, Storrs, CT 06269, USA.}\\
\textsc{E-mail:} \href{mailto:joe.p.chen@uconn.edu}{joe.p.chen@uconn.edu}

\vspace{5pt}

\textbf{Baris Evren Ugurcan}\\
\textsc{Department of Mathematics, The University of Western Ontario, London, Ontario, Canada, N6A 5B7.}\\
\textsc{E-mail:} \href{mailto:beu4@cornell.edu}{beu4@cornell.edu}
\end{flushleft}
\end{small}

\end{document}